\DeclareMathAlphabet\mathbfcal{OMS}{cmsy}{b}{n} %added
\newcolumntype{L}{>{$}l<{$}} %added
\newcolumntype{D}{>{$\displaystyle}l<{$}} %added
\newcolumntype{C}{>{$}c<{$}} %added
\setlist[itemize]{leftmargin=1.5em}
\definecolor{green}{rgb}{0,0.8,0} % Redefines the color green.
\definecolor{greenr}{rgb}{0.6,0.8,0}
\newtheorem{theorem}{Theorem}[section]
\newtheorem{corollary}[theorem]{Corollary}
\newtheorem{lemma}[theorem]{Lemma}
\newtheorem{proposition}[theorem]{Proposition}
\newtheorem{definition}[theorem]{Definition}
\theoremstyle{remark}
\newtheorem{remark}[theorem]{Remark}
\numberwithin{equation}{section}
\newcommand{\nnrm}[1]{{\vert\kern-0.25ex\vert\kern-0.25ex\vert #1 
		\vert\kern-0.25ex\vert\kern-0.25ex\vert}}
\def\d{\,\mathrm{d}}
\newcommand{\Emod}{\calE^\text{mod}}
\newcommand{\calA}{\mathcal A}
\newcommand{\calD}{\mathcal D}
\newcommand{\calE}{\mathcal E}
\newcommand{\calF}{\mathcal F}
\newcommand{\calJ}{\mathcal J}
\newcommand{\calK}{\mathcal K}
\newcommand{\calL}{\mathcal L}
\newcommand{\calM}{\mathcal M}
\newcommand{\calR}{\mathcal R}
\newcommand{\calX}{\mathcal X}
\newcommand{\calY}{\mathcal Y}
\newcommand{\frkm}{1}
\newcommand{\R}{\mathbb R}
\newcommand{\pa}{\partial}
\newcommand{\e}{\ep}
\newcommand{\lt}{\left}
\newcommand{\rt}{\right}
\newcommand{\bq}{\begin{equation}}
	\newcommand{\eq}{\end{equation}}
\newcommand{\Om}{\Omega}
\newcommand{\levert}{\left\vert}
\newcommand{\rivert}{\right \vert}
\newcommand{\levertl}{\left\Vert}
\newcommand{\rivertl}{\right \Vert}
\newcommand{\eqh}[1]{\begin{equation*}
		\begin{split}
			#1
		\end{split}
\end{equation*}}
\newcommand{\equ}[1]{\begin{equation}
		\begin{split}
			#1
		\end{split}
\end{equation}}
\newcommand{\equh}[1]{\begin{equation*}
		\begin{split}
			#1
		\end{split}
\end{equation*}}
\newcommand{\vr}{\rho}
\newcommand{\ep}{\varepsilon}
\newcommand{\dx}{{\rm{d}}x}
\newcommand{\dy}{{\rm{d}}y}
\newcommand{\dt}{{\rm{d}}t}
\newcommand{\dxdt}{\dx\,\dt}
\newcommand{\px}{\partial_x}
\newcommand{\pt}{\partial_t}
\newcommand{\lr}[1]{\left( #1 \right)}
\newcommand{\intO}[1]{\int_\R{#1}\, \dx}
\newcommand{\intM}[1]{\int_{\Omega_\ep}{#1}\, \dx}
\newcommand{\iintO}[1]{\int\!\!\!\int_{\R\times \R}{#1}\, \dy\,\dx}
\newcommand{\iintM}[1]{\int\!\!\!\int_{\Omega_\ep\times\Omega_\ep}{#1}\, \dy\,\dx}
\newcommand{\intTO}[1]{\int_0^T\!\!\!\int_\R{#1}\, \dx\, \dt}
\newcommand{\intTM}[1]{\int_0^T\!\!\!\int_{\Omega_\ep}{#1}\, \dx\, \dt}
\newcommand{\Dt}{\frac{ \rm d}{{\rm d}t}}
\newcommand{\intOy}[1]{\int_\R{#1}\, \dy}
\newcommand{\phie}{{\phi_\ep}}
\newcommand{\tw}{{\widetilde{W}}}
\newcommand{\gmhalf}{{\gamma-\frac{1}{2}}}
\newcommand{\ds}{\displaystyle}
\newcommand{\tvr}{\widetilde{\vr}}
\begin{document}
	
	\title{Existence of weak solutions and long-time asymptotics for hydrodynamic model of swarming }%: Title of the article
	
	\author{Nilasis Chaudhuri}
	\address{Faculty of Mathematics, Informatics, and Mechanics, University of Warsaw, ul. Banacha 2, Warsaw 02-097, Poland}
	\email{nchaudhuri@mimuw.edu.pl}
	
	\author{Young-Pil Choi}
	\address{Department of Mathematics, Yonsei University, 50 Yonsei-Ro, Seodaemun-Gu, Seoul 03722, Republic of Korea}
	\email{ypchoi@yonsei.ac.kr}
	
	\author{Oliver Tse}
	\address{Department of Mathematics and Computer Science, Eindhoven University of Technology, 5600 MB Eindhoven, The Netherlands}
	\email{o.t.c.tse@tue.nl}

	\author{Ewelina Zatorska}
	\address{Mathematics Institute, Zeeman Building, University of Warwick. Coventry CV4 7AL, United Kingdom}
	\email{ewelina.zatorska@warwick.ac.uk}

	\date{\today}
	
	% \thanks{The work of N.C. and E.Z. is supported by the EPSRC Early Career Fellowship no. EP/V000586/1.}%
	%\subjclass{}%
	%\keywords{}%
	
	%\date{\today}%
	%\dedicatory{}%
	%\commby{}%
	% ----------------------------------------------------------------
	
	\maketitle
	
	% ----------------------------------------------------------------
	
	\begin{abstract}
		We consider a one-dimensional hydrodynamic model featuring nonlocal attraction-repulsion interactions and singular velocity alignment. We introduce a two-velocity reformulation and the corresponding energy-type inequality, in the spirit of the Bresch-Desjardins estimate. We identify a dependence between the communication weight and interaction kernel and between the pressure and viscosity term allowing for this inequality to be uniform in time. It is then used to study long-time asymptotics of solutions.
		
	\end{abstract}
	
	%\tableofcontents

	%%%%%%%%%%%%%%%%%%%%%%%%%%%%%%%%%%%%%%%%%%%%%%%%%%
	%
	%
	%
	% \section{Introduction}
	%
	%
	%%%%%%%%%%%%%%%%%%%%%%%%%%%%%%%%%%%%%%%%%%%%%%%%%%
	
	\section{Introduction}
	The purpose of this manuscript is to study the existence of solutions and the long-time behavior of the one-dimensional hydrodynamic model of collective motion with unknown density $\rho$ and velocity $u$ satisfying the conservation of mass and momentum balance equations
	\begin{subequations}\label{main_eq}
		\begin{align}
			&\pa_t \rho + \pa_x (\rho u) = 0, \quad x \in \R, \quad t > 0, \label{main_eq1}\\
			&\pa_t (\rho u) + \pa_x (\rho u^2) = - \rho \pa_x \delta\calF(\rho) + \pa_x (\mu(\rho)\pa_x u) + \rho [\phi *, u]\rho - \tau\rho u,\label{main_eq2}
		\end{align}
	\end{subequations}
 henceforth, simply called the Navier-Stokes system. In the model above, $\rho\mapsto\calF(\rho)$ is the driving functional with contribution due to the internal energy and the nonlocal interaction parts, i.e.
	\[
	\calF(\rho) = \int_\R \int_0^{\rho(x)} \varphi(r)\, {\rm d} r\,\dx + \frac12 \int_\R \rho(x) W * \rho(x)\,\dx,
	\]
	and $\delta \calF$ is its $L^2$-variational derivative given by
	\equ{\label{dF}
	\delta \calF(\rho) = \varphi(\rho) + W * \rho.
	}
Here, we consider the explicit form of $\varphi:\R_+ \to \R_+$, $\R_+\coloneq[0,\infty)$, given by
 \equ{\varphi(\rho)=\frac{\gamma}{\gamma-1} \rho^{\gamma-1},\quad \gamma>1,}
which corresponds to the barotropic pressure law $\rho^\gamma$. The interaction potential $W:\R \to \R$ is assumed to be bounded from below---the precise assumptions on its regularity and growth will be given in Assumption~$(\mathcal{A}_W)$ in the next section.
 
 The second term on the right-hand side of  \eqref{main_eq2} is the viscosity dependent stress tensor with the viscosity coefficient $\mu=\mu(\rho)$. In our paper, we consider $\mu$ and $\varphi$ linked by the algebraic relation
\equ{\label{1.4}
    \mu(\rho) \coloneq \rho^2 \varphi'(\rho)=\gamma\rho^\gamma.
}
In particular, $\mu(\rho)=0$ when $\rho=0$. The third term is the nonlinear and nonlocal velocity alignment force with communication weight $\phi:\R \to \R_+$. Here $[\cdot,\cdot]$ stands for the commutator operator that can be equivalently expressed as
 \[
    \rho [\phi *, u]\rho \coloneq \rho (\phi*(u\rho) - u \phi*\rho) =   \rho(x) \int_\R \phi(x-y)(u(y) - u(x))\rho(y)\d y.
 \]
 Typically, $\phi$ is chosen as a radially decreasing function so that closer particles have a stronger tendency to be aligned than particles further away. Finally, the last term in the momentum equation \eqref{main_eq2} is the linear velocity damping with a parameter $\tau \geq 0$.

Simply put, the construction of weak solutions (cf.\ Definition~\ref{Def:1}) to the  system~\eqref{main_eq} is obtained as an accumulation point of a sequence of solutions $(\rho^\ep,u^\ep)$ to an approximate system with parameter $\ep>0$ (cf.\ Section~\ref{Sec:existence}), which hinges on providing relevant a-priori estimates. In this work, this is achieved by establishing a functional inequality for an appropriate linear combination of several energy functionals with 
% This strategy relies on the control of an appropriate combination of several energy functionals---
one of them being the \emph{total energy}
\[
	\calE(\rho,u) \coloneq  \frac12\intO{\rho u^2} + \calF(\rho).
\]
Along sufficiently regular solutions of \eqref{main_eq}, the total energy is known to dissipate in time, i.e.
\[
    -\Dt\calE(\rho,u)
		= \int_\R \mu(\rho)|\pa_x u|^2\,\dx + \tau\intO{\rho u^2} + \calK_\phi(\rho,u),
\]
where the right-hand side is nonnegative and
\[
    \calK_\phi(\rho,u) \coloneq \frac12\iintO{ \phi(x-y)|u(x) - u(y)|^2 \rho(x)\rho(y)}.
\]
However, the control provided by the total energy alone does not provide sufficient control to characterize the accumulation points of the approximating sequence $(\rho^\ep,u^\ep)$. For this reason, we provide additional control of the so-called \emph{modulated energy}
\[
    \Emod(\rho,u) \coloneq \frac12\intO{  \rho |u + \pa_x \delta \calF(\rho)|^2 }+ \calF(\rho),
\]
which is a Bresch-Dejardins (BD) type energy \cite{BD03, BD04, BD07} that incorporates modulation of the nonlocal forces. Here, the quantity $w = u + \pa_x \delta \calF(\rho)$ can be seen as an augmented velocity.

In Section~\ref{Sec:apriori}, we formally show that for sufficiently smooth solutions to the Navier-Stokes system \eqref{main_eq}, the functional
\begin{align}\nonumber%\label{eq:J}
    -\infty<\calJ_{\tau,\lambda}(\rho,u) \coloneq \Emod(\rho,u) + \tau\calF(\rho) + \lambda\calE(\rho,u),
\end{align}
for sufficiently large $\lambda\gg 1$,
satisfies the BD-type differential inequality
\begin{align}\label{eq:diff-inq}
    -\Dt\calJ_{\tau,\lambda}(\rho,u) \ge  \calD_{\tau,\lambda}(\rho,u),
\end{align}
where the dissipation---the right-hand side---is bounded from below by
\begin{align}\nonumber%\label{eq:D}
    \calD_{\tau,\lambda}(\rho,u) \coloneq \frac12\intO{ \rho |\pa_x \delta \calF(\rho)|^2} + \ell_{\tau,\lambda}\intO{\rho|u|^2} + \lambda \intO{ \mu(\rho)|\pa_x u|^2} + c_\lambda\calK_\phi(\rho,u),
\end{align}
with appropriate constants $-\infty <\underline{\ell}\le \ell_{\tau,\lambda} $ and $c_\lambda>0$.

As it turns out, solutions to the approximate system satisfy a similar differential inequality (cf.\ Section~\ref{Sec:existence}), allowing us to deduce $\ep$-independent bounds in the form
\[
    \sup_{\ep>0}\left\{\sup_{t\in[0,T]} \calJ_{\tau,\lambda,\ep}(\rho^\ep(t),u^\ep(t)) + \int_0^T \calD_{\tau,\lambda,\ep}(\rho^\ep,u^\ep)\, \dt\right\} <+\infty.
\]
Together with the Mellet-Vasseur type estimate \cite{MV07} derived in Section~\ref{sec:compactness}, the uniform-in-$\ep$ bound above provides sufficient control to (a) deduce the integrated version of \eqref{eq:diff-inq}, i.e.
\begin{align}\label{eq:int-inq}
    \calJ_{\tau,\lambda}(\rho(t),u(t)) + \int_0^T \calD_{\tau,\lambda}(\rho,u)\, \dt \le \calJ_{\tau,\lambda}(\rho_0,u_0),
\end{align}
for the accumulation points $(\rho,u)$ of $(\rho^\ep,u^\ep)$, and (b) characterize these accumulation points as weak solutions of the Navier-Stokes system \eqref{main_eq} according to Definition~\ref{Def:1} below (cf.\ Theorem~\ref{Th:1}).

In the scenario when either $\tau>0$ or $\phi$ is uniformly bounded from below, i.e.\ $\phi\ge \underline{\phi}$ for some $\underline{\phi}>0$, the constant $\ell_{\tau,\lambda}$ can be chosen to be stricly positive. In particular, the dissipation $\calD_{\tau,\lambda}\ge 0$ is non-negative, and the BD-type integral inequality \eqref{eq:int-inq} can be used to deduce the long-time behaviour of the accumulation points $(\rho,u)$. Explicitly, we show that
\[
    \intO{ \rho\, u^2} \to 0 \qquad \text{and} \qquad  \intO{\rho \,(\pa_x \delta \calF(\rho))^2} \to 0\qquad\text{as $t\to \infty$}.
\]
If $W$ is additionally bounded and Lipschitz continuous, we further prove the existence of some $\rho_\infty\in H^1(\R)$ (cf.\ Theorem~\ref{Th:2}) such that for some (not relabelled) subsequence
\[
    \rho(t)\to\rho_\infty\quad  {\rm strongly\  in } \ L^{p}(\R), \ p\in[1,\infty),
\]
where the limit point $\rho_\infty$ satisfies
\[
    \int_\R \rho_\infty(\px \delta\calF(\rho_\infty))^2 \,\dx = 0\quad\Longleftrightarrow\quad \px\varphi(\rho_\infty) + \px W\ast \rho_\infty = 0\quad\text{on $\text{supp}(\rho_\infty)$}.
\]
In particular, this implies that the limit profile $\rho_\infty$ is a minimizer of the driving functional $\calF$.

\subsubsection*{Relation to previous work}

The pressureless Euler system, incorporating nonlocal interactions modelled by the interaction potential $W$ and a linear damping term (corresponding to constant alignment $\phi=1$), has been previously examined in \cite{CCZ}. In that work, the authors established the global existence of classical solutions over time and analyzed their long-time behaviour, contingent upon initial data falling below a specific threshold.  
When the interaction potential is given as the Riesz one, the rigorous derivation of the pressureless Euler system from its microscopic description was investigated in \cite{Ser20}. We also refer to \cite{KT15} for the study of the long-time asymptotics of solutions to the Euler system with pressure using relative entropy methods and to \cite{CCT19} using second-order Wasserstein calculus.
 
In the case $W, \mu \equiv 0$, the system \eqref{main_eq} becomes the so-called {\it Euler-alignment system}, also known as a macroscopic description of the celebrated Cucker-Smale model \cite{CS1,CS2}. The global existence of regular solutions and their long-time asymptotics were well studied in \cite{CJ24, HT08, ST1, ST2, ST3} under the smallness and smoothness assumptions on the initial data. The critical threshold phenomena in the pressureless Euler-alignment model were analysed in \cite{TT, CCTT} and more recently in \cite{BLT}. The rigorous derivation from its kinetic formulation was discussed in \cite{CCJ21, FK19} and \cite{CH24, CK23} in the case of regular and singular communication weights, respectively. In a recent work \cite{CC21}, the rigorous derivation of the pressureless Euler-alignment model from the Cucker-Smale model by means of mean-field limits was established.  We refer to \cite{CCP, CHL17, MMPZ, Shv21, Shvpre} and references therein for a general survey and recent development of swarming models.

 An intriguing question arising from these studies concerns the existence and behaviour of solutions beyond the blow-up time.  For the Euler-alignment model, the existence and uniqueness of these solutions have been explored through the sticky particles approximation in \cite{LT}. On the other hand, for the pressureless Euler with nonlocal interaction forces, the equivalence between Lagrangian and entropy solutions existing globally in time was shown in the recent work of Carrillo and Galtung \cite{CG23}. 
The motivation for the present work was similar. However, instead of delving into the different notions of measure-valued solutions, we focus on the global-in-time existence of weak solutions and their long-time asymptotics for a certain approximation of the original system from \cite{CCZ}. The discussion of the approximation with viscosity and pressure, for instance, can be found in \cite{CWKZ2}, where a relative entropy inequality was derived akin to the earlier work of Haspot \cite{Haspot1D}.

Despite the imposition of the relation between the viscosity coefficient and adiabatic exponent of pressure as in \eqref{1.4},  the authors of \cite{CWKZ2}  were unable to derive global estimates over time.  The equation \eqref{1.4} proved crucial for conducting the pressureless and inviscid limit simultaneously, and to arrive at the Euler system from \cite{CCZ}. Performing these limit passages independently requires using compensated compactness techniques of Chen and Perepelitsa \cite{CP1,CP2} and is investigated in the forthcoming work \cite{CCYZ}.

The key observation of our paper is that a suitable global in-time estimate holds provided pressure and viscosity are related by \eqref{1.4} and that a similar relation is imposed between their non-local counterparts: the attraction-repulsion potential $W$ and the communication weight $\phi$. In a sense, we further explore the two-velocity structure of hydrodynamic models, inspired by Bresch and Desjardins \cite{BD03}, but our augmented velocity $w$ also encompasses the gradient of the nonlocal function of density, i.e. $w=u+ \pa_x \delta \calF(\rho)$.
The analogy between nonlocal alignment and viscous dissipation was recognized and discussed earlier, as detailed in \cite[Chapter 5]{MMPZ}. We also draw attention to the works of Do et al.  \cite{Detal} and the series by Shvydkoy and Tadmor \cite{ST1, ST2, ST3}, with various results on fractional Euler alignment system in the 1D torus, exploiting the extra conservation law for the so-called {\it active potential}, see also \cite{CDNP20, CDS20}. A similar approach involving the utilization of an additional conservation law for artificial momentum was applied by Haspot and Zatorska in  \cite{HaZa} to perform the pressureless limit of the Navier-Stokes system, and by Constantin et.al. \cite{CDNP20, CDS20} to prove the existence of solutions for a general class of density-degenerate viscous models.

It is important to note that all the results mentioned above pertain to the one-dimensional spatial domain. For insights into the long-term behaviour of solutions in a general multi-dimensional hydrodynamic model of collective motion (with constant viscosity), we refer to \cite{CWKZ1}. 
Additionally, Carrillo and Shu recently investigated the pressureless Euler-Poisson system with quadratic confinement in a spherically symmetric multi-dimensional context \cite{CS23}; we refer to this paper for an up-to-date overview of results on that system in the context of continuous collective behaviour models. 
%Also, the Euler-Poisson system with quadratic confinement was recently considered in a spherically symmetric multi-dimensional setting by Carrillo and Shu \cite{CS23}, and we refer to this paper for an up-to-date overview of results on that system in the context of continuous collective behaviour models.

The technique of proving the existence of solutions for our system is hugely inspired by the papers \cite{HaZa, Jiu, LLX}, which in turn rely on the a-priori estimates derived in the earlier works of Bresch and Desjardins \cite{BD03, BD04, BD07} and of Mellet and Vasseur \cite{MV07}. The existence of solutions to the multi-dimensional compressible Navier-Stokes equations with density-dependent viscosities satisfying these a-priori estimates was proven in the seminal work of Vasseur and Yu \cite{VY16}, see also \cite{LX15}. The construction of the approximate solution in a one-dimensional setting is considerably less complex: we adapt the iterative scheme developed by Constantin et al.  \cite{CDS20}, along with the construction of approximate initial data by Chen et al.  \cite{CHWY2023} to the nonlocal case.

\subsubsection*{Outline of the paper} Section \ref{sec:main} is devoted to stating our main results. We then present the main a-priori estimates for regular solutions of system \eqref{main_eq} in Section \ref{Sec:apriori}. In particular, we show that the modulated energy estimate is satisfied uniformly in time. Then, in Section \ref{Sec:existence}, we outline the existence proof of regular approximate solutions, with some details postponed to Appendix \ref{AppendixA} and \ref{app:initial}. Two subsequent sections are devoted to the recovery of the original system and the limit passage in the energy inequalities. Long-time asymptotics of solutions stated in our main result Theorem~\ref{Th:2} is proven in Section \ref{Sec:lt}.

\section{Main results}\label{sec:main}
 We supplement system \eqref{main_eq} with the initial data
	\equ{\label{initiald}
		(\vr(t,\cdot), u(t,\cdot))|_{t=0} = ( \vr_0, u_0),}
	for which we assume that
	\begin{equation}
		\begin{gathered}
			\vr_0\geq 0, \quad \vr_0\in L^1(\R)\cap L^\infty(\R), \quad 
			\partial_x(\vr_0^{\gamma-\frac{1}{2}})\in L^2(\R),\quad \vr_0 u_0^2\in L^1(\R)  \\
			|x|^{\kappa+2}\vr_0\in L^1(\R),\quad \vr_0|u_0|^{2+\kappa}\in L^1(\R),\qquad 0<\kappa\leq\min\left\{2\gamma-1,\frac2\gamma\right\}.
		\end{gathered}
		\label{ini}
	\end{equation}
 Since the mass of $\rho_0$ will be preserved in time, we can consider w.l.o.g.\ that $\intO{\vr_0(x)}=1$.
	% We denote the mass of $\rho_0$ by $\frkm\coloneq\intO{\vr_0(x)}$.

 Throughout this manuscript, we assume that the interaction potential $W$ and the communication weight $\phi$ satisfy the following conditions $(\mathcal{A})$:
\begin{itemize}
\item[$(\mathcal{A}_W)$] $W$ is bounded from below by some constant, and further it has a form of 
\[
W(x) = - |x| + \tw(x),
\]
where $\tw: \R \to \R$ is symmetric and satisfies
\begin{itemize}
\item[(i)] $\tw \in C(\R) \cap C^2(\R \backslash \{0\})$,  
\item[(ii)] $|\partial_x \tw(x)|\leq l_W(1+|x|)$ for all $x \in \R$ for some constant $l_W>0$, and
\item[(iii)] there exists some positive constants $c_{\phi,W}$ and $c_W$ such that
\[
(\pa_{xx}\tw)^- \leq c_{\phi, W}\phi\quad \mbox{for } x\neq 0\quad \mbox{and} \quad 
  (\pa_{xx}\tw)^+ \leq c_W \quad \mbox{for all } x \in \R,
\]
where $(f)^\pm=\max\{0,\pm f\}$ denote the positive and negative parts of a given function $f$.
\end{itemize}
\smallskip
\item[$(\mathcal{A}_\phi)$] $\phi \in C(\R \backslash\{0\})$ and for any $R>0$ 
\[
\phi \mathbf{1}_{B(0,R)} \in L^{\frac\gamma{\gamma-1}}(\R) \quad \mbox{and} \quad \phi \mathbf{1}_{\R \backslash B(0,R)} \in L^\infty(\R).
\]
\end{itemize}

\begin{remark} 
\begin{enumerate}
    \item[(a)] The interaction potential $W$ given as a sum of the Coulomb potential and quadratic confinement, i.e.
	\equ{\nonumber%\label{def:W}
		W(x)= - |x|+\frac{|x|^2}{2},}
clear satisfies assumption $(\mathcal{A}_W)$. 
    \item[(b)] If the communication weight $\phi$ and the interaction potential $W$ are given as 
    \[
    \phi(x) = \frac1{|x|^a} \quad \mbox{with}\;\;  a < \frac{\gamma-1}\gamma,\quad W = - |x| - |x|^{2-a},
    \]
    then they satisfy conditions $(\mathcal{A})$. Indeed, in this case,
    \[
        (\pa_{xx} \tw)^- = (2-a)(1-a) |x|^{-a} = (2-a)(1-a) \phi(x),\qquad (\pa_{xx} \tw)^+ = 0.
    \]
    \item[(c)] We deduce from Assumption ($\calA2$) the following lower bound on the interaction energy 
    \[
        \intO{\rho\,W\ast\rho} \ge c_0 - c_1\intO{|x|^2\rho},
    \]
    for some constants $c_0,c_1>0$, depending only on $\tw(0)$ and $l_W$.
\end{enumerate}
\end{remark}

Let us now define the notion of solutions to the system \eqref{main_eq}

 	\begin{definition}\label{Def:1} Let $T>0$ be fixed but arbitrary.
		The pair of functions $(\vr,\sqrt{\vr}u)$ is called a global-in-time weak solution to system \eqref{main_eq} with initial data \eqref{initiald} satisfying \eqref{ini} if
		\begin{equation}\nonumber %\label{weak_vr_u}
			\begin{gathered}
%			&\limsup_{|x|\to+\infty}|\vr(t,x) u(t,x)|=0,\quad for\ a.a. \ t\in(0,T),\\
				\vr\in C_{w} ([0,T]; L^\gamma(\R)), \qquad \vr\in L^\infty(0,T; L^1\cap L^\infty(\R)), \\
				\sqrt{\vr} u \in L^\infty(0,T;L^2(\R)),\qquad \partial_x (\vr^{\gamma-\frac{1}{2}})\in L^\infty(0,T; L^2(\R)).
				%& \vr \in L^\infty(0,T;L^\infty(\R)).
			\end{gathered}       
		\end{equation}
		Moreover, the following weak formulations are satisfied.
  \begin{itemize}
    \item[(i)] The continuity equation holds 
		\begin{equation}
			\int_{\R}\vr\psi(t_2)\,\dx-\int_{\R}\vr\psi(t_1)\,\dx=
			\int^{t_2}_{t_1}\!\!\!\int_{\R}(\vr\partial_t\psi+\vr u\partial_x\psi)\,\dxdt
			\label{2.3}
		\end{equation}
		for any $0\leq t_1\leq t_2\leq T$ and any $\psi\in C^1_c( [0,T]\times \R)$.
    \item[(ii)] The momentum equation holds
		\equ{
&\int_{\R}\vr_0u_0\psi(0)\,\dx+\intTO{\lr{\vr u\pt\psi+\vr u^2\px\psi}}{-\langle\mu(\vr)\px u,\px\psi\rangle}\\
			&\hspace{6em}=\intTO{\rho \pa_x \delta\calF(\rho) \psi}-\intTO{\rho [\phi *, u]\rho\psi}+\tau\intTO{\rho u\psi},
			\label{2.4}
		}
		for any $\psi\in C^{\infty}_{c}([0,T)\times\R)$, where the diffusion term is defined as follows:
		\equ{\nonumber%\label{2.5}
			&\langle\mu(\vr)\px u,\px\psi\rangle\\
			&\hspace{3em}=-\intTO{\frac{\mu(\vr)}{\sqrt{\vr}}\sqrt{\vr}u\,{\partial_{xx}}\psi}-
			\frac{2\gamma}{2\gamma-1}\intTO{\px\lr{\frac{\mu(\vr)}{\sqrt{\vr}}}\sqrt{\vr} u\,{\px\psi}}.	
		}
  \end{itemize}
\end{definition}

\medskip

Before stating our main results, let us recall the definition of the total energy $\calE$ and the modulated energy $\Emod$ for sufficiently smooth pairs $(\rho,u)$ given by,
	\begin{align*}
	\calE(\rho,u) &=  \frac12\intO{\rho u^2} + \calF(\rho),\\
	\Emod(\rho,u) &= \frac12\intO{  \rho |u + \pa_x \delta \calF(\rho)|^2 }+ \calF(\rho).
	\end{align*}
We further recall the functionals 
\begin{align*}
    \calJ_{\tau,\lambda}(\rho,u) &= \Emod(\rho,u) + \tau\calF(\rho) + \lambda\calE(\rho,u),\\
    \calD_{\tau,\lambda}(\rho,u) &= \frac12\intO{ \rho |\pa_x \delta \calF(\rho)|^2} + \ell_{\tau,\lambda}\intO{\rho|u|^2} + \lambda \intO{ \mu(\rho)|\pa_x u|^2} + c_\lambda\calK_\phi(\rho,u).
\end{align*}
Our first result pertains to the existence of weak solutions to the nonlocal Navier-Stokes system \eqref{main_eq}.

% \begin{theorem}\label{Th:1} Let $\tau\ge 0$ and the conditions $(\mathcal{A})$ hold.
% 		% \equ{\label{assWp}
% 		% \phi \in L^{\frac\gamma{\gamma-1}}(\R). %\rho \in L^\infty( \R_+ \times \Om) \quad \mbox{and} \quad \red{|\pa_{xx}W(x)| \leq c_{\phi, W}} \phi(x) \quad \forall x \in \R\backslash\{0\}
% 		%}
% 	%	for some $c_{\phi,W} > 0$. \red{YP: What do we do about this assumption? Is lower bound of $\phi$ needed?}

% %  {\color{red} $(\partial_{xx}W)^- \leq c_{\phi, W}\phi$ for $x\neq 0$$(\partial_{xx}W)^+ \leq c_W$}

% 	Then there exists a weak solution to system \eqref{main_eq} in the sense of Definition \ref{Def:1}.
%  In addition, for $\tau>0$ the solution satisfies the modulated energy inequality
%  \equ{\label{mod_int}
%    \sup_{t\in[0,T]} \calJ_{\tau,\lambda}(\rho,u)(t)+\int_0^{T} \calD_{\tau,\lambda}(\rho,u)\,\dt\leq \calJ_{\tau,\lambda}(\rho_0,u_0)
% }
%  is satisfied uniformly w.r.t. $T$. \red{this inequality also holds for $\tau=0$}
% 	\end{theorem}

\begin{theorem}\label{Th:1} Let $\tau\ge 0$ and the conditions $(\mathcal{A})$ hold. Then there exist a weak solution to system \eqref{main_eq} in the sense of Definition \ref{Def:1}, and $\lambda>0$ such that the modulated energy inequality
 \equ{\label{mod_int}
   \sup_{t\in[0,T]} \calJ_{\tau,\lambda}(\rho,u)(t)+\int_0^{T} \calD_{\tau,\lambda}(\rho,u)\,\dt\leq \calJ_{\tau,\lambda}(\rho_0,u_0)
}
holds with constants $\ell_{\tau,\lambda}\in\R$ and $c_\lambda > 0$.

If either $\tau>0$ or $\phi$ is uniformly bounded from below, i.e.\ $\phi\ge \underline{\phi}$ for some constant $\underline{\phi}>0$, then $\lambda>0$ can be chosen such that $\ell_{\tau,\lambda}>0$. In these cases, $\calD_{\tau,\lambda}\ge 0$, and the modulated energy inequality \eqref{mod_int} is satisfied uniformly w.r.t.\ $T$.
\end{theorem}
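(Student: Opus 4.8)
\emph{Strategy.} The plan is to obtain the weak solution of Theorem~\ref{Th:1} as the limit, along a subsequence $\ep\to 0$, of smooth solutions $(\rho^\ep,u^\ep)$ to a non-degenerate regularisation of \eqref{main_eq}, and to transfer the modulated BD-type inequality sketched in \eqref{eq:diff-inq}--\eqref{eq:int-inq} to this limit by lower semicontinuity. Concretely, I would replace \eqref{main_eq} by an approximate system carrying: an artificial non-degenerate viscosity $\ep\,\partial_{xx}u$ (and, if the fixed-point scheme requires it, a higher-order regularising term removing the vacuum degeneracy); an artificial pressure $\ep\rho^{k}$ with $k\gg 1$ giving two-sided density control; truncation of the spatial domain to $(-R,R)$ with suitable boundary conditions; and mollified, strictly positive initial data $(\rho_0^\ep,u_0^\ep)$ built, following Chen et al.\ \cite{CHWY2023}, so that $\calJ_{\tau,\lambda}(\rho_0^\ep,u_0^\ep)\to\calJ_{\tau,\lambda}(\rho_0,u_0)$. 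For fixed $\ep$ this problem is uniformly parabolic and admits a unique smooth solution on $[0,T]$ by an iteration argument in the spirit of Constantin et al.\ \cite{CDS20}; this is the content of Section~\ref{Sec:existence}.

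\emph{Uniform estimates.} I would then run the formal computation behind \eqref{eq:diff-inq} at the approximate level: testing the regularised momentum equation against the augmented velocity $w^\ep = u^\ep + \partial_x\delta\calF(\rho^\ep)$, against $u^\ep$, and combining with \eqref{main_eq1} tested against $\delta\calF(\rho^\ep)$; the nonlocal force is handled through Assumption~$(\calA_W)$(iii), absorbing $(\partial_{xx}\tw)^-$ into $c_\lambda\calK_\phi$ and $(\partial_{xx}\tw)^+\le c_W$ into controlled lower-order terms, while the alignment commutator is treated as in \cite[Chapter 5]{MMPZ}. Choosing $\lambda\gg1$ makes all sign-indefinite cross terms subordinate and yields
\begin{equation}\nonumber
\sup_{\ep>0}\left\{\sup_{t\in[0,T]}\calJ_{\tau,\lambda,\ep}(\rho^\ep,u^\ep)(t)+\int_0^T\calD_{\tau,\lambda,\ep}(\rho^\ep,u^\ep)\,\dt\right\}<\infty,
\end{equation}
with a second-moment bound $|x|^{2}\rho^\ep\in L^\infty(0,T;L^1)$ following independently by a Gr\"onwall argument from the $|x|^{\kappa+2}\rho_0$-moment in \eqref{ini}. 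From the finiteness of $\calJ_{\tau,\lambda,\ep}$ and $\calD_{\tau,\lambda,\ep}$ one reads off, uniformly in $\ep$: $\rho^\ep\in L^\infty(0,T;L^1\cap L^\infty)$, $\sqrt{\rho^\ep}\,u^\ep\in L^\infty(0,T;L^2)$, $\partial_x\big((\rho^\ep)^{\gamma-\frac12}\big)\in L^\infty(0,T;L^2)$ (equivalently $\sqrt{\rho^\ep}\,\partial_x\delta\calF(\rho^\ep)\in L^\infty(0,T;L^2)$), $\sqrt{\mu(\rho^\ep)}\,\partial_x u^\ep\in L^2((0,T)\times\R)$ and $\calK_\phi(\rho^\ep,u^\ep)\in L^1(0,T)$. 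Finally I would prove the Mellet--Vasseur estimate of Section~\ref{sec:compactness} by testing the momentum equation against $(1+|u^\ep|^2)^{\kappa/2}u^\ep$; the relation \eqref{1.4} between $\mu$ and $\varphi$ is precisely what makes the higher-order terms telescope, giving a uniform bound $\rho^\ep|u^\ep|^{2+\kappa}\in L^\infty(0,T;L^1)$ that excludes concentration of kinetic energy on the vacuum set.

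\emph{Compactness and passage to the limit.} The $L^\infty_tL^2_x$-bound on $\partial_x\big((\rho^\ep)^{\gamma-\frac12}\big)$ gives spatial compactness of $(\rho^\ep)^{\gamma-\frac12}$, while \eqref{main_eq1} and the above bounds give time equicontinuity, so by Aubin--Lions--Simon $\rho^\ep\to\rho$ strongly in $C([0,T];L^p_{\mathrm{loc}})$ for $p\in[1,\infty)$ and a.e., the second-moment bound removing the loss at spatial infinity; in particular $\rho\in C_w([0,T];L^\gamma)$. For the momentum, $\sqrt{\rho^\ep}u^\ep$ is bounded in $L^\infty_tL^2_x$ and $\partial_t(\rho^\ep u^\ep)$ lies in a fixed negative-order space, so following Mellet--Vasseur \cite{MV07} one upgrades weak convergence of $\rho^\ep u^\ep$ to $\sqrt{\rho^\ep}u^\ep\to\sqrt\rho u$ strongly in $L^2((0,T)\times K)$ for every compact $K$, which suffices for $\rho^\ep(u^\ep)^2$. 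The diffusion term is passed to the limit in the renormalised form of Definition~\ref{Def:1}, through $\partial_x\big(\mu(\rho^\ep)/\sqrt{\rho^\ep}\big)=\gamma\,\partial_x\big((\rho^\ep)^{\gamma-\frac12}\big)$ and $\big(\mu(\rho^\ep)/\sqrt{\rho^\ep}\big)\sqrt{\rho^\ep}u^\ep$, both controlled above, the $\ep$-corrections vanishing since they carry an explicit factor $\ep$. The pressure part $\rho^\ep\partial_x\varphi(\rho^\ep)=\partial_x\big((\rho^\ep)^\gamma\big)$ converges in distributions; the interaction force $\rho^\ep(\partial_x W\ast\rho^\ep)$, with $\partial_x W=-\,\mathrm{sgn}+\partial_x\tw$, converges by dominated convergence using the unit mass, the growth bound $(\calA_W)$(ii) and the second-moment bound; and the alignment term is split via $(\calA_\phi)$ into a bounded far-field piece (trivial) and a near-field singular piece handled by Hölder duality between $L^{\gamma}$ and $L^{\gamma/(\gamma-1)}$ together with the $\calK_\phi$-bound. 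Passing to the $\liminf$ in the approximate energy inequality and using convexity/Fatou for each term of $\calJ_{\tau,\lambda}$ and $\calD_{\tau,\lambda}$ (the kinetic and modulated-kinetic parts rewritten in the $\sqrt\rho u$ variable, the interaction energy controlled by the lower bound in the Remark) gives \eqref{mod_int}, with $\ell_{\tau,\lambda}\in\R$ and $c_\lambda>0$ inherited from the a priori computation.

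\emph{The refined statement and the main obstacle.} The sign of $\ell_{\tau,\lambda}$ is decided inside the a priori computation of Section~\ref{Sec:apriori}: the coefficient of $\intO{\rho|u|^2}$ collects the damping contribution $\sim\tau(1+\lambda)$ and, when $\phi\ge\underline{\phi}>0$, an alignment contribution $\gtrsim\underline{\phi}$, against a negative part $\lesssim c_W$ from $(\partial_{xx}\tw)^+$ and the Young splittings; hence, fixing $\lambda$ large first and then invoking $\tau>0$ or $\underline{\phi}>0$, one gets $\ell_{\tau,\lambda}>0$, so $\calD_{\tau,\lambda}\ge0$. In that case the integrated inequality yields $\calJ_{\tau,\lambda}(\rho(t),u(t))\le\calJ_{\tau,\lambda}(\rho_0,u_0)$ for all $t$ with right-hand side independent of $T$, and since $W$ is bounded from below by Assumption~$(\calA_W)$ the functional $\calF$, hence $\calJ_{\tau,\lambda}$, is bounded below uniformly, so \eqref{mod_int} holds uniformly in $T$; this is the input for the long-time analysis of Section~\ref{Sec:lt}. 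The main obstacle throughout is the vacuum: making rigorous the limit in the degenerate stress $\mu(\rho)\partial_x u=\gamma\rho^\gamma\partial_x u$ and in $\rho u^2$ on $\{\rho=0\}$ is exactly what forces the combined use of the renormalised diffusion formulation, the BD control of $\partial_x(\rho^{\gamma-\frac12})$, and the Mellet--Vasseur bound; a secondary difficulty is controlling the singular alignment kernel $\phi$ in the limit with only the integrability afforded by $(\calA_\phi)$ and the $\calK_\phi$-bound.
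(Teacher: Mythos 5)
Your overall strategy — regularize, prove local existence via an iteration in the spirit of Constantin et al., derive a uniform BD-type estimate, add a Mellet–Vasseur bound, and pass to the limit using strong convergence of the density and $\sqrt{\rho}u$ together with lower semicontinuity of the energy terms — is indeed the same route the paper takes, and the outline of the compactness step, the renormalised diffusion formulation, and the sign discussion for $\ell_{\tau,\lambda}$ all match. However, the specific regularization you propose would not actually support the argument, and this is not a cosmetic detail: it is the step on which everything else leans.

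You suggest an artificial \emph{non-degenerate} viscosity $\ep\,\partial_{xx}u$ and an artificial pressure $\ep\rho^k$ with $k\gg 1$ ``giving two-sided density control.'' Both choices break the mechanism that makes the BD estimate uniform in $\ep$. First, the modulated-energy cancellation leading to \eqref{eq:diff-inq} uses the algebraic identity $\rho\bigl(\pa_x(\varphi'(\rho)\rho\pa_x u)+\pa_x u\,\pa_x\varphi(\rho)\bigr)=\pa_x(\mu(\rho)\pa_x u)$, which is exactly the relation $\mu=\rho^2\varphi'$ from \eqref{1.4}. A constant-viscosity term $\ep\pa_{xx}u$ is not of this form and would leave an uncontrolled residual in the BD computation, so the regularized system must instead keep the relation $\mu_\ep=\rho^2\varphi_\ep'$; the paper does so by setting $\varphi_\ep(\rho)=\varphi(\rho)+\tfrac{\ep^\beta\alpha}{\alpha-1}\rho^{\alpha-1}$ and $\mu_\ep(\rho)=\mu(\rho)+\ep^\beta\alpha\rho^{\alpha}$. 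Second, $\ep\rho^k$ with $k\gg 1$ does \emph{not} give a lower bound on $\rho^\ep$: the density lower bound is deduced from the BD control of $\px\rho^{\alpha-\sfrac12}$ in $L^\infty_tL^2_x$, which is an $L^\infty$-bound on $\rho^{-1}$ precisely because $\alpha<\sfrac12$ (so $\rho^{\alpha-\sfrac12}$ blows up at vacuum); with your $k\gg1$ you would instead control $\px\rho^{k-\sfrac12}$, which vanishes at vacuum and carries no positivity information. Without this lower bound the local existence argument (Proposition~\ref{app-lt}, via characteristics in the iteration) is unavailable, and the global extension step fails. So the heart of the missing idea is that one must choose the artificial pressure exponent \emph{below} $\sfrac12$, not large, and choose the artificial viscosity to preserve the BD compatibility, rather than adding $\ep\pa_{xx}u$.

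Two smaller remarks: your Mellet–Vasseur test function $(1+|u|^2)^{\kappa/2}u$ is a harmless variant of the paper's $u|u|^\kappa$; and your final lower-semicontinuity step ``convexity/Fatou for each term'' needs to be formulated at the level of measure couples $(\sigma,\omega)\mapsto\int|\d\omega/\d\sigma|^2\d\sigma$ (as in the paper's Proposition~\ref{prop:lsc} and the measure reformulation of $\calJ$ and $\calD$), because the pointwise limits of $\sqrt{\rho^\ep}\,\pa_x\delta\calF_\ep(\rho^\ep)$ etc.\ are not available near vacuum and the fluxes only converge distributionally.
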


 \begin{remark}
 The hypothesis \eqref{ini} implies 
     \[
	\calE(\rho_0,u_0) \coloneq  \frac12\intO{\rho_0 u_0^2}  + \calF(\rho_0) < \infty ,
	\]
	\[
	 \Emod(\rho_0,u_0) \coloneq \frac12\intO{  \rho_0 |u + \pa_x \delta \calF(\rho_0)|^2} + \calF(\rho_0) <\infty .
	\]
 \end{remark}
	Once the existence of solutions is established we turn our attention to their long-time asymptotic. Our second main result reads.

		\begin{theorem}\label{Th:2} Let $(\rho,\sqrt{\vr}u)$ be the global-in-time weak solution to the Navier-Stokes system \eqref{main_eq} provided by Theorem \ref{Th:1}, and let either $\tau>0$ or $\phi$ uniformly bounded from below. Then, we have
			\[
			\intO{ \rho\, u^2} \to 0 \qquad \text{and} \qquad  \intO{\rho \,(\pa_x \delta \calF(\rho))^2} \to 0\qquad\text{as $t\to \infty$}.
			\]
Moreover, if $W$ is bounded and Lipschitz continuous, then there exists $\rho_\infty\in H^1(\R)$ such that
   \[
    \begin{gathered}
   \rho(t)\to\rho_\infty\quad  strongly\ in \ L^{p}(\R), \ p\in[1,\infty),\\ 
   \sqrt{\rho(t)}\, u(t)\to 0 \quad strongly\  in \ L^1(\R).
   \end{gathered}
   \]
   up to a subsequence, and $\rho_\infty$ is characterized by
			\[
			 \px\varphi(\rho_\infty) + \px W * \rho_\infty =0 \quad \mbox{on } {\rm supp}(\rho_\infty). 
			\]
		\end{theorem}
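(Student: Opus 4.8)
The plan is to exploit the uniform-in-time modulated energy inequality \eqref{mod_int} together with the strict positivity of $\ell_{\tau,\lambda}$ in the regime $\tau>0$ or $\phi\ge\underline\phi$. First I would observe that \eqref{mod_int}, being uniform in $T$, gives a finite bound
\[
\int_0^\infty \calD_{\tau,\lambda}(\rho,u)\,\dt \le \calJ_{\tau,\lambda}(\rho_0,u_0)<\infty,
\]
and since $\ell_{\tau,\lambda}>0$ and $c_\lambda>0$ all four terms of $\calD_{\tau,\lambda}$ are nonnegative, so each is separately integrable on $(0,\infty)$. In particular the nonnegative functions $t\mapsto\intO{\rho u^2}$ and $t\mapsto\intO{\rho(\px\delta\calF(\rho))^2}$ lie in $L^1(0,\infty)$. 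To upgrade integrability to decay I would show these maps are uniformly continuous in $t$ (equivalently, have bounded-in-time derivative in a suitable weak sense), using the equations \eqref{main_eq} together with the uniform energy bounds $\sup_t\calJ_{\tau,\lambda}(\rho(t),u(t))<\infty$ (hence uniform bounds on $\|\sqrt\rho u\|_{L^2}$, $\calF(\rho)$, $\|\px\rho^{\gamma-1/2}\|_{L^2}$, and the moment $\intO{|x|^2\rho}$ controlled via Remark~(c) and a Grönwall-type argument on the second moment). A function in $L^1(0,\infty)$ that is uniformly continuous tends to $0$; this yields the first two limits. The main technical care here is that $u$ itself is not controlled pointwise—only $\sqrt\rho u$ is—so the continuity estimates must be phrased entirely in terms of $\rho$ and $\sqrt\rho u$ and the degenerate viscosity structure, much as in the weak formulation of Definition~\ref{Def:1}.

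For the second part, assume $W$ bounded and Lipschitz. The aim is compactness of $\{\rho(t)\}_{t\ge0}$ in $L^p$. From $\sup_t\|\px\rho^{\gamma-1/2}\|_{L^2}<\infty$ and $\sup_t\|\rho\|_{L^1\cap L^\infty}<\infty$ I would derive a uniform $H^1$-type bound on a power of $\rho$, giving local compactness in space; tightness at spatial infinity comes from the uniform second-moment bound $\sup_t\intO{|x|^2\rho}<\infty$. Hence along a subsequence $t_n\to\infty$, $\rho(t_n)\to\rho_\infty$ strongly in $L^p(\R)$ for all $p\in[1,\infty)$, with $\rho_\infty\in H^1(\R)$ (the exact Sobolev space following from the uniform gradient bound on $\rho^{\gamma-1/2}$ combined with the $L^\infty$ bound; if $\gamma$ forces an intermediate power one interpolates). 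Since $\sqrt{\rho(t_n)}u(t_n)\to0$ in $L^2$ would require passing through $\intO{\rho u^2}\to0$, and one only has $L^2$ there, I would instead get the stated $L^1$ convergence of $\sqrt{\rho(t_n)}u(t_n)$ by Cauchy--Schwarz: $\|\sqrt\rho u\|_{L^1(\R)}\le \|\rho\|_{L^1}^{1/2}\,\|\sqrt\rho u\|_{L^2}\to0$ using mass $\equiv1$ and $\intO{\rho u^2}\to0$.

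To identify $\rho_\infty$, I would pass to the limit in the dissipation term $\intO{\rho(\px\delta\calF(\rho))^2}$. Writing $\px\delta\calF(\rho)=\px\varphi(\rho)+\px W*\rho$, the strong $L^p$ convergence $\rho(t_n)\to\rho_\infty$ together with boundedness and Lipschitz continuity of $W$ gives $\px W*\rho(t_n)\to\px W*\rho_\infty$ uniformly (or in $L^p$), and $\px\varphi(\rho(t_n))=\frac{\gamma}{\gamma-1}\px\rho(t_n)^{\gamma-1}$ converges in a distributional sense using the uniform bound on $\px\rho^{\gamma-1/2}$; weak lower semicontinuity of the convex functional $v\mapsto\intO{\rho_\infty v^2}$ (or direct Fatou after extracting a.e.\ convergence) then forces
\[
\intO{\rho_\infty\,(\px\delta\calF(\rho_\infty))^2}\le \liminf_{n\to\infty}\intO{\rho(t_n)\,(\px\delta\calF(\rho(t_n)))^2}=0,
\]
whence $\px\varphi(\rho_\infty)+\px W*\rho_\infty=0$ on $\{\rho_\infty>0\}=\supp(\rho_\infty)$. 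The Euler--Lagrange interpretation—that $\rho_\infty$ is a critical point, hence (by convexity of $\calF$ up to the sign of $W$, or by the stated structure of $W$) a minimizer of $\calF$ on the mass-$1$ constraint set—follows by recognizing the identity as the first-order optimality condition.

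\medskip

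\emph{Main obstacle.} The hardest step is the uniform-continuity-in-time estimate needed to convert $L^1_t$-integrability of $\intO{\rho u^2}$ and $\intO{\rho(\px\delta\calF(\rho))^2}$ into pointwise decay, because the natural a priori controls are all on $\sqrt\rho u$ and powers of $\rho$, not on $u$ or $\px u$ individually, so the time-derivative estimates must be carried out carefully within the degenerate-viscosity weak framework (mirroring the compactness arguments of Section~\ref{sec:compactness} and the Mellet--Vasseur estimate) rather than by naive differentiation under the integral sign. A secondary subtlety is ensuring the second-moment bound $\sup_t\intO{|x|^2\rho}<\infty$ genuinely holds uniformly in time when $\tau=0$ but $\phi\ge\underline\phi$; this should follow from testing the continuity and momentum equations against $|x|^2$ and using the damping-like coercivity that $\underline\phi>0$ provides through $\calK_\phi$, but it requires the relation $(\calA_W)$(iii) linking $(\px_{xx}\tw)^-$ to $\phi$ to absorb the confinement contribution.
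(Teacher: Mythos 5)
Your overall strategy (integrability of the dissipation in time, plus a time-regularity argument to get decay, plus compactness of $\rho(t_n)$ and identification of the limit by lower semicontinuity) is the right shape, but two steps, as you have set them up, do not go through. First, the decay step. You propose to show that $t\mapsto\intO{\rho u^2}$ and $t\mapsto\intO{\rho(\px\delta\calF(\rho))^2}$ are uniformly continuous (``equivalently, have bounded-in-time derivative''), and you flag this as the main obstacle without carrying it out. Two problems: (a) a bounded-in-time derivative is not available — the natural estimate bounds $\frac{\d}{\dt}\intO{\rho|u|^2}$ and $\frac{\d}{\dt}\intO{\rho|u+\px\delta\calF(\rho)|^2}$ by the dissipation quantities $\intO{\rho(\px\delta\calF(\rho))^2}$, $\intO{\mu(\rho)|\px u|^2}$, $\calK_\phi(\rho,u)$, $\intO{\rho u^2}$, which are only integrable in time, not bounded; the correct mechanism is therefore a $W^{1,1}(0,\infty)$ bound (a function in $W^{1,1}(0,\infty)$ tends to zero), not Barbalat-type uniform continuity; and (b) these time derivatives cannot be computed on the weak solution at all, since the weak formulation does not allow testing the momentum equation with $u$ or $u+\px\delta\calF(\rho)$. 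The paper resolves this by running the entire computation (revisiting Lemma~\ref{lem:bd-estimate}) on the regular approximate solutions $(\rho^\ep,u^\ep)$ on $\Omega_\ep$, where differentiation is legitimate, obtaining $W^{1,1}(0,\infty)$ bounds uniform in $\ep$ from \eqref{est:BD_app_uniform}, and only then transferring the decay to the limit via the liminf inequalities $\intO{\rho u^2}\le\liminf_{\ep}\int_{\Omega_\ep}\rho^\ep|u^\ep|^2$ and similarly for the $\px\delta\calF$ term. Your proposal works directly with $(\rho,u)$ and leaves exactly this gap open.

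Second, the compactness step. Your tightness argument rests on $\sup_{t\ge0}\intO{|x|^2\rho}<\infty$, which you claim via Gr\"onwall on the second moment and via coercivity from $\phi\ge\underline{\phi}$. This is not available: Gr\"onwall applied to \eqref{x2rho} only gives bounds growing in time, and even with $\intO{\rho u^2}\in L^1(0,\infty)$ one only gets $\frac{\d}{\dt}(\intO{x^2\rho})^{1/2}\le(\intO{\rho u^2})^{1/2}\in L^2(0,\infty)$, i.e.\ growth of order $O(t^{1/2})$ at best; the paper explicitly remarks at the end of Section~\ref{Sec:lt} that the second moment may fail to be uniformly bounded. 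This is precisely the reason the extra hypothesis that $W$ be bounded and Lipschitz (i.e.\ \eqref{add_con}) is imposed: it yields $\sup_{t\ge0}\intO{\rho|\px W*\rho|^2}<\infty$ directly, which combined with the time-uniform modulated energy bound gives $\sup_t\|\px(\rho^{\gamma-1/2})\|_{L^2}+\sup_t\|\rho\|_{L^\infty}<\infty$, hence $\sup_t\|\rho^\gamma\|_{H^1}<\infty$, and the compactness is then extracted for $\rho^\gamma(t_n)$ rather than through moment tightness. So you should replace the moment-based tightness by this mechanism. Your Cauchy--Schwarz argument for $\sqrt{\rho}u\to0$ in $L^1$ and the Fatou/lower-semicontinuity identification of $\rho_\infty$ are consistent with the paper's treatment.
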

		
		\begin{remark} 
			The boundedness and Lipschitz continuity conditions on the interaction potential $W$ can be relaxed to requiring 
			\bq\label{add_con}
			\sup_{t \geq 0} \lt( \intO{\rho|W * \rho|} + \intO{\rho|\pa_x W * \rho|^2}\rt) < \infty.
			\eq
			Clearly, if the interaction potential $W$ is Lipschitz and bounded, then \eqref{add_con} holds.
		\end{remark}
	
	% \begin{theorem}\label{Th:2}
		% There exist a sequence $t_n\to \infty$ for $n\to\infty$, such that 
		% \equ{
			% \rho(t_n,x)\to \rho_\infty(x),
			% }
		% where $\rho_\infty(x)$ satisfies
		% \equ{\intO{\rho_\infty(x)\lr{\px\delta\calF(\rho_\infty(x))}^2}=0.}
		% \end{theorem}

	\section{A priori energy and modulated energy estimate}\label{Sec:apriori}
	This section is dedicated to the derivation of the modulated energy inequality
 that allows us to deduce the following  a-priori estimates:
\equ{\label{est_mod}
	&\sup_{t\in[0,T]}\lr{\intO{\vr|u|^2}+\calF(\rho)}\\
	&\qquad+\intTO{\mu(\vr)|\px u|^2}+C(\tau)\intTO{\rho u^2}+\int_0^T\calK_\phi(\rho,u)\,\dt\leq C(\rho_0,u_0),\\
	&\sup_{t\in[0,T]}\intO{\vr|\px\delta\calF(\rho)|^2}+\intTO{ \rho (\pa_x \delta \calF(\rho))^2}\leq C(\rho_0,u_0),
}
for some constant $C(\tau)>0$.

\medskip

We have the following result.
	
	\begin{lemma}\label{lem:bd-estimate}
		Let $(\vr,u)$ be a regular solution to system \eqref{main_eq} for $\tau\geq0$ on the interval $[0,T]$, and let Assumption~$(\mathcal{A})$ be satisfied. Then there exists $\lambda > 0$  such that
		% \equ{\label{est:BD}
		% 	\Dt\calJ_{\tau,\lambda}(\rho,u)
  %  \leq - \frac12\intO{ \rho |\pa_x \delta \calF(\rho)|^2} - \ell_{\tau,\lambda}\intO{\rho|u|^2} -\lambda \intO{ \mu(\rho)|\pa_x u|^2} - c_\lambda\calK_\phi(\rho,u), %(\tau+\lambda\tau - 2\frkm^2c_W^2)
		% }
    \equ{\label{est:BD}
        \Dt\calJ_{\tau,\lambda}(\rho,u) \le -\calD_{\tau,\lambda}(\rho,u)
    }
		with constants $\ell_{\tau,\lambda}\in\R$ depending on $\tau$ and $\lambda$, $c_\lambda > 0$ depending only on $\lambda$.
	\end{lemma}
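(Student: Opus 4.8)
The plan is to compute the time derivatives of the three building blocks of $\calJ_{\tau,\lambda}$ — the modulated energy $\Emod$, the free energy $\calF$, and the total energy $\calE$ — along regular solutions, and then combine them with suitable weights so that the bad terms cancel and the remainder is bounded below by $\calD_{\tau,\lambda}$. First I would record the standard (total) energy identity
\[
    \Dt\calE(\rho,u) = -\intO{\mu(\rho)|\px u|^2} - \tau\intO{\rho u^2} - \calK_\phi(\rho,u),
\]
which follows by testing \eqref{main_eq2} with $u$, using \eqref{main_eq1}, the symmetry of $\phi$ to produce $\calK_\phi$, and the definition of $\delta\calF$; the interaction part is handled via $\Dt\calF(\rho) = \intO{\delta\calF(\rho)\,\pa_t\rho} = -\intO{\delta\calF(\rho)\,\px(\rho u)} = \intO{\rho u\,\px\delta\calF(\rho)}$.

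The heart of the matter is the evolution of $\Emod$. Writing $w = u + \px\delta\calF(\rho)$, I would use the continuity equation and the momentum equation to derive an equation for $w$ (this is exactly the Bresch–Desjardins computation, where the relation $\mu(\rho)=\rho^2\varphi'(\rho)$ from \eqref{1.4} is what makes $\px(\mu(\rho)\px u)$ combine with $\rho\px\px\delta\calF(\rho)$-type terms into a perfect structure). Testing the $w$-equation against $w$ and adding $\Dt\calF$, one obtains, after integration by parts,
\[
    \Dt\Emod(\rho,u) = -\intO{\mu(\rho)|\px u|^2} \; -\; \intO{\rho\,u\cdot\px\delta\calF(\rho)} \;-\; \tau\intO{\rho\,u\cdot w} \;-\; \calK_\phi\text{-type term from the alignment commutator} \;+\; (\text{interaction remainder}).
\]
The term $-\intO{\rho u\,\px\delta\calF(\rho)}$ should be rewritten using $u = w - \px\delta\calF(\rho)$ to extract $-\intO{\rho|\px\delta\calF(\rho)|^2} + \intO{\rho\, w\,\px\delta\calF(\rho)}$; the first of these is the good dissipative term appearing in $\calD_{\tau,\lambda}$, and the second is a cross term to be absorbed. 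The alignment commutator contributes, after symmetrization, a term controlled by $\calK_\phi$ plus a term of the form $\iintO{\phi(x-y)(u(x)-u(y))(\px\delta\calF(\rho)(x)-\px\delta\calF(\rho)(y))\rho(x)\rho(y)}$, which by Young's inequality is bounded by $\tfrac12\calK_\phi$ plus a term involving $\iintO{\phi(x-y)|\px\delta\calF(\rho)(x)-\px\delta\calF(\rho)(y)|^2\rho(x)\rho(y)}$.

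The main obstacle — and the place where Assumption $(\mathcal A)$ enters decisively — is controlling the interaction remainder and this last $\phi$-weighted difference term for $\px\delta\calF(\rho) = \px\varphi(\rho) + \px W*\rho$. Splitting $W = -|x| + \tw$, the $-|x|$ piece is convex-up-to-sign and its contributions have favorable signs or reduce to the mass/confinement bounds; for $\tw$, the decomposition $\px\px\tw = (\px\px\tw)^+ - (\px\px\tw)^-$ together with $(\px\px\tw)^+\le c_W$ and $(\px\px\tw)^- \le c_{\phi,W}\phi$ is precisely what lets one dominate the dangerous terms: the $c_W$ bound gives terms absorbable into $\lambda\calE$-controlled quantities (hence the need for $\lambda\gg1$), while the $c_{\phi,W}\phi$ bound lets the remaining bad difference term be absorbed into $c_\lambda\calK_\phi$. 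After collecting everything, one forms $\calJ_{\tau,\lambda} = \Emod + \tau\calF + \lambda\calE$, chooses $\lambda$ large enough that the $\lambda\,\mu(\rho)|\px u|^2$ and $\lambda\calE$-dissipation terms dominate the $O(1)$ error terms, and reads off the stated inequality with $\ell_{\tau,\lambda} = (\text{coefficient of }\intO{\rho|u|^2})$; when $\tau>0$ or $\phi\ge\underline\phi>0$ the $\tau\intO{\rho u^2}$ (from $\lambda\calE$) or the uniform lower bound on $\calK_\phi$ forces $\ell_{\tau,\lambda}>0$. I would also need to check $\calJ_{\tau,\lambda}>-\infty$, which follows from Remark~(c)'s lower bound on the interaction energy combined with the second-moment control that the $\tau$ or damping terms provide — but this finiteness is stated as a hypothesis in the functional's definition, so for the purposes of Lemma~\ref{lem:bd-estimate} it suffices to establish the differential inequality.
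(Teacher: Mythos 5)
Your overall strategy is the right one and matches the paper's: compute $\Dt\Emod$, $\Dt\calF$, $\Dt\calE$ and choose a linear combination so that the dangerous cross terms are absorbed using Assumption~$(\mathcal A)$ and a large $\lambda$. However, your sketched formula for $\Dt\Emod$ contains a structural mistake that would derail the proof if carried out. The key consequence of the relation $\mu(\rho)=\rho^2\varphi'(\rho)$ is that the viscous flux \emph{cancels} in the equation for $\rho w = \rho u + \rho\px\delta\calF(\rho)$: one has $\pa_t(\rho\px\varphi(\rho)) + \px(\rho u\,\px\varphi(\rho)) = -\px(\mu(\rho)\px u)$, so the $\px(\mu(\rho)\px u)$ term from the momentum equation is exactly annihilated. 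Consequently $\Dt\Emod$ contains \emph{no} $-\int\mu(\rho)|\px u|^2$ term; the dissipation for the modulated energy comes entirely from $-\int\rho|\px\delta\calF(\rho)|^2$, which arises by combining the $-\int\rho w\,\px\delta\calF(\rho)$ contribution from the tested $w$-equation with $\Dt\calF=\int\rho u\,\px\delta\calF(\rho)$, yielding $-\int\rho(w-u)\px\delta\calF = -\int\rho|\px\delta\calF|^2$ directly. Your displayed identity lists $-\int\mu|\px u|^2$ and $-\int\rho u\,\px\delta\calF$ side by side (the latter with the $\Dt\calF$ contribution apparently not yet folded in), and your subsequent substitution $u = w - \px\delta\calF$ produces $+\int\rho|\px\delta\calF|^2 - \int\rho w\,\px\delta\calF$, not the $-\int\rho|\px\delta\calF|^2 + \int\rho w\,\px\delta\calF$ you wrote — so the sign of the central dissipative term is reversed. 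In the paper's organization (cf.~the decomposition $\Dt\Emod = I_1+I_2+I_3$) the viscous term shows up with a $+$ sign inside $I_2+I_3$ and cancels the $-\int\mu|\px u|^2$ coming from $I_1=\Dt\calE$; the $\lambda\calE$ piece is then what supplies the $\lambda\int\mu|\px u|^2$ in $\calD_{\tau,\lambda}$, not $\Emod$.

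There are two further gaps. First, your proposed symmetrization of $\int\rho\,\px\delta\calF\,[\phi*,u]\rho$ produces the term $\iint\phi(x-y)\bigl(\px\delta\calF(\rho)(x)-\px\delta\calF(\rho)(y)\bigr)^2\rho(x)\rho(y)$, which is \emph{not} of the form $\calK_\phi$ (it has the wrong variable inside the square), so it cannot be ``absorbed into $c_\lambda\calK_\phi$'' as you claim. The paper avoids this by \emph{not} symmetrizing the $\px\delta\calF$ factor: it applies Cauchy--Schwarz in the form $\sqrt{\phi}|u(x)-u(y)|\sqrt{\rho(x)\rho(y)}\cdot\sqrt{\phi\rho(y)}\sqrt{\rho(x)}|\px\delta\calF(x)|$, which yields a product of $\calK_\phi$ and $\|\phi*\rho\|_{L^\infty}\int\rho|\px\delta\calF|^2$, each absorbable into the already-present dissipative terms with a small-constant Young split. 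Your route can in principle be repaired — one bounds $\iint\phi|\px\delta\calF(x)-\px\delta\calF(y)|^2\rho\rho \le 4\|\phi*\rho\|_{L^\infty}\int\rho|\px\delta\calF|^2$ and absorbs into $-\int\rho|\px\delta\calF|^2$ — but you must say so explicitly and tune the Young constant, and in any case it lands on the same quantity. Second, both routes hinge on the uniform bound $\sup_{t}\|\phi*\rho(t)\|_{L^\infty}<\infty$, which the paper establishes as a preliminary step from the energy-level control of $\|\rho\|_{L^1\cap L^\gamma}$ together with Assumption~$(\mathcal A_\phi)$; your sketch omits this, yet without it neither $c_\lambda$ nor the absorption constants is well-defined.
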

 
Before proving this lemma let us make a few observations.
\begin{remark}\nonumber %\label{rem:parameters}
\begin{itemize}
    \item[(a)] If $\tau>0$, then $\lambda$ can be chosen sufficiently large such that $\ell_{\tau,\lambda}>0$. In particular, the right-hand side of \eqref{est:BD} is nonpositive, and therefore the estimates in \eqref{est_mod} are uniform in time. The same conclusion holds true if instead of $\tau>0$, we assume that $\phi$ is uniformly bounded from below, i.e.\ $\phi\ge \underline{\phi}$ for some constant $\underline{\phi}>0$.
    \item[(b)] If $\tau=0$ and $\phi$ is allowed to vanish, then $\ell_{\tau,\lambda}=-c_W^2<0$, and applying Gr\"{o}wnwall's lemma to \eqref{est:BD} provides the upper bound estimates in \eqref{est_mod} depending on $T>0$, which can be arbitrary large but finite.
\end{itemize}
\end{remark}

\begin{proof}[Proof of Lemma~\ref{lem:bd-estimate}]
    We only prove the case $\tau>0$ as the case when $\phi\ge \underline{\phi}$ can be done analogously.
    
		Straightforward computation gives that the mass of $\rho$ is preserved in time, i.e.
  \[
    \intO{\vr(t,x)} = \intO{\vr_0(x)} = \frkm,
    \] 
    and that
		\bq\label{energy0}
		\Dt\calE(\rho,u)
		= - \int_\R \mu(\rho)|\pa_x u|^2\,\dx-\tau\intO{\rho u^2} - \frac12\calK_\phi(\rho,u).
		\eq
Since $W$ is bounded from below, we deduce from the above that
  \[
  \sup_{t\in[0,T]} \|\rho(t)\|_{L^1 \cap L^\gamma} < +\infty.
  \]
  In particular, this implies
  \[
    \begin{aligned}
     \sup_{t\in[0,T]}\|\phi * \rho(t)\|_{L^\infty} &\leq \sup_{t\in[0,T]}\|\phi \mathbf{1}_{|\cdot| \leq 1} * \rho(t)\|_{L^\infty} + \sup_{t\in[0,T]}\|\phi \mathbf{1}_{|\cdot| \geq 1} * \rho(t)\|_{L^\infty} \\
     &\leq \sup_{t\in[0,T]}\|\phi \mathbf{1}_{|\cdot| \leq 1}\|_{L^{\frac\gamma{\gamma-1}}}\|\rho(t)\|_{L^\gamma} + \sup_{t\in[0,T]}\|\phi \mathbf{1}_{|\cdot| \geq 1}\|_{L^\infty}\|\rho(t)\|_{L^1} <+\infty.
    \end{aligned}
  \]
  
	For the modulated energy, we estimate
		\equ{\label{split_en}
			\Dt\Emod(\rho,u) &= \Dt\calE(\rho,u) + \frac12\Dt\intO{ \rho |\pa_x \delta \calF(\rho)|^2} + \Dt\intO{ \rho u \pa_x \delta \calF(\rho)}\\
			&=: I_1 + I_2 + I_3.
		}
	We begin by rewriting $I_2$ using the form of $\delta \calF$ in \eqref{dF} and the continuity equation \eqref{main_eq1} to obtain
	\begin{align}\nonumber%\label{BDa}
        \begin{aligned}
			&\intO{\rho \pa_x \delta \calF(\rho)\, \pa_t \pa_x \delta \calF(\rho)} \cr
			&\hspace{4em} = - \intO{\rho \pa_x \delta \calF(\rho)\, \pa_x \lt( (\pa_x \varphi) u + \varphi'(\rho) \rho \pa_x u + \pa_x W * (\rho u) \rt)}\cr
			&\hspace{4em} = - \intO{\rho \pa_x \delta \calF(\rho)  \lt((\pa_{xx} \varphi) u + (\pa_x \varphi)\pa_x u + \pa_x( \varphi'(\rho) \rho \pa_x u) + \pa_{xx} W * (\rho u) \rt)}\cr
			&\hspace{4em} = \frac12 \intO{\pa_x(\rho u) |\pa_x \delta \calF(\rho)|^2 - \int \rho [\pa_{xx}W *, u]\rho \pa_x \delta \calF(\rho)}\cr
			&\hspace{4em}\qquad - \intO{\rho\lt( \pa_x( \varphi'(\rho) \rho \pa_x u)+ (\pa_x u) (\pa_x \varphi)\rt) \pa_x \delta \calF(\rho)}.
        \end{aligned}
	\end{align}
		Since $\mu(\rho) = \rho^2 \varphi'(\rho)$, we can express the integrand in the last term on the right-hand side as
		\[
		\rho\lt(\pa_x( \varphi'(\rho) \rho \pa_x u)+ (\pa_x u) (\pa_x \varphi)\rt) = \pa_x (\mu(\rho)\pa_x u),
		\]
		and thus
		\equ{\nonumber
			\intO{\rho \pa_x \delta \calF(\rho) \,\pa_t \pa_x \delta \calF(\rho)} 
			&= \frac12 \intO{\pa_x(\rho u) (\pa_x \delta \calF(\rho))^2} \\
   &\quad - \intO{\rho [\pa_{xx}W *, u]\rho \pa_x \delta \calF(\rho)}- \intO{\pa_x (\mu(\rho)\pa_x u) \pa_x \delta \calF(\rho)}.
		}
		This yields
		\begin{align*}
			I_2=- \intO{\rho [\pa_{xx}W *, u]\rho \pa_x \delta \calF(\rho)}- \intO{\pa_x (\mu(\rho)\pa_x u) \pa_x \delta \calF(\rho)}.
		\end{align*}
		For $I_3$, we obtain
		\begin{align}\label{BDb}
            \begin{split}
			I_3 &= -\intO{\pa_x (\rho u^2) \pa_x \delta \calF(\rho)} - \intO{\rho |\pa_x \delta \calF(\rho)|^2} + \intO{\rho [\phi*, u]\rho \pa_x \delta \calF(\rho)} \\
			&\qquad + \intO{\pa_x (\mu(\rho)\pa_x u) \pa_x \delta \calF(\rho)} + \intO{\rho u \pa_t \pa_x \delta \calF(\rho)}-\tau\intO{\rho u \pa_x \delta \calF(\rho)}.
            \end{split}
		\end{align}
		Similarly, as before, we can find 
		\begin{align}
			\intO{\rho u \pa_t \pa_x \delta \calF(\rho)} &= \intO{\pa_x (\rho u^2) \pa_x \delta \calF(\rho)} - \intO{\rho u[\pa_{xx}W *, u]\rho} \nonumber \\
			&\qquad- \intO{\rho u \lt((\pa_x u)(\pa_x \varphi) + \pa_x (\varphi'(\rho) \rho \pa_x u) \rt)}\cr
			&=\intO{\pa_x (\rho u^2) \pa_x \delta \calF(\rho)} - \intO{\rho u[\pa_{xx}W *, u]\rho} + \intO{\mu(\rho)|\pa_x u|^2}\nonumber
		\end{align}
and integrating by parts in the last term of \eqref{BDb}, and using the continuity equation, we obtain
	\equ{\nonumber
	-\tau\intO{\rho u\px\delta\calF(\rho)}
 &= \tau\intO{\px(\rho u)\delta\calF(\rho)}=-\tau\intO{\pt\rho\,\delta\calF(\rho)}
 = -\tau\Dt\calF(\rho).
 }
		This gives
		\begin{align*}
			I_3 &=  - \intO{\rho |\pa_x \delta \calF(\rho)|^2} + \intO{ \rho [\phi*, u]\rho \pa_x \delta \calF(\rho)}   -\tau\Dt\calF(\rho) \cr
			&\qquad + \intO{\pa_x (\mu(\rho)\pa_x u) \pa_x \delta \calF(\rho)} - \intO{\rho u[\pa_{xx}W *, u]\rho} + \intO{\mu(\rho)|\pa_x u|^2}.
		\end{align*}
		Putting $I_2$ and $I_3$ together, we arrive at
		\equ{\nonumber
			I_2 + I_3 =  &- \intO{\rho |\pa_x \delta \calF(\rho)|^2}+ \intO{\rho [(\phi - \pa_{xx}W)*, u]\rho \pa_x \delta \calF(\rho)} \\ 
			&\qquad - \intO{\rho u[\pa_{xx}W *, u]\rho}+ \intO{\mu(\rho)|\pa_x u|^2}  -\tau\Dt\calF(\rho).
		}
		We then combine the above with \eqref{energy0} to express \eqref{split_en} as:
		\begin{align}\nonumber%\label{energy22}
			\begin{aligned}
				 \Dt \lt(\Emod(\rho,u) +\tau\calF(\rho)\rt) &= - \intO{\rho |\pa_x \delta \calF(\rho)|^2}+ \intO{\rho [(\phi - \pa_{xx}W)*, u]\rho \pa_x \delta \calF(\rho)} \cr
				&\quad - \frac12\iintO{(\phi + \pa_{xx}W)(x-y)(u(x) - u(y))^2 \rho(x)\rho(y)}\cr
				&\quad -\tau\intO{\rho u^2}\cr
  &   =: J_1 + J_2 + J_3 + J_4.
			\end{aligned}
		\end{align}
Note that 
\[
J_2 = \intO{\rho [(\phi - \pa_{xx}\tw)*, u]\rho \pa_x \delta \calF(\rho)} ,
\]
and
\[
J_3 = - \frac12\iintO{(\phi + \pa_{xx}\tw)(x-y)(u(x) - u(y))^2 \rho(x)\rho(y)}.
\]
Since 
\[
|\phi - \pa_{xx}\tw| \leq  (1 + c_{\phi, W})\phi + c_W,
\]
 we obtain
	\begin{align*}
		| J_2 | &\leq c_W \iintO{ |u(x) - u(y)| |\pa_x \delta \calF(\rho)(x)| \rho(x)\rho(y)   } \\
		&\quad+(1+c_{\phi,W} )  \iintO{ \phi(x-y)|u(x) - u(y)| |\pa_x \delta \calF(\rho)(x)| \rho(x)\rho(y) } =:J_{2,1} + J_{2,2}.
	\end{align*}
 	We then estimate the term $J_{2,1} $ as
		\begin{align*}
		J_{2,1} &\leq  c_W ^2   \iintO{(u(x) - u(y))^2 \rho(x)\rho(y)}  + \frac{1}{4}  \intO{\rho |\pa_x \delta \calF(\rho)|^2}\\
  &\leq2 c_W ^2   \intO{ \rho u^2}  + \frac{1}{4}  \intO{\rho |\pa_x \delta \calF(\rho)|^2}.
		\end{align*}
	Next, we estimate $J_{2,2}$ to have
	\begin{align*}
		&\iintO{ \phi(x-y)|u(x) - u(y)| |\pa_x \delta \calF(\rho)(x)| \rho(x)\rho(y) }  \\
		&\qquad= \iintO{ \sqrt{\phi(x-y)}|u(x) - u(y)| \sqrt{\rho(x)\rho(y)}   \sqrt{ \phi(x-y) \rho(y)} \sqrt{\rho(x)}|\pa_x \delta \calF(\rho)(x)|  }  \\
		&\qquad\leq \Vert \phi \ast \rho \Vert_{L^\infty} \calK_{\phi}(\vr,u) + \frac{1}{4}\intO{\rho |\pa_x \delta \calF(\rho)|^2}.
	\end{align*}
Together, this gives
\[
J_2 \leq 2c_W^2  \intO{ \vr \vert u \vert^2 } +  \frac12   \intO{\rho |\pa_x \delta \calF(\rho)|^2} + (1+c_{\phi,W} )^2 \Vert \phi \ast \rho \Vert_{L^\infty}  \calK_{\phi}(\vr,u).
\]
For the term $J_3$, we observe that the part corresponding to $\varphi +(\pa_{xx}\tw)^+$ is nonpositive. Hence, we only need to control the contribution due to $(\pa_{xx}\tw)^-$;
we use \[
-c_{\phi, W} \phi \leq -\bigl(\pa_{xx} \tw\bigr)^{-} \leq	\phi + \pa_{xx}\tw
\]
to obtain
\[
J_3 \leq \frac12 c_{\phi, W} \calK_{\phi}(\vr,u).
\] 
Combining the estimate above yields
\begin{align*}
	\Dt \lt(\Emod(\rho,u) +\tau\calF(\rho)\rt)
 &\leq - \frac12\intO{\rho |\pa_x \delta \calF(\rho)|^2} -(\tau- 2c_W^2)\intO{\rho u^2} \cr
 &\hspace{6em}+ \lt(\frac12 c_{\phi, W} + (1 + c_{\phi, W})^2  \|\phi * \rho\|_{L^\infty}\rt)\calK_\phi(\rho,u).
	\end{align*}
	Consequently,
	\begin{align*}
		\Dt \lt(\Emod +\tau\calF(\rho) + \lambda \calE \rt) 
  &\leq - \frac12\intO{\rho |\pa_x \delta \calF(\rho)|^2}  - \ell_{\tau,\lambda}\intO{\rho u^2}  \cr
  &\hspace{6em} -\lambda \intO{\mu(\rho)|\pa_x u|^2}- c_\lambda  \calK_\phi(\rho,u).
	\end{align*}
	We finally choose $\lambda > 0$ large enough so that 
	\[
	   \ell_{\tau,\lambda}\coloneq(1+\lambda)\tau-2c_W^2>0,\qquad c_\lambda\coloneq \frac\lambda 2- \lt(\frac12 c_{\phi, W} + (1 + c_{\phi, W})^2\|\phi * \rho\|_{L^\infty}\rt) > 0,
	\]
  to conclude the desired result. 
\end{proof}

\section{Existence of regular solutions to the approximate system} \label{Sec:existence}

The goal of this section is to discuss the existence and uniqueness of an approximate system and to deduce the functional inequalities seen in Section~\ref{Sec:apriori} for the solution of the approximate system. 

\subsection{The approximate system} We approximate our system on the whole space $\R$ by a family of problems on a bounded domain $\Omega_\ep\coloneq(-R_\ep,R_\ep)\subset\R$, $\ep>0$, $R_\ep>0$ and look for regular solutions $(\rho^{\ep},u^{\ep})$ to the approximate system, where the dependence of $R_\ep>0$ on $\ep>0$ will be clarified later.
\begin{subequations}\label{app_eq}
	\begin{align}
		\pa_t \rho + \pa_x (\rho u) &= 0, \qquad (t,x) \in (0,T)\times\Om_\ep,\label{app_eq1}\\
		\pa_t (\rho u) + \pa_x (\rho u^2) &= - \rho \pa_x \delta\calF_\ep(\rho) + \pa_x (\mu_\ep(\rho)\pa_x u) + \rho [\phi_\ep *^\ep, u]\rho -\tau\rho u, \label{app_eq2}
	\end{align}
\end{subequations}
where
\[
\begin{gathered}	\varphi_\ep(\rho)\coloneq\varphi(\rho)+\frac{\ep^\beta \alpha}{\alpha-1}\rho^{\alpha-1}, \quad \alpha\in(0,\sfrac12), \quad \beta= \frac{\gamma-\alpha}{\gamma-\sfrac12} +1 ,\quad \mu_\ep(\rho)\coloneq \rho^2\varphi_\ep'(\rho) = \mu(\vr)+\ep^\beta\alpha\rho^\alpha ,\\
	\calF_\ep(\rho) \coloneq \intM{\int_0^{\rho(x)}\varphi_\ep(r)\d r} + \frac12 \intM{\rho(x) W *^\ep \rho(x)},\qquad \delta \calF_\ep(\rho) = \varphi_\ep(\rho) + W *^\ep \rho,\\
	W*^\ep\rho \coloneq\int_{\Omega_\ep} W(\cdot-y)\rho(y)\, \dy,\qquad [\phi_\ep *^\ep, u]\rho \coloneq\int_{\Omega_\ep}\phi_\ep(\cdot -y)(u(y)-u(\cdot))\rho(y)\, \dy.
\end{gathered}
\]
Here, $\phi_\ep$ is a regularized symmetric communication weight satisfying
\[
     \phi_\ep \to \phi \quad\text{in\;\;$L^{\gamma^\prime}$},\quad \gamma^\prime = \frac{\gamma}{\gamma-1}.
\]
To simplify notation, we drop the upper-indexes $\ep$ in all places corresponding to the sequence and only keep it to denote the truncation of the convolution terms. Note that both $\rho$ and $u$ are only defined on $\Omega_\ep$ while the interaction potential $W$ and communication weight $\phi_\ep$ are defined on $\R$.

We further associate the following functionals to the regularized system:
\begin{align*}
\calJ_{\tau,\lambda,\ep}(\rho,u) &= \Emod_\e(\rho,u) + \tau\calF_\e(\rho) + \lambda\calE_\e(\rho,u),\\
\calD_{\tau,\lambda,\ep}(\rho,u) &= \frac12\intM{ \rho |\pa_x \delta \calF_\ep(\rho)|^2} + \ell_{\tau,\lambda}\intM{\rho|u|^2} + \lambda \intM{ \mu_\ep(\rho)|\pa_x u|^2} + c_\lambda\calK_{\phi^\ep}(\rho,u),
\end{align*}
with the driving functional
\[
\calF_\ep(\rho) = \frac1{\gamma-1}\intM{\rho^\gamma} + \frac12 \intM{ \rho\, W *^\e \rho},
\]
and the total and modulated energies
\begin{align*}
\calE_\e(\rho,u) =  \frac12\intM{\rho |u|^2}  + \calF_\ep(\rho),\qquad
\Emod_\e(\rho,u) = \frac12\intM{  \rho |u + \pa_x \delta \calF_\ep(\rho)|^2} + \calF_\e(\rho).
\end{align*}

\subsection{The approximate initial data}

Here, we discuss the approximation of the initial data and show the convergences of the approximated energy functionals. 

We first recall our main assumptions on the initial data: 	
\begin{align}\label{ini_app_app}
	\begin{gathered}
		\vr_0\geq 0, \quad \vr_0\in L^1(\R)\cap L^\infty(\R), \quad 
		\partial_x(\vr_0^{\gamma-\frac{1}{2}})\in L^2(\R),\quad \vr_0 u_0^2\in L^1(\R)  \\
		|x|^{\kappa+2}\vr_0\in L^1(\R),\quad \vr_0|u_0|^{2+\kappa}\in L^1(\R),\qquad 0<\kappa\leq\min\left\{2\gamma-1,\frac2\gamma\right\}.
	\end{gathered}
\end{align}
Motivated by \cite{CHWY2023},  we approximate our initial density $\rho_0$ by
  \begin{align}\label{id-d-1}
	\vr_0^{\ep} \coloneq \frac{\frkm}{Z_\ep} \mathbf{1}_{\overline\Omega_\ep} \widetilde{\vr}_0^{\ep},\qquad \widetilde{\vr}_0^{\ep}\coloneq \frac{\frkm}{Z_\ep}\mathbf{1}_{\overline\Omega_\ep}\lr{\rho_0^{\gmhalf} \ast \eta_{\ep^{\gmhalf}} +\ep e^{-x^2} }^{\frac{1}{\gmhalf}},
\end{align}
where $Z_\e$ is the normalizing constant, i.e.
\[
Z_\e = \int_\R  \widetilde{\vr}_0^{\ep} \mathbf{1}_{\overline\Omega_\ep} \d x,
\]
and $\Om_\e = (-R_\e, R_\e)$ with $0 < R_\e \nearrow +\infty$ as $\e \to 0$ which will be determined appropriately later. Note that $\widetilde{\vr}_0^\ep \in C^\infty(\R)$ and $\widetilde{\vr}_0^{\ep}  >0$ for all $x \in \R$.

For the approximation of the momentum $m_0 = \rho_0 u_0$, we introduce
\[
w_0 \coloneq \sqrt{\rho_0} u_0 \quad \mbox{and} \quad \tilde{w}_0 \coloneq \rho_0^{\frac{1}{2+\kappa}}. 
\]
Note that 
\[
    w_0 \in L^2(\R), \quad \tilde{w}_0 \in L^{2+\kappa}(\R), \quad \text{and} \quad m_0 \in L^{\frac{2\gamma}{\gamma+1}} (\R)
\]
due to \eqref{ini_app_app}. We next consider an approximation sequence $\tilde{w}_{0}^\ep \in C_c^\infty(\R)$ such that 
\begin{equation}\label{app_tw0}
\tilde{w}_0^\ep \rightarrow \tilde{w}_0 \text{ in } L^{2+\kappa}(\R) . 
\end{equation}
Using this newly defined approximation  $\tilde{w}_{0}^\e$, we define our approximated initial velocity by
\begin{equation}\label{id-v-1}
{u}_0^\ep \coloneq \frac{ \tilde{w}_0^\ep  }{ {(\rho_0^\ep)}^{\frac{1}{2+\kappa}} }\xi_\ep, 
\end{equation}
with $\xi_\e \in C^\infty(\R)$ satisfying
\[
{\bf 1}_{(\frac{-R_\e}{4}, \frac{R_\e}{4})} \leq \xi_\e \leq {\bf 1}_{(\frac{-R_\e}{2}, \frac{R_\e}{2})}.
\]
Note that $u^\e_0 \in C^\infty_c(\Omega_\e)$. Thus, the approximated initial velocity satisfies the following homogeneous Dirichlet boundary condition:
\equ{
	u=0\quad\text{on\;\;$\partial\Omega_\ep$}.\label{app:bc}
}
Naturally, the approximation of $w_0$ is given by
\begin{align*}
    w_0^\ep \coloneq  {(\rho_0^\ep)}^{\frac{1}{2}-\frac{1}{2+\kappa}} \tilde{w}_0^\ep \xi_\ep = \sqrt{\vr_0^\ep} u_0^\ep.
\end{align*}

For the sequence of approximations $(\rho_0^\ep,u_0^\ep)$ constructed above, we have the following result.

\begin{proposition}\label{prop_convJ} Let $R_\e > 0$ be given by
\begin{align}\label{choiceR_ep}
    R_\ep= a |\log \ep |^{\theta} \text{ with } 0<\theta <\min\left\{ \frac{1}{2}, \frac{3}{4(\gamma-1) \widetilde{\kappa}} \right\} \;\text{ and }  a = \sqrt{\frac{\gmhalf}{2(\gamma-\alpha)}} > 0,
\end{align}
for $\e < e^{-1}$, where $\widetilde{\kappa} \coloneq(2+\kappa) \frac{\gmhalf}{\gamma-1} +1$. Then we have
\[
\limsup_{\e \to 0}\int_{-R_\e}^{R_\e}{\vr_0^{\ep} |{u}_0^\ep|^{2+\kappa} } \d x \leq \intO{\vr_0 |{u}_0|^{2+\kappa} } \quad \mbox{and} \quad \calJ_{\tau,\lambda,\e}(\rho^\e_0,u^\e_0) \to \calJ_{\tau,\lambda}(\rho_0,u_0) \quad \mbox{as} \quad \e \to 0.
\]
\end{proposition}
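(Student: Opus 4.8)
The plan is to treat the two assertions in turn: the $(2+\kappa)$-moment bound reduces to a one-line algebraic identity, while the convergence of $\calJ_{\tau,\lambda,\ep}$ is a term-by-term passage to the limit, once a handful of convergences for the approximate density are in hand. For the first assertion, the definition \eqref{id-v-1} of $u_0^\ep$ gives, on $\Omega_\ep$, the identity $\rho_0^\ep|u_0^\ep|^{2+\kappa}=|\tilde w_0^\ep|^{2+\kappa}\xi_\ep^{2+\kappa}$, so that, since $0\le\xi_\ep\le1$,
\[
\int_{-R_\ep}^{R_\ep}\rho_0^\ep|u_0^\ep|^{2+\kappa}\,\dx\le\int_\R|\tilde w_0^\ep|^{2+\kappa}\,\dx,
\]
and the right-hand side tends to $\int_\R|\tilde w_0|^{2+\kappa}\,\dx=\intO{\rho_0|u_0|^{2+\kappa}}$ by \eqref{app_tw0}; taking $\limsup$ gives the first claim. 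The same identity with exponent $2$, i.e.\ $\rho_0^\ep|u_0^\ep|^2=(\rho_0^\ep)^{\kappa/(2+\kappa)}|\tilde w_0^\ep|^2\xi_\ep^2$, combined with H\"older's inequality in the conjugate exponents $\tfrac{2+\kappa}{\kappa}$, $\tfrac{2+\kappa}{2}$, will furthermore give $\sqrt{\rho_0^\ep}\,u_0^\ep=(\rho_0^\ep)^{\frac12-\frac1{2+\kappa}}\tilde w_0^\ep\xi_\ep\to\sqrt{\rho_0}\,u_0$ in $L^2(\R)$ and $\intM{\rho_0^\ep|u_0^\ep|^2}\to\intO{\rho_0|u_0|^2}$; these facts are used below.

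For the convergence of $\calJ_{\tau,\lambda,\ep}$ I would first record the following properties of the approximate density, consequences of the construction \eqref{id-d-1} and the choice \eqref{choiceR_ep} of $R_\ep$: (i) $Z_\ep\to\frkm$, hence $\frkm/Z_\ep\to1$; (ii) $\rho_0^\ep\to\rho_0$ strongly in $L^p(\R)$ for every $p\in[1,\infty)$ and a.e.\ up to a subsequence, with $\sup_\ep\|\rho_0^\ep\|_{L^\infty}\lesssim\|\rho_0\|_{L^\infty}$; (iii) $\px\big((\rho_0^\ep)^{\gmhalf}\big)\to\px\big(\rho_0^{\gmhalf}\big)$ strongly in $L^2(\R)$; (iv) $\sup_\ep\intM{|x|^{\kappa+2}\rho_0^\ep}<\infty$; and (v) $\ep^\beta\intM{(\rho_0^\ep)^\alpha}\to0$ and $\ep^\beta\px\big((\rho_0^\ep)^{\alpha-\frac12}\big)\to0$ in $L^2(\R)$. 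Items (i)--(iv) follow from standard mollification estimates applied to the bounded function $\rho_0^{\gmhalf}$, from $\px(\rho_0^{\gmhalf})\in L^2(\R)$, and from the moment hypotheses in \eqref{ini_app_app} together with the slow growth $R_\ep=a|\log\ep|^\theta$; item (v) is the subtle point, discussed at the end.

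Granting (i)--(v), I would decompose
\begin{align*}
\calJ_{\tau,\lambda,\ep}(\rho_0^\ep,u_0^\ep)&=\tfrac{1+\lambda}{2}\intM{\rho_0^\ep|u_0^\ep|^2}+\intM{\rho_0^\ep u_0^\ep\,\px\delta\calF_\ep(\rho_0^\ep)}\\
&\quad+\tfrac12\intM{\rho_0^\ep|\px\delta\calF_\ep(\rho_0^\ep)|^2}+(1+\tau+\lambda)\,\calF_\ep(\rho_0^\ep)
\end{align*}
and pass to the limit in each summand. The kinetic term is handled as above. Writing $\calF_\ep(\rho_0^\ep)=\tfrac1{\gamma-1}\intM{(\rho_0^\ep)^\gamma}+\tfrac{\ep^\beta}{\alpha-1}\intM{(\rho_0^\ep)^\alpha}+\tfrac12\intM{\rho_0^\ep\,W*^\ep\rho_0^\ep}$, the first term converges by (ii), the second vanishes by (v), and for the third one uses $W=-|x|+\tw$ together with (ii), (iv) and $|\px\tw|\le l_W(1+|x|)$ to get $W*^\ep\rho_0^\ep\to W*\rho_0$ with the moment-weighted bounds needed to pass to the limit (the $-|x|$ part being controlled by the second moments). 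For the modulation term, a direct computation gives
\begin{align*}
\sqrt{\rho_0^\ep}\,\px\delta\calF_\ep(\rho_0^\ep)&=\tfrac{\gamma}{\gmhalf}\,\px\big((\rho_0^\ep)^{\gmhalf}\big)+\tfrac{\ep^\beta\alpha}{\alpha-\frac12}\,\px\big((\rho_0^\ep)^{\alpha-\frac12}\big)\\
&\quad+\sqrt{\rho_0^\ep}\,\px W*^\ep\rho_0^\ep,
\end{align*}
so by (iii), (v) and the interaction estimate the right-hand side converges in $L^2(\R)$ to $\tfrac{\gamma}{\gmhalf}\px(\rho_0^{\gmhalf})+\sqrt{\rho_0}\,\px W*\rho_0=\sqrt{\rho_0}\,\px\delta\calF(\rho_0)$; squaring and integrating gives the limit $\tfrac12\intO{\rho_0|\px\delta\calF(\rho_0)|^2}$. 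Finally the cross term is $\intM{(\sqrt{\rho_0^\ep}\,u_0^\ep)(\sqrt{\rho_0^\ep}\,\px\delta\calF_\ep(\rho_0^\ep))}$, a product of two factors converging strongly in $L^2(\R)$, hence it tends to $\intO{\rho_0 u_0\,\px\delta\calF(\rho_0)}$. Summing the limits yields $\calJ_{\tau,\lambda,\ep}(\rho_0^\ep,u_0^\ep)\to\calJ_{\tau,\lambda}(\rho_0,u_0)$.

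The main obstacle is property (v). The approximating internal energy carries the term $\tfrac{\ep^\beta\alpha}{\alpha-1}\rho^{\alpha-1}$ with $\alpha\in(0,\sfrac12)$, which is singular as $\rho\to0$; after the identity above it contributes $\tfrac{\ep^\beta\alpha}{\alpha-\frac12}\px\big((\rho_0^\ep)^{\alpha-\frac12}\big)$ to $\sqrt{\rho_0^\ep}\,\px\delta\calF_\ep(\rho_0^\ep)$, and $(\rho_0^\ep)^{\alpha-\frac12}$ is largest precisely where the truncated mollification of $\rho_0^{\gmhalf}$ is dominated by the small Gaussian floor $\ep e^{-x^2}$, i.e.\ near the expanding boundary $\partial\Omega_\ep$, where $\rho_0^\ep$ is only of order $\ep\,e^{-R_\ep^2}$. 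Controlling it requires quantifying how small $\rho_0^\ep$ and how large $\px\big((\rho_0^\ep)^{\alpha-\frac12}\big)$ are near $\partial\Omega_\ep$, and then checking that the calibrated choices $a=\sqrt{\gmhalf/(2(\gamma-\alpha))}$, $\beta=\tfrac{\gamma-\alpha}{\gmhalf}+1$ and the slow growth $R_\ep=a|\log\ep|^\theta$, $\theta<\tfrac12$, make the resulting bounds---fixed powers of $\ep$ against exponentials of $R_\ep^2$---tend to zero; the same calibration also underlies (i) and (iv). The only other nontrivial technical point is the limit passage in the interaction energy on the unbounded, $\ep$-dependent domains against the linearly growing potential $W=-|x|+\tw$, which is handled by the uniform second-moment bound (iv) and the corresponding moment assumption on $\rho_0$ in \eqref{ini_app_app}.
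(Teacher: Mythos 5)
Your plan has the same architecture as the paper's Appendix~\ref{app:initial} proof (velocity-moment bound directly from \eqref{id-v-1} and \eqref{app_tw0}; term-by-term limit of $\calJ_{\tau,\lambda,\ep}$ after expanding the modulated square; the cross term as a product of strongly $L^2$-convergent factors), but it stops short of proving the one estimate where the hypotheses \eqref{choiceR_ep} actually enter, namely your item (v). You correctly flag it as ``the main obstacle'', yet you only describe what would have to be checked; that check is the essential content of the proposition, since it is exactly where the specific $a$, $\beta$ and $\theta<\tfrac12$ are used. The paper closes it in a few lines: since $\alpha<\tfrac12<\gamma$, one has $\rho_0^\ep\,|\px(\rho_0^\ep)^{\alpha-1}|^2=c_\alpha\,(\rho_0^\ep)^{2(\alpha-\gamma)}\,|\px(\rho_0^\ep)^{\gamma-\frac12}|^2$ (up to the normalisation $Z_\ep^{1-2\gamma}$), the negative power is bounded pointwise via the floor $\rho_0^\ep\ge c\,(\ep e^{-R_\ep^2})^{1/(\gamma-\frac12)}$ from \eqref{lb}, the factor $\px((\tilde\rho_0^\ep)^{\gamma-\frac12})$ is bounded in $L^2$ uniformly in $\ep$, and finally the calibration gives
\begin{equation*}
\ep^{2\beta}\bigl(\ep e^{-R_\ep^2}\bigr)^{\frac{2(\alpha-\gamma)}{\gamma-\frac12}}=\ep^{2}\exp\bigl(|\log\ep|^{2\theta}\bigr)\le \ep\longrightarrow 0,
\end{equation*}
which is \eqref{conv_Re_1}. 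Without carrying out this (or an equivalent) computation, the proof is incomplete.

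A secondary under-justified step: your items (i)--(iv) are not ``standard mollification'' when $\gamma-\tfrac12>1$, because $s\mapsto s^{1/(\gamma-\frac12)}$ is then only H\"older continuous, so $L^1(\R)$ and weighted-$L^1$ convergence of $\rho_0^\ep$ do not follow directly from convergence of the mollified $(\gamma-\tfrac12)$-powers. The paper's Lemma~\ref{id-l1} handles this by splitting at a threshold $\delta$, paying the measure of $\Omega_\ep$ (respectively a factor $R_\ep^{\widetilde\kappa}$ for the $(2+\kappa)$-moment), using the quantitative mollification rate $\|\rho_0^{\gamma-\frac12}\ast\eta_{\ep^{\gamma-\frac12}}-\rho_0^{\gamma-\frac12}\|_{L^2}\le C\ep^{\gamma-\frac12}$, and optimizing $\delta=\ep^{3/(4(\gamma-\frac12))}$; this is where the quantity $\ep^{3/(4(\gamma-1))}R_\ep^{\widetilde\kappa}$ of \eqref{conv_Re_0} and the second constraint on $\theta$ originate. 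Your treatment of the first assertion and of the kinetic and cross terms coincides with the paper's Lemma~\ref{id-l2} and final assembly, so once (ii)--(v) are actually established (not merely asserted) your argument would go through.
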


The proof of Proposition \ref{prop_convJ} is rather technical and lengthy, thus for smoothness of reading, we postpone the details of proof to Appendix \ref{app:initial}.

\subsection{Formulation of the main result}

The goal of the rest of this section is to discuss the existence of a global-in-time regular solution to the approximate system \eqref{app_eq} for fixed $\ep>0$. 
We will frequently make use of the following notation for function spaces:
\begin{equation} \label{def:spaces}
\begin{aligned}
	& \mathcal{X}_k(T)\coloneq\{ f\in L^\infty(0,T;H^k(\Omega_\ep)): \; \pa_{t} f \in L^\infty(0,T;H^{k-1}(\Omega_\ep)\},\\
	& \mathcal{Y}_k(T)\coloneq\{ g \in L^2(0,T;H^{k+1}\cap H^1_0(\Omega_\ep) \cap L^\infty(0,T;H^k(\Omega_\ep) : \pa_{t} g \in L^2(0,T;H^{k-1}(\Omega_\ep) \}.
\end{aligned}
\end{equation}

The main result in this section is summarized in the following statement.

	\begin{theorem}\label{Th:3}
	Let $\ep>0$ be fixed and $(\rho_{0}^\ep, u_{0}^\ep)$ be given by \eqref{id-d-1} and \eqref{id-v-1} with 
	\begin{align}\label{R-lb}
		\min_{x\in \overline\Omega_\ep }   \rho_0^{\ep} = \underline{\rho}^\ep >0  .
	\end{align} 
	Then, for any $ T>0 $, there exists a unique global-in-time regular solution $(\rho^\ep,u^\ep) $ to the system \eqref{app_eq} with initial data $ (\rho_{0}^\ep, u_{0}^\ep)$ and boundary condition \eqref{app:bc} such that 
	\begin{align}%\label{app-1}
		(\rho^\ep ,u^\ep)\in  \mathcal{X}_2(T)\times \mathcal{Y}_2(T)\quad\text{with}\quad	\rho^\ep(t,x) \ge c\underline{\rho}^\ep >0\quad\text{for}\;\;(t,x)\in [0,T]\times \overline\Omega_\ep,\nonumber
	\end{align}
 where $c>0$ is an $\ep$-independent constant.
	Moreover, there exists $\lambda>0$ such that the solution satisfies the estimate  
    \equ{\label{est:BD_app}
		\Dt\calJ_{\tau,\lambda,\ep}(\rho^\ep,u^\ep) \le -\calD_{\tau,\lambda,\ep}(\rho^\ep,u^\ep) + a_\ep\left(\intM{ \rho |\pa_x \delta \calF_\ep(\rho)|^2}+\intM{\rho^\ep|u^\ep|^2}\right),
	}
    with $a_\ep\to 0$ as $\ep\to 0$.

    In the case when either $\tau>0$ or $\phi$ is uniformly bounded from below, we find $\lambda>0$ and $\ep_0>0$ such that for every $\ep\in(0,\ep_0)$:
    \equ{\label{est:BD_app_uniform}
		\Dt\calJ_{\tau,\lambda,\ep}(\rho^\ep,u^\ep) \le - (1-\tilde a_\ep)\calD_{\tau,\lambda,\ep}(\rho^\ep,u^\ep),
	}
    with $\calD_{\tau,\lambda,\ep}\ge 0$ for every $\ep\in(0,\ep_0)$ and $\tilde a_\ep \to 0$ as $\ep\to 0$.
    
\end{theorem}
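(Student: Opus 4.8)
The plan is to treat the existence/uniqueness of the regular approximate solution and the differential inequalities essentially separately. For the existence of $(\rho^\ep,u^\ep)\in\mathcal{X}_2(T)\times\mathcal{Y}_2(T)$ on a fixed bounded interval $\Omega_\ep$, I would run an iterative (Picard-type) scheme in the spirit of Constantin--Spirito--Sodini \cite{CDS20}: given $u^{(n)}$, solve the transport equation \eqref{app_eq1} for $\rho^{(n+1)}$ by the method of characteristics, which preserves positivity and yields the lower bound $\rho^{(n+1)}\ge c\,\underline\rho^\ep>0$ as long as $\|\partial_x u^{(n)}\|_{L^\infty}$ is controlled on $[0,T]$ (Gr\"onwall on $\rho\circ X_t$), together with $H^2$ estimates by differentiating the transport equation. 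Then solve the momentum equation \eqref{app_eq2} for $u^{(n+1)}$ as a (uniformly, thanks to $\mu_\ep(\rho)\ge \ep^\beta\alpha(\,\underline\rho^\ep)^\alpha>0$) parabolic equation with Dirichlet data \eqref{app:bc}, obtaining $u^{(n+1)}\in\mathcal{Y}_2(T)$ by standard parabolic regularity; the nonlocal terms $W*^\ep\rho$, $\phi_\ep*^\ep(\cdot)$ are harmless bounded lower-order perturbations on the bounded domain since $W$ is continuous and $\phi_\ep\in L^{\gamma'}$. Contraction in a low-norm space ($L^\infty_t L^2_x$ for $u$, $L^\infty_t H^1_x$ for $\rho$) on a short time interval, combined with the uniform-in-$n$ higher-norm bounds, gives a local solution; the a-priori bounds from the energy/BD estimates (next paragraph) prevent blow-up of the relevant norms and of $\|\partial_x u\|_{L^\infty}$, so the solution extends to arbitrary $T>0$. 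Uniqueness follows from the same contraction estimate via Gr\"onwall.

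For the differential inequality \eqref{est:BD_app}, I would repeat verbatim the computation of Lemma~\ref{lem:bd-estimate}, now on $\Omega_\ep$ and with the regularized objects $\varphi_\ep,\mu_\ep,W*^\ep,\phi_\ep*^\ep$. The boundary terms produced by the integrations by parts vanish because $u=0$ on $\partial\Omega_\ep$ and $\mu_\ep(\rho)\partial_x u$, $\rho u$ vanish or are controlled there; the structural identity $\mu_\ep(\rho)=\rho^2\varphi_\ep'(\rho)$ is exactly what makes the key cancellation $\rho(\partial_x(\varphi_\ep'(\rho)\rho\,\partial_x u)+(\partial_x u)(\partial_x\varphi_\ep))=\partial_x(\mu_\ep(\rho)\partial_x u)$ go through unchanged. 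The only genuinely new feature is the extra term $\frac{\ep^\beta\alpha}{\alpha-1}\rho^{\alpha-1}$ in $\varphi_\ep$ and $\ep^\beta\alpha\rho^\alpha$ in $\mu_\ep$: tracking it through the $I_2+I_3$ manipulation produces additional contributions which, using $0<\alpha<\tfrac12$, $\beta=\tfrac{\gamma-\alpha}{\gamma-1/2}+1$ and the uniform $L^1\cap L^\gamma$ control of $\rho$ from \eqref{energy0} (which still holds since $W$ is bounded below and $\phi_\ep*\rho$ is uniformly bounded by the same $L^{\gamma'}$--$L^\gamma$ / $L^\infty$--$L^1$ splitting as in the proof of Lemma~\ref{lem:bd-estimate}), can all be absorbed into a small multiple of $\intM{\rho|\partial_x\delta\calF_\ep(\rho)|^2}+\intM{\rho|u|^2}$ with coefficient $a_\ep\to 0$; here one also uses $\phi_\ep\to\phi$ in $L^{\gamma'}$ to handle $\|\phi_\ep*\rho\|_{L^\infty}$ and the difference between $\calK_{\phi_\ep}$ and $\calK_\phi$. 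This yields \eqref{est:BD_app} after choosing $\lambda$ as in Lemma~\ref{lem:bd-estimate}.

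Finally, for \eqref{est:BD_app_uniform} under the hypothesis $\tau>0$ or $\phi\ge\underline\phi>0$: in Lemma~\ref{lem:bd-estimate} one gets $\ell_{\tau,\lambda}=(1+\lambda)\tau-2c_W^2$ and $c_\lambda=\tfrac\lambda2-(\tfrac12 c_{\phi,W}+(1+c_{\phi,W})^2\|\phi*\rho\|_{L^\infty})$, both made strictly positive by taking $\lambda$ large. For the approximate system the same $\lambda$ works with $\|\phi_\ep*\rho\|_{L^\infty}$ in place of $\|\phi*\rho\|_{L^\infty}$, and since $\sup_\ep\|\phi_\ep*\rho^\ep\|_{L^\infty}$ is bounded uniformly in $\ep$ (again by the $L^{\gamma'}$ convergence and the uniform $L^1\cap L^\gamma$ bound on $\rho^\ep$), $c_\lambda$ can be kept bounded below by a positive constant for all small $\ep$. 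Then \eqref{est:BD_app} reads $\Dt\calJ_{\tau,\lambda,\ep}\le -\calD_{\tau,\lambda,\ep}+a_\ep(\cdots)$ and, because $\ell_{\tau,\lambda}>0$ and $c_\lambda>0$, the parenthetical error is itself bounded by $C\calD_{\tau,\lambda,\ep}$ with $C$ uniform in $\ep$; absorbing it gives \eqref{est:BD_app_uniform} with $\tilde a_\ep=Ca_\ep\to 0$, and $\calD_{\tau,\lambda,\ep}\ge 0$ once $\ep<\ep_0$ is small enough that $\ell_{\tau,\lambda}$ and $c_\lambda$ remain positive. The main obstacle throughout is bookkeeping: making sure every $\ep$-correction term (both from $\varphi_\ep$ and from $\phi_\ep$ vs.\ $\phi$) is genuinely $o(1)$ times a dissipation term rather than merely bounded, which is precisely where the exponent $\beta=\tfrac{\gamma-\alpha}{\gamma-1/2}+1$ and the range $\alpha\in(0,\tfrac12)$ are used; the existence part is comparatively routine given the degenerate-viscosity literature cited.
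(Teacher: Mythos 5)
Your overall plan matches the paper's: an iterative scheme à la Constantin et al.\ for local existence (the paper uses the construction of \cite{CDNP20}, not \cite{CDS20}, but the idea is the same), the BD computation of Lemma~\ref{lem:bd-estimate} redone on $\Omega_\ep$ with regularized coefficients to get \eqref{est:BD_app}, and then the absorption argument for \eqref{est:BD_app_uniform} when $\ell_{\tau,\lambda}>0$. Two places are worth sharpening.

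First, you mis-locate the source of the error $a_\ep$. You write that the $\ep^\beta$-corrections in $\varphi_\ep$ and $\mu_\ep$ ``produce additional contributions'' that must be absorbed, but this is precisely what the approximation is designed to avoid: since by construction $\mu_\ep(\rho)=\rho^2\varphi_\ep'(\rho)$ holds exactly, the key identity $\rho\bigl(\partial_x(\varphi_\ep'(\rho)\rho\,\partial_x u)+(\partial_x u)\partial_x\varphi_\ep(\rho)\bigr)=\partial_x(\mu_\ep(\rho)\partial_x u)$ is exact and the cancellation in $I_2+I_3$ goes through with no remainder. The error term $\calR_\ep$ in the paper is attributed \emph{entirely} to replacing $\phi$ by $\phi_\ep$ --- concretely, Assumption~$(\mathcal{A}_W)$ bounds $(\partial_{xx}\tilde W)^-$ by $c_{\phi,W}\phi$ rather than by $c_{\phi,W}\phi_\ep$, so splitting $\phi_\ep=\phi+(\phi_\ep-\phi)$ produces the remainder, bounded by $3c_\rho\|\phi_\ep-\phi\|_{L^{\gamma'}}$ times $\intM{\rho|u|^2}+\intM{\rho|\partial_x\delta\calF_\ep(\rho)|^2}$. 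The exponent $\beta$ and the range $\alpha\in(0,\sfrac12)$ are \emph{not} what make $a_\ep\to 0$; they are used elsewhere, namely to ensure $\ep^\beta R_\ep^{1-\alpha}\to 0$ for the uniform $L^\gamma$ bound and, crucially, to derive the time-uniform positive lower bound on the density.

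Second, on the density lower bound and global extension: you propose deriving $\rho^{(n+1)}\ge c\,\underline\rho^\ep$ via characteristics and Gr\"onwall, which requires controlling $\|\partial_x u\|_{L^\infty}$ on $[0,T]$ and yields a bound that degrades in $T$. The paper instead obtains a $T$-uniform lower bound purely from the a-priori BD estimate (Corollary~\ref{cor_bddr}): the modulated energy bounds $\ep^{2\beta}\|\partial_x\rho^{\alpha-\sfrac12}\|_{L^\infty_t L^2_x}^2$, and combined with mass conservation (which forces $\rho(t,x_\ep(t))=\tfrac{1}{2R_\ep}$ at some $x_\ep(t)$) this yields $\rho^{-1}\le c_\ep$ uniformly on $[0,T_{\max}]$, independently of $T_{\max}$. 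This is the mechanism that actually lets the local solution be continued to arbitrary $T$ without needing $\|\partial_x u\|_{L^\infty}$ control, and it is the only place where the $\ep^\beta\rho^\alpha$ piece of $\mu_\ep$ is really used. Your characteristics argument is fine within a single step of the iteration but does not by itself give what is needed to prevent vacuum formation on $[0,T_{\max}]$; flagging the BD-based bound as the key ingredient would close that gap. Otherwise your treatment of the second part (choosing $\lambda$ so $\ell_{\tau,\lambda}>0$ and $c_\lambda>0$ with $\sup_\ep\|\phi_\ep*\rho^\ep\|_{L^\infty}$ bounded, then absorbing the $a_\ep$-terms into $\calD_{\tau,\lambda,\ep}$ for $\ep<\ep_0$) is exactly the paper's argument.
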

The rest of this section is devoted to the proof of this theorem. Although local existence theory for regular solutions to system \eqref{app_eq} without the non-local terms is by now classical, it does not directly apply to our case. For this reason, we outline the proof of the following result in Appendix \ref{AppendixA}. 

\begin{proposition}\label{app-lt}
Let $(\rho_{0}^{\ep}, u_{0}^{\ep})$ be given by \eqref{id-d-1}, \eqref{id-v-1} and satisfy \eqref{R-lb}.  Moreover, set
\begin{align}%\label{def:A}
	A_\ep\coloneq \Vert \rho_0^{\ep} \Vert_{H^3(\Omega_\ep)}+ \Vert u_0^{\ep} \Vert_{H^3(\Omega_\ep)} <+\infty.\nonumber
\end{align}
Then, there exist some $T_0=T_0(\ep,A_\ep,\underline{\rho}^\ep)>0$ and a unique regular solution $(\rho^{\ep}, u^{\ep})$ to \eqref{app_eq} on the interval $[0,T_0)$.
\end{proposition}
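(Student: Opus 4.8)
The plan is to construct the solution by an iterative (Picard-type) scheme, following the strategy of Constantin, Drivas and Shvydkoy \cite{CDS20} for density-degenerate viscous models, suitably modified to accommodate the nonlocal terms $W*^\ep\rho$ and $\phi_\ep*^\ep$. First I would set up the iteration: given $(\rho^n,u^n)$ with $\rho^n$ bounded below by a positive constant and both in the relevant $H^3$-in-space spaces, define $\rho^{n+1}$ as the solution of the linear transport equation $\pa_t\rho^{n+1}+\pa_x(\rho^{n+1}u^n)=0$ with initial datum $\rho_0^\ep$, and define $u^{n+1}$ as the solution of the linear parabolic equation
\equ{\nonumber
\rho^{n+1}\pa_t u^{n+1}+\rho^{n+1}u^n\pa_x u^{n+1}-\pa_x(\mu_\ep(\rho^{n+1})\pa_x u^{n+1})
= -\rho^{n+1}\pa_x\delta\calF_\ep(\rho^{n+1})+\rho^{n+1}[\phi_\ep*^\ep,u^n]\rho^{n+1}-\tau\rho^{n+1}u^n,
}
with the homogeneous Dirichlet boundary condition \eqref{app:bc} and initial datum $u_0^\ep$. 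The transport step is standard: by the method of characteristics one gets $\rho^{n+1}(t,x)=\rho_0^\ep(X_t^{-1}(x))\exp(-\int_0^t\pa_x u^n)$, which immediately yields both a lower bound $\rho^{n+1}\ge c\,\underline\rho^\ep>0$ (with $c$ depending on $T_0\|\pa_x u^n\|_{L^\infty}$) and $H^3$-in-space regularity propagated from $\rho_0^\ep$, provided $T_0$ is small. Since $\mu_\ep(\rho^{n+1})=\mu(\rho^{n+1})+\ep^\beta\alpha(\rho^{n+1})^\alpha$ is bounded below by $\ep^\beta\alpha\,(c\underline\rho^\ep)^\alpha>0$, the equation for $u^{n+1}$ is uniformly parabolic on $[0,T_0]\times\overline\Omega_\ep$, so linear parabolic theory gives a unique solution in $\mathcal Y_2(T_0)$.

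Next I would derive uniform-in-$n$ energy estimates on a time interval $[0,T_0]$ with $T_0=T_0(\ep,A_\ep,\underline\rho^\ep)$ independent of $n$: testing the $\rho^{n+1}$-equation against powers of $\rho^{n+1}$ and differentiating in $x$, and testing the $u^{n+1}$-equation against $u^{n+1}$, $\pa_{xx}u^{n+1}$, $\pa_t u^{n+1}$, one bounds $\|\rho^{n+1}\|_{\mathcal X_3}+\|u^{n+1}\|_{\mathcal Y_3}$ by a constant depending only on $A_\ep$, $\underline\rho^\ep$, $\ep$ — the key point being that all the nonlocal terms are harmless because $W$ and $\phi_\ep$ are fixed (for each $\ep$) and Young's convolution inequality on the bounded domain $\Omega_\ep$ gives $\|W*^\ep\rho\|_{H^3(\Omega_\ep)}\lesssim\|W\|_{\ldots}\|\rho\|_{L^1(\Omega_\ep)}$ etc., so they contribute only lower-order, easily-absorbed terms. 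Here the smallness of $T_0$ is used to close the a priori bound (standard continuation/barrier argument in $n$). Then I would show the scheme is contractive: subtracting consecutive iterates and estimating $\|\rho^{n+1}-\rho^n\|_{L^2}+\|u^{n+1}-u^n\|_{H^1}$ in terms of the previous differences, one gets a factor $C T_0$ (or $CT_0^{1/2}$), so after possibly shrinking $T_0$ the map is a contraction and $(\rho^n,u^n)$ converges in the lower-norm space; by interpolation with the uniform higher-order bounds the limit $(\rho^\ep,u^\ep)$ lies in $\mathcal X_2(T_0)\times\mathcal Y_2(T_0)$, solves \eqref{app_eq}, retains the positive lower bound, and uniqueness follows from the same contraction estimate applied to two solutions.

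The main obstacle I anticipate is handling the degeneracy/coupling at the level of the \emph{highest} derivatives in the $u$-equation together with the fact that the coefficient $\mu_\ep(\rho^{n+1})$ depends on the transported density, whose $H^3$-norm is only controlled through $\|\pa_x u^n\|_{L^\infty}$: closing the loop requires carefully ordering the estimates (first transport, then parabolic) and choosing $T_0$ so that the exponential growth $\exp(CT_0\|\pa_x u^n\|_{L^\infty})$ in the density bounds stays below a fixed multiple of the initial data — this is precisely where the construction of the strictly positive approximate initial density in \eqref{id-d-1} and the finiteness of $A_\ep$ enter. A secondary technical point is the compatibility of the nonlocal force term with the Dirichlet boundary condition when testing against $\pa_t u^{n+1}$; since $\phi_\ep*^\ep$ and $W*^\ep$ produce smooth functions that need not vanish on $\pa\Omega_\ep$, one keeps them on the right-hand side as source terms and never integrates them by parts against boundary-sensitive test functions, which avoids any boundary obstruction. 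The remaining estimates are routine energy-method computations, so I would relegate them to Appendix~\ref{AppendixA} as the paper indicates.
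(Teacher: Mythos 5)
Your plan is correct and follows essentially the same route as the paper's Appendix~\ref{AppendixA}: a Picard-type iteration solving a linearized transport equation and a linear parabolic equation at each step (with the nonlocal terms frozen at the previous iterate), a density lower bound via characteristics making the viscosity uniformly elliptic, uniform-in-$n$ bounds on a short time interval $T_0(\ep,A_\ep,\underline{\rho}^\ep)$, and convergence plus uniqueness via a contraction estimate in low norms. The only cosmetic differences are the precise form of the linearization (the paper freezes both $\rho_{n-1}$ and $u_{n-1}$ in the right-hand sides and writes the transport step non-conservatively) and that the paper closes the uniform estimates in $\mathcal{X}_2\times\mathcal{Y}_2$ rather than at the $H^3$ level, neither of which changes the argument.
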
 

Let us now denote the maximal time of existence of the regular solution by $T_{\max}$. Our next step is to derive the estimates that are independent of $T_{\max}$ and in consequence allow us to extend the solution to arbitrary large time interval $[0,T)$. The crucial estimate here is the positive bound from below for the density, which is a consequence of the $\ep$-dependent approximation.

\subsection{Total energy and modulated energy esimates}\label{sec:energy-approx}

We begin by estimating the total energy $\calE_\ep$ along solutions of the approximate system. Due to the regularity of $(\rho^\ep,u^\ep)$, we can rigorously compute the temporal derivative of the total energy to obtain 
\begin{align*}
    \Dt \calE_\ep(\rho^\ep,u^\ep) = -\intM{\mu_\ep(\rho)|\pa_x u|^2}-\tau\intM{\rho u^2} - \frac12 \calK_\phie(\rho,u) \le 0.
\end{align*}
In particular, the energy is nonincreasing with
\equ{\label{energy}
    \calE_\ep(\rho^\ep(t),u^\ep(t)) \le \calE_\ep(\rho_0^\ep,u_0^\ep),
}
which provides uniform estimates for the kinetic energy and internal energy since the interaction potential $W$ is assumed to be bounded from below. Hence,
\[
    \calF(\rho^\ep(t)) = \calF_\ep(\rho^\ep(t)) + \frac{\ep^\beta}{1-\alpha}\intM{(\rho^\ep)^\alpha(t)} \le \sup_{\ep>0}\calE_\ep(\rho_0^\ep,u_0^\ep) + \frac{\ep^\beta}{1-\alpha}(2R_\ep)^{1-\alpha}.
\]
Choosing $\beta$ and $R_\ep$ appropriately, one obtains $\ep^\beta R_\ep^{1-\alpha}\to 0$, which consequently implies
\[
   c_\rho\coloneq \sup_{\ep>0}\sup_{t\in[0,T]} \|\rho^\ep(t)\|_{L^\gamma} <+\infty.
\]

To obtain the functional inequality \eqref{est:BD_app}, we perform similar computations as in the proof of Lemma~\ref{lem:bd-estimate} to arrive at the differential inequality
\begin{align*}
    \Dt\calJ_{\tau,\lambda,\ep} &\le -\calD_{\tau,\lambda,\ep} + \calR_\ep,
\end{align*}
with the error term $\calR_\ep\ge 0$, that can be estimated from above by
\[
    \calR_\ep \le 3c_\rho\Vert \phie-\phi \Vert_{L^{\gamma^\prime}} \left(  \intM{ \vr^\ep(t) \vert u^\ep(t) \vert^2 } + \intM{\rho^\ep(t) |\pa_x \delta \calF_\e(\rho^\ep(t))|^2}\right).
\]
Since $\phie$ converges to $\phi$ in $L^\gamma$, \eqref{est:BD_app} holds with $a_\ep\coloneq 3c_\rho\Vert \phie-\phi \Vert_{L^{\gamma^\prime}}$.

\medskip

Summarizing the above estimates, we obtain the following a-priori estimates 
\equ{\label{est_mod_app}
	&\sup_{t\in[0,T_{\max})}\lr{\intM{\vr^\ep(t)|u^\ep(t)|^2}+\calF(\rho^\ep(t))+ \intM{\vr^\ep(t)|\px\delta\calF_\ep(\rho^\ep(t))|^2}}\\
	&\qquad+\intTM{\mu_\ep(\vr^\ep)|\px u^\ep|^2} +\tau \intTM{\rho^\ep|u^\ep|^2}+\int_0^T \calK_\phi(\rho^\ep,u^\ep)\,\dt \le C,
}
for some $\ep$-independent constant $C>0$.

\subsection{Uniform estimates for the density}
To deduce a-priori estimates for the density from the lemmas above, we need to analyze the structure of the term $\delta\calF_\ep$. To do so, we first estimate the total mass and the higher-order moments of the density.

\begin{lemma}
	For regular solutions $(\rho,u)$ on time interval $[0,T_{\rm max})$, and for $\kappa>0$, we have
	\begin{subequations}\label{moments}
		\begin{gather}
    \intM{\vr(t,x)}=1\qquad\text{for all $t\in[0,T_{\max})$},\label{moments_0}\\
   \sup_{t\in[0,T_{\rm max})}\intM{|x|^2\vr(t,x)} \leq C(T_{\rm max},data),\label{moments_2}\\
   \Dt\intM{|x|^{\kappa+2}\vr(t,x)}\leq C_1(\kappa) \intO{|x|^{\kappa+2}\vr(t,x)}+C_2(\kappa)\intO{\vr(t,x)|u(t,x)|^{2+\kappa}}\label{moments_2k}.
		\end{gather}
	\end{subequations}

\end{lemma}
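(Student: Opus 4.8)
The three claims in \eqref{moments} are established by testing the continuity equation \eqref{app_eq1} against suitable weight functions and controlling the resulting terms using the a-priori bounds \eqref{est_mod_app} together with the assumptions \eqref{ini_app_app} on the initial data. The conservation of mass \eqref{moments_0} is immediate: integrating \eqref{app_eq1} over $\Omega_\ep$ and using the boundary condition $u=0$ on $\partial\Omega_\ep$ (so the flux term vanishes) gives $\tfrac{\rm d}{{\rm d}t}\intM{\rho}=0$, and the value $1$ follows from the normalization of $\rho_0^\ep$ built into \eqref{id-d-1}.

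\textbf{The second moment.} To prove \eqref{moments_2}, I would test \eqref{app_eq1} with $|x|^2$. Since $|x|^2$ and its derivatives are bounded on the bounded domain $\Omega_\ep$, all integrations by parts are justified and no boundary contributions survive (because $u$ vanishes on $\partial\Omega_\ep$ and $\rho$ is smooth). This yields
\equ{\nonumber
  \Dt\intM{|x|^2\rho} = 2\intM{x\,\rho u} \le \intM{|x|^2\rho} + \intM{\rho u^2},
}
where I used Young's inequality and $\intM{\rho}=1$ is not even needed here. The term $\intM{\rho u^2}$ is bounded uniformly in $t$ and $\ep$ by the kinetic-energy bound in \eqref{est_mod_app}. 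Gr\"onwall's lemma then gives
\equ{\nonumber
  \intM{|x|^2\rho(t)} \le e^{t}\left(\intM{|x|^2\rho_0^\ep} + \int_0^t e^{-s}\intM{\rho u^2}\,{\rm d}s\right) \le C(T_{\max},\text{data}),
}
using that $\intM{|x|^2\rho_0^\ep}$ is bounded uniformly in $\ep$ (which should follow from the construction \eqref{id-d-1} and the hypothesis $|x|^{\kappa+2}\rho_0\in L^1(\R)$, hence in particular $|x|^2\rho_0\in L^1$ after noting the mollification and the Gaussian tail $\ep e^{-x^2}$ contribute boundedly; this uniform-in-$\ep$ bound is presumably part of Proposition~\ref{prop_convJ}'s supporting estimates).

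\textbf{The higher-order moment.} For \eqref{moments_2k}, I would test \eqref{app_eq1} with $|x|^{\kappa+2}$. The derivative $\partial_x |x|^{\kappa+2} = (\kappa+2)|x|^\kappa \sgn x$ is bounded on $\Omega_\ep$, so again the computation is rigorous, and
\equ{\nonumber
  \Dt\intM{|x|^{\kappa+2}\rho} = (\kappa+2)\intM{|x|^{\kappa}\,(\sgn x)\,\rho u} \le (\kappa+2)\intM{|x|^{\kappa+1}\rho|u|}.
}
Now I split the integrand using Young's inequality with exponents $\tfrac{\kappa+2}{\kappa+1}$ and $\kappa+2$ applied to the factors $|x|^{\kappa+1}\rho^{\frac{\kappa+1}{\kappa+2}}$ and $\rho^{\frac{1}{\kappa+2}}|u|$: this produces a term proportional to $|x|^{\kappa+2}\rho$ and a term proportional to $\rho|u|^{\kappa+2}=\rho|u|^{2+\kappa}$, yielding exactly the claimed inequality with explicit constants $C_1(\kappa),C_2(\kappa)$ depending only on $\kappa$. (One could alternatively bound $|x|^{\kappa+1}|u|\le \tfrac{1}{2+\kappa}|x|^{\kappa+2}+\tfrac{1+\kappa}{2+\kappa}|x|^{(\kappa+1)\frac{2+\kappa}{1+\kappa}}|u|^{\cdots}$; the precise split is routine.) I would remark that unlike \eqref{moments_2}, this inequality is left in differential form because controlling $\int_0^T\intM{\rho|u|^{2+\kappa}}\,{\rm d}t$ — needed to close the Gr\"onwall argument for the $(\kappa+2)$-moment — is precisely the content of the Mellet--Vasseur type estimate derived later in Section~\ref{sec:compactness}, and is therefore deferred.

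\textbf{Main obstacle.} The only genuinely nontrivial point is the uniform-in-$\ep$ bound on $\intM{|x|^2\rho_0^\ep}$ (and, for the later closure, $\intM{|x|^{\kappa+2}\rho_0^\ep}$), which requires understanding how the mollification with $\eta_{\ep^{\gmhalf}}$ and the additive Gaussian $\ep e^{-x^2}$ in \eqref{id-d-1}, together with the rescaling choice \eqref{choiceR_ep} of $R_\ep$, interact with the weighted $L^1$ bounds on $\rho_0$; this is exactly the type of estimate handled in the proof of Proposition~\ref{prop_convJ} in Appendix~\ref{app:initial}, and I would simply cite it. Everything else is elementary testing of the transport equation against polynomial weights on a bounded domain.
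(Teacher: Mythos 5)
Your proof is correct and follows essentially the same route as the paper's: integrate the continuity equation against $1$, $|x|^2$, and $|x|^{\kappa+2}$ respectively, use the Dirichlet boundary condition to kill boundary terms, and apply Young's inequality; \eqref{moments_2} is closed by Gr\"onwall together with the kinetic-energy bound from \eqref{est_mod_app}, while \eqref{moments_2k} is deliberately left in differential form. The only point worth adding is that you correctly flag the uniform-in-$\ep$ bound on $\intM{|x|^2\rho_0^\ep}$ and $\intM{|x|^{\kappa+2}\rho_0^\ep}$ as the nontrivial input; the paper indeed delegates this to the initial-data approximation (Lemma~\ref{id-l1} / Proposition~\ref{prop_convJ}) and does not re-derive it in the lemma, so your citation there is exactly what is needed.
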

\begin{proof}
	Integrating the continuity equation \eqref{app_eq1} w.r.t.\ the spatial variable, and using the zero Dirichlet boundary condition, we obtain
	\[
		\frac{ \rm d}{\dt}\intM{ \vr(t,x)}=0,
    \]
	and so, integrating w.r.t.\ time, we see that  \eqref{moments_0} holds.
 
 Next, multiplying continuity equation \eqref{app_eq1} by $x^2$ and integrating by parts, we obtain
	\equ{\label{x2rho}
		\Dt\intM{x^2\vr}=2\intM{x \vr u}\leq \intM{x^2\vr}+\intO{\vr u^2},
	}
	and we conclude the proof of \eqref{moments_2} by the Gronwall argument using estimates from \eqref{est_mod_app} .
	
	At last, we multiply the continuity equation by $|x|^{2+\kappa}$ to check that 
	\[
		\Dt\intM{x^2|x|^\kappa\vr}=(2+\kappa)\intM{x|x|^\kappa \vr u}.
    \]
	Applying the Young inequality to the r.h.s., we directly obtain
	\equ{\label{xkrho2}
		\Dt\intM{|x|^{\kappa+2}\vr}
		&\leq C_1(\kappa) \intM{|x|^{\kappa+2}\vr}+C_2(\kappa)\intM{\vr|u|^{2+\kappa}}.\qedhere
	}
\end{proof}
\begin{corollary}\label{cor_bddr}
	For regular solutions, we have that
	\equ{\label{rho_updown}
		\sup_{t\in[0,T_{\max})}\Biggl\{\frac{\gamma^2}{(\gamma-\sfrac{1}{2})^2}\intM{\left|\px\rho^{\gamma-\sfrac{1}{2}}(t,x)\right|^2} + \frac{\ep^{2\beta}\alpha^2}{(\alpha-\sfrac{1}{2})^2}\intM{\left|\px \rho^{\alpha-\sfrac12}(t,x)\right|^2} \Biggr\}\leq C, 
	}
	where the constant $C>0$ is independent of $\ep>0$.
	
	In particular, there exists a constant $c_\infty>0$, independent of $\ep>0$ such that
	\equ{\label{norm_sup}
		\|\vr\|_{C([0,T_{\max}]\times \overline\Omega_\ep)}\leq c_\infty.
	}
	Moreover, for some $c_\ep>0$ depending on $\ep>0$, we have
	\equ{\label{norm_inf}
		\|\vr^{-1}\|_{C([0,T_{\max}]\times\overline\Omega_\ep)}\leq c_\ep,\qquad\text{with\; $c_\ep\to+\infty$ as $\ep\to 0$.}
	}
\end{corollary}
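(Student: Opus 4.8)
The plan is to read off all three assertions from the $\ep$-uniform bound $\sup_{t}\intM{\rho\,|\px\delta\calF_\ep(\rho)|^2}\le C$ contained in \eqref{est_mod_app}, combined with the mass identity \eqref{moments_0} and the second-moment bound \eqref{moments_2}. The key step (Step~1) controls the two gradient norms in \eqref{rho_updown}. Since $\varphi_\ep'(\rho)=\gamma\rho^{\gamma-2}+\ep^\beta\alpha\rho^{\alpha-2}$, one has the pointwise identity
\[
\sqrt{\rho}\,\px\varphi_\ep(\rho)=\gamma\,\rho^{\gamma-\frac32}\px\rho+\ep^\beta\alpha\,\rho^{\alpha-\frac32}\px\rho=\frac{\gamma}{\gamma-\frac12}\,\px\bigl(\rho^{\gamma-\frac12}\bigr)+\frac{\ep^\beta\alpha}{\alpha-\frac12}\,\px\bigl(\rho^{\alpha-\frac12}\bigr).
\]
The essential observation is that the two summands $\gamma\rho^{\gamma-3/2}\px\rho$ and $\ep^\beta\alpha\rho^{\alpha-3/2}\px\rho$ have the same sign (that of $\px\rho$), because $\gamma,\ep^\beta\alpha>0$ and $\rho>0$; there is therefore no cancellation, and pointwise
\[
\bigl|\sqrt{\rho}\,\px\varphi_\ep(\rho)\bigr|^2\ \ge\ \frac{\gamma^2}{(\gamma-\frac12)^2}\,\bigl|\px\rho^{\gamma-\frac12}\bigr|^2+\frac{\ep^{2\beta}\alpha^2}{(\alpha-\frac12)^2}\,\bigl|\px\rho^{\alpha-\frac12}\bigr|^2.
\]
Since $\px\delta\calF_\ep(\rho)=\px\varphi_\ep(\rho)+(\px W)*^\ep\rho$, the triangle inequality reduces the matter to bounding $\intM{\rho\,|(\px W)*^\ep\rho|^2}$. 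Writing $W=-|x|+\tw$ with $|\px\tw(x)|\le l_W(1+|x|)$ and using $\intM{\rho}=1$, one gets $|(\px W)*^\ep\rho(x)|\le a+b\,|x|$ with $a,b$ depending only on $l_W$ and $\sup_t\intM{|x|^2\rho}$, the latter being finite by \eqref{moments_2}; hence $\intM{\rho\,(a+b|x|)^2}\le 2a^2+2b^2\sup_t\intM{|x|^2\rho}$. Adding this to the bound from \eqref{est_mod_app} and taking the supremum in $t$ yields \eqref{rho_updown} with a constant independent of $\ep$ (and independent of $T$ as well, whenever $\calD_{\tau,\lambda,\ep}\ge0$).

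For the $L^\infty$ bound \eqref{norm_sup} I would set $h\coloneq\rho^{\gamma-\frac12}$ (so $\gamma-\frac12>0$): Step~1 gives $\|\px h(t)\|_{L^2(\Omega_\ep)}\le C^{1/2}$ uniformly in $t$ and $\ep$, while $\intM{h^{1/(\gamma-1/2)}}=\intM{\rho}=1$. Given $x\in\overline\Omega_\ep$, choose a unit interval $I\subset\Omega_\ep$ with $x\in\overline I$ (possible once $2R_\ep>1$); since $\int_I h^{1/(\gamma-1/2)}\le1$ there is $x_0\in I$ with $h(x_0)\le1$, whence $h(x)\le h(x_0)+|x-x_0|^{1/2}\|\px h(t)\|_{L^2}\le1+C^{1/2}$, which is \eqref{norm_sup} with $c_\infty=(1+C^{1/2})^{1/(\gamma-1/2)}$. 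For the lower bound \eqref{norm_inf} I would set $k\coloneq\rho^{\alpha-\frac12}$ (so $\alpha-\frac12<0$, and an upper bound on $k$ is a lower bound on $\rho$): Step~1 gives $\|\px k(t)\|_{L^2(\Omega_\ep)}^2\le C_\ep\coloneq\tfrac{(\alpha-1/2)^2}{\alpha^2}\ep^{-2\beta}C$. Because $\|\rho\|_{L^\infty}\le c_\infty$, $\intM{\rho}=1$ and $|\Omega_\ep|=2R_\ep$, the set $\{\rho\ge(4R_\ep)^{-1}\}$ has measure at least $(2c_\infty)^{-1}$; picking $x_0$ there gives $k(x_0)\le(4R_\ep)^{\frac12-\alpha}$, hence $k(x)\le(4R_\ep)^{\frac12-\alpha}+(2R_\ep)^{1/2}C_\ep^{1/2}=:c_\ep'$ for every $x\in\overline\Omega_\ep$, and thus $\rho(x)\ge(c_\ep')^{-1/(1/2-\alpha)}=:c_\ep^{-1}>0$; since $\ep^{-2\beta}\to\infty$ and $R_\ep\to\infty$ as $\ep\to0$, $c_\ep\to\infty$.

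The main obstacle is not conceptual but bookkeeping: once \eqref{est_mod_app}, \eqref{moments_0} and \eqref{moments_2} are in hand, one must (i) carry out the one-dimensional Sobolev step on the growing domain $\Omega_\ep$ by the slicing argument above rather than quoting an embedding constant, so that all constants stay $\ep$-independent, and (ii) in the lower bound, track the exact powers of $\ep$ and $R_\ep$ entering $c_\ep$ so as to confirm the blow-up as $\ep\to0$. The one genuinely structural point is the sign alignment in Step~1: it is precisely because the regularisation term $\ep^\beta\alpha\rho^{\alpha-1}$ is added to $\varphi$ with the same positive structure (with $\mu_\ep=\rho^2\varphi_\ep'$) that the cross term is nonnegative, which is what makes \eqref{rho_updown} decouple into its two advertised pieces.
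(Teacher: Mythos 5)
Your proposal is correct and follows essentially the same route as the paper: the non-negativity of the cross term $\ep^\beta\gamma\alpha\,\rho^{\gamma+\alpha-3}|\px\rho|^2$ to split \eqref{rho_updown} into its two pieces, the bound $\intM{\rho\,|\px W*^\ep\rho|^2}\le C$ via the linear growth of $\px W$ and the second moment \eqref{moments_2}, subtraction from \eqref{est_mod_app}, and then a one-dimensional fundamental-theorem-of-calculus plus Cauchy--Schwarz argument anchored at a base point supplied by the unit mass. The only (immaterial) differences are your choice of base points and exponents: you use a unit interval where $\rho\le 1$ and the function $\rho^{\gamma-\sfrac12}$, whereas the paper uses the mean-value point with $\rho=\tfrac{1}{2R_\ep}$ and works with $\rho^\gamma$ for the upper bound.
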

\begin{proof}
	From Lemma~\ref{moments} and the fact that  
 \[
 |\pa_x W * \rho(x)| \leq C\lt( 1 + |x| + \int_\R |x|\,\rho(x)\,\dx\rt) \leq C(1+|x|),
 \]
 we conclude from \eqref{moments_2} that
	\equ{\label{est_potw}
		\intM{\rho(x)|\px W*\rho(x)|^2}\leq C\lr{1 +\intM{|x|^2\vr}}\leq C.
	}
	Therefore, we may now deduce from \eqref{est_mod_app} that
	\equ{\nonumber
		\sup_{t\in[0,T_{\max})}\intM{\rho|\px\varphi_\ep(\rho)|^2}=
		\sup_{t\in[0,T_{\max})}\intM{\biggl|\frac{\gamma}{\gamma-\sfrac12}\px\rho^{\gamma-\frac{1}{2}}+\frac{\ep^\beta\alpha}{\alpha-\sfrac12}\px \rho^{\alpha-\frac12}\biggr|^2}\leq C.
	}
	Note that since 
	\[
	\frac{\gamma}{\gamma-\sfrac12}\frac{\ep^\beta\alpha}{\alpha-\sfrac12}\px\rho^{\gamma-\sfrac{1}{2}}\px \rho^{\alpha-\sfrac12} = \ep^\beta\gamma\alpha \rho^{\gamma+\alpha-3} |\partial_x\rho|^2 \ge 0,
	\]
	this estimate directly implies \eqref{rho_updown}.
	
	Using this estimate, we first see how the density is bounded from below. From \eqref{moments_0}, we have
	\[
	\intM{\rho(t,x)} = 1\quad\text{for all $t\in[0,T_{\max})$}.
	\]
	Since $\rho$ is a continuous function, there exist some $x_\ep(t)\in \Omega_\ep$ for each $t\in[0,T_{\max})$ such that
	\equh{\rho(t,x_\ep(t)) = \frac{1}{2R_\ep}.}
	On the other hand, for every $(t,x)\in[0,T_{\max})\times \Omega_\ep$, we can estimate
	\begin{align*}
	\big( \rho(t,x) \big)^{\alpha-1/2} - \big( \rho(t,\bar x_\ep(t)) \big)^{\alpha-1/2}
	&= \int_{x_\ep(t)}^x  \partial_y \rho^{\alpha-1/2}(t,y)\,\dy \\
 &\le \sqrt{|x-\bar x_\ep(t)|}\|\partial_x \rho^{\alpha-\sfrac{1}{2}}(t,\cdot) \|_{L^2(\Omega_\ep)},
	\end{align*}
	which then implies
	\begin{align*}
		\big( \rho(t,x) \big)^{\alpha-1/2}
		& \leq \left(\frac{1}{2R_\ep}\right)^{\alpha-1/2} + \sqrt{2R_\ep}\, \|\partial_x \rho^{\alpha-\sfrac{1}{2}}(t,\cdot) \|_{L^2(\Omega_\ep)}.
	\end{align*}
 Finally, owing to the second part of \eqref{rho_updown}, we obtain
	\[
	\rho^{-1}(t,x) \leq c_\ep  \qquad\text{for all $(t,x)\in[0,T_{\max})\times\overline\Omega_\ep$},
	\]
 for some constant $c_\ep$ with $c_\ep\to+\infty$ as $\ep\to 0$, thus proving \eqref{norm_inf}. 
	
Proving the estimate \eqref{norm_sup} is very similar, and uses the first part of the estimate \eqref{rho_updown}. Indeed, as before, we have that
	\equ{\nonumber
		\rho^\gamma(t,x) &= \rho^\gamma(t,x_\ep(t)) +\int_{x_\ep(t)}^x\pa_y \rho^{\gamma}(t,y)\, \dy \\
		&= \rho^\gamma(t,x_\ep(t)) + \frac{\gamma}{\gamma-\sfrac12}\int_{x_\ep(t)}^x\sqrt{\rho(t,y)}\,\pa_y \rho^{\gamma-\sfrac12}(t,y) \d y \\
		&\le \left(\frac{1}{2R_\ep}\right)^\gamma + \frac{\gamma}{\gamma-\sfrac12}\|\partial_x\rho^{\gamma-\sfrac{1}{2}}(t,\cdot)\|_{L^2(\Omega_\ep)}.
	}
	The last term is again controlled thanks to \eqref{rho_updown}, and so the proof of \eqref{norm_sup} is complete. Observe that the previous estimate is uniform w.r.t.\ $\ep>0$.
\end{proof}

\subsection{Conclusion of the proof of Theorem~\ref{Th:3}}
With these estimates at hand, we may extend our local-in-time solution to a global in-time regular solution, and the proof for the first part of Theorem \ref{Th:3} is complete, and the estimates above are valid for any $T>0$ instead of $T_{\rm max}$.

    As for the second part when either $\tau>0$ or $\phi$ is uniformly bounded from below, then $\lambda>0$ can be chosen, independently of $\ep>0$, such that $\ell_{\tau,\lambda}>0$. We then choose $\ep_0\ll 1$ small such that
    \[
        a_\ep < \min\left\{\frac{1}{2}, \ell_{\tau,\lambda}\right\}\qquad\text{for every $\ep\in(0,\ep_0)$},
    \]
    which can be done since $a_\ep\to 0$ as $\ep\to 0$. Since the term on the right-hand side of \eqref{est:BD_app} is contained in $\calD_{\tau,\lambda,\ep}$, we can then absorb them to obtain \eqref{est:BD_app_uniform}, therewith concluding the proof.

\section{Compactness results for approximate solutions}\label{sec:compactness}

The estimates obtained in Section~\ref{Sec:existence} (cf.\ \eqref{est_mod_app}) are insufficient to pass to the limit in the convective term in \eqref{app_eq}, which, at this point, is only known to be bounded in $L^1$ due to kinetic energy estimate \eqref{energy}. This section aims at improving the a-priori information and to deduce the convergence of the approximating sequence.
\subsection{Derivation of the Mellet-Vasseur inequality}
We start with the better velocity estimate in the spirit of the Mellet-Vasseur estimate \cite{MV07}, but on a  bounded domain $\Omega_\varepsilon=(-R_\varepsilon,R_\varepsilon)$. Note that due to zero Dirichlet boundary condition for $u^\ep$, this is indeed possible, and we have the following.
\begin{lemma}\label{lem:MV}
For any $\kappa\in (0,2\alpha]$, let  
\[
\calE_{\kappa}(\rho,u) \coloneq \frac{1}{2+\kappa}\intM{\rho \vert u \vert^{2+\kappa}}.
\]
Then, for regular approximate solutions, we have 
\equ{\nonumber%\label{MV_est}
	& \sup_{t\in[0,T]}\calE_{\kappa}(t) + (1+\kappa) \intTM{\mu_\ep(\rho) |u|^\kappa \vert \pa_x u \vert^2} + \tau\intM{\rho |u|^{k+2}}
	\leq c(\vr_0,u_0,T) .
}
\end{lemma}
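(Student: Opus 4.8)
The plan is to multiply the momentum equation by $(2+\kappa)u|u|^\kappa$ and integrate over $\Omega_\ep$, using the regularity of the approximate solution $(\rho^\ep,u^\ep)$ and the homogeneous Dirichlet boundary condition $u=0$ on $\partial\Omega_\ep$ so that all integrations by parts produce no boundary terms. The time-derivative and convective terms combine, after using the continuity equation \eqref{app_eq1}, into $\Dt\calE_\kappa(\rho,u)$, since $\pa_t(\rho u)\cdot u|u|^\kappa + \pa_x(\rho u^2)\cdot u|u|^\kappa$ integrates to $\frac{1}{2+\kappa}\Dt\intM{\rho|u|^{2+\kappa}}$ by the usual renormalization argument. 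The damping term $-\tau\rho u$ contributes $-\tau\intM{\rho|u|^{2+\kappa}}$, which is exactly the good nonnegative term on the left-hand side we want to keep. So the identity to establish reads, schematically,
\[
\Dt\calE_\kappa(\rho,u) + \tau\intM{\rho|u|^{2+\kappa}} = \underbrace{(2+\kappa)\intM{\pa_x(\mu_\ep(\rho)\pa_x u)\,u|u|^\kappa}}_{\text{viscous}} + \underbrace{(2+\kappa)\intM{(-\rho\pa_x\delta\calF_\ep(\rho))\,u|u|^\kappa}}_{\text{pressure + interaction}} + \underbrace{(2+\kappa)\intM{\rho[\phi_\ep*^\ep,u]\rho\;u|u|^\kappa}}_{\text{alignment}}.
\]

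The viscous term is the key one: integrating by parts,
\[
(2+\kappa)\intM{\pa_x(\mu_\ep(\rho)\pa_x u)\,u|u|^\kappa} = -(2+\kappa)(1+\kappa)\intM{\mu_\ep(\rho)|u|^\kappa|\pa_x u|^2},
\]
which, brought to the left, yields the dissipation term $(1+\kappa)\intTM{\mu_\ep(\rho)|u|^\kappa|\pa_x u|^2}$ claimed in the statement (up to the harmless factor $2+\kappa$ that can be absorbed into the constant or kept — I will keep the clean form). For the pressure/internal-energy part, I would split $\delta\calF_\ep(\rho) = \varphi_\ep(\rho) + W*^\ep\rho$. The $\varphi_\ep$ contribution is $-(2+\kappa)\intM{\rho\pa_x\varphi_\ep(\rho)\,u|u|^\kappa}$; writing $\rho\pa_x\varphi_\ep(\rho) = \pa_x P_\ep(\rho)$ with $P_\ep' = \rho\varphi_\ep'$ and integrating by parts moves a derivative onto $u|u|^\kappa$, producing $(2+\kappa)(1+\kappa)\intM{P_\ep(\rho)|u|^\kappa\pa_x u}$, which is then controlled by Young's inequality: a small constant times $\mu_\ep(\rho)|u|^\kappa|\pa_x u|^2$ (absorbed into the dissipation, using $P_\ep(\rho)^2/\mu_\ep(\rho)\lesssim \rho^\gamma + \ep^\beta\rho^\alpha$ together with the uniform $L^\gamma$ bound from Corollary~\ref{cor_bddr} and $\|\rho\|_\infty\le c_\infty$), plus a term $\lesssim\intM{\rho^{\gamma}|u|^\kappa}$ or similar, which by Hölder and interpolation is bounded by $\calE_\kappa$ plus lower-order quantities controlled by \eqref{est_mod_app}. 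The interaction part $-(2+\kappa)\intM{\rho\,\pa_x W*^\ep\rho\;u|u|^\kappa}$ is easier: $|\pa_x W*^\ep\rho|\le C(1+|x|)$ as in \eqref{est_potw}, and $\rho|u||u|^\kappa = (\sqrt{\rho}|u|^{1+\kappa/2})(\sqrt{\rho}|u|^{\kappa/2})$ so Young's inequality gives a bound by $\delta\calE_\kappa$ plus $C\intM{(1+|x|^2)\rho|u|^\kappa}$; the latter is handled by splitting $|u|^\kappa\le 1 + |u|^{2+\kappa}$ (since $\kappa\le 2+\kappa$) — wait, more carefully, by Hölder with exponents $\frac{2+\kappa}{\kappa}$ and $\frac{2+\kappa}{2}$ one bounds $\intM{(1+|x|^2)\rho|u|^\kappa}$ by $\big(\intM{\rho|u|^{2+\kappa}}\big)^{\kappa/(2+\kappa)}\big(\intM{(1+|x|^2)^{(2+\kappa)/2}\rho}\big)^{2/(2+\kappa)}$; the second factor is the moment bound \eqref{moments_2} together with \eqref{moments_2k} (this is where $\kappa\le\min\{2\gamma-1,2/\gamma\}$ and the moment estimates enter), and the first factor is $\calE_\kappa^{\kappa/(2+\kappa)}$, absorbed by Young.

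For the alignment term, I would symmetrize: $(2+\kappa)\iintM{\phi_\ep(x-y)(u(y)-u(x))\rho(x)\rho(y)\,u(x)|u(x)|^\kappa}$ becomes, after relabeling and averaging, $-\tfrac{2+\kappa}{2}\iintM{\phi_\ep(x-y)(u(x)-u(y))(u(x)|u(x)|^\kappa - u(y)|u(y)|^\kappa)\rho(x)\rho(y)}$, and since $s\mapsto s|s|^\kappa$ is nondecreasing the integrand is pointwise nonnegative, so this term has a favorable sign and can simply be dropped (alternatively bounded crudely using $\|\phi_\ep*\rho\|_\infty$ and the kinetic-type bounds — either works). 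Finally I collect: after absorbing all small terms into $(1+\kappa)\intM{\mu_\ep(\rho)|u|^\kappa|\pa_x u|^2}$ and into a small multiple of $\calE_\kappa$, I am left with a differential inequality $\Dt\calE_\kappa + \tau\intM{\rho|u|^{2+\kappa}} + (1+\kappa)\intM{\mu_\ep(\rho)|u|^\kappa|\pa_x u|^2} \le C + C'\calE_\kappa + (\text{integrable-in-}t\ \text{data terms})$, and Grönwall's lemma on $[0,T]$ together with the bound on $\calE_\kappa(0)$ from $\limsup_{\ep\to0}\intM{\rho_0^\ep|u_0^\ep|^{2+\kappa}}\le\intO{\rho_0|u_0|^{2+\kappa}}<\infty$ (Proposition~\ref{prop_convJ}) closes the estimate with a constant $c(\rho_0,u_0,T)$ independent of $\ep$. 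The main obstacle is the pressure term: ensuring that the constant coming from $P_\ep(\rho)^2/\mu_\ep(\rho)$ and from the interpolation of $\intM{\rho^\gamma|u|^\kappa}$ is genuinely $\ep$-independent, which is precisely what the restriction $\kappa\le 2\alpha$ (so that $|u|^\kappa$ pairs correctly with $\mu_\ep(\rho)\ge\ep^\beta\alpha\rho^\alpha$ and with the approximate internal energy) and the uniform bounds of Corollary~\ref{cor_bddr} are designed to guarantee.
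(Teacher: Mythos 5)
Your proposal is correct and follows essentially the same route as the paper: multiply the momentum equation by $u|u|^\kappa$ (you use the normalization $(2+\kappa)u|u|^\kappa$, which only shifts constants), combine with the renormalized continuity equation to produce $\Dt\calE_\kappa$, integrate by parts in the viscous term, absorb the pressure contribution by Young after using $\mu_\ep=\rho^2\varphi_\ep'$, kill the alignment term by symmetrization and monotonicity of $s\mapsto s|s|^\kappa$, control the interaction term via $|\px W\ast^\ep\rho|\lesssim 1+|x|$ together with the moment estimates \eqref{moments_2}--\eqref{moments_2k}, and close with Gr\"onwall and the initial-data bound from Proposition~\ref{prop_convJ}. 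The only (inconsequential) deviation is in the bookkeeping of the H\"older/Young splits for the $\rho^\gamma|u|^\kappa$ and $(1+|x|)\rho|u|^{\kappa+1}$ terms, where the paper makes a single Young application with exponents $\kappa+2$ and $(\kappa+2)/(\kappa+1)$ that directly isolates $\rho u^2$, the moment $(1+|x|)^{2+\kappa}\rho$, and an $L^1\cap L^\infty$ power of $\rho$ (this is exactly where $\kappa\le 2\alpha$ enters to keep the exponent nonnegative), while you split in two stages; both are valid.
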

\begin{proof}
Multiplying the momentum equation \eqref{app_eq2} by $u |u|^\kappa$, multiplying the continuity equation \eqref{app_eq1} by $|u|^{2+\kappa}/(2+\kappa)$, and adding up the obtained expressions integrated over space, we obtain
\equ{\label{testMV}
	&\Dt\intM{\frac{\vr|u|^{2+\kappa}}{2+\kappa}}+(\kappa+1)\intM{\mu_\ep(\vr)|u|^\kappa|\px u|^2 } + \tau\intM{\rho |u|^{k+2}}\\
	&\hspace{2cm}=-\intM{\rho\px\varphi_\ep(\vr) u |u|^\kappa}-\intM{\rho\px W*^{\ep}\rho u |u|^\kappa}+\intM{\rho u|u|^\kappa[\phi_\ep*^{\ep},u]\rho }\\
	&\hspace{2cm}=I_1+I_2+I_3 .
}
Thanks to zero Dirichlet boundary condition for $u$, $I_1$ can be integrated by parts and estimated as
\equ{I_1&=-\intM{\px(\vr^\gamma+\ep\vr^\alpha) u|u|^\kappa}
	=(\kappa+1)\intM{(\vr^\gamma+\ep\rho^\alpha)|u|^\kappa\px u }\\
	&\leq
	\frac{(\kappa+1)}{2}\intM{(\gamma\vr^\gamma+\ep\alpha\vr^\alpha)|u|^\kappa|\px u|^2}+ c(\kappa,\alpha,\gamma)\intM{(\vr^\gamma+\ep\rho^\alpha)|u|^\kappa}.
}
The first term can be absorbed into the left-hand side of \eqref{testMV}, while to control the second term, we use Young's inequality to write for $\kappa\leq 2\alpha$,
\eqh{ \nonumber
\intM{\vr^\gamma|u|^\kappa}&= \intM{\lr{\vr^{\gamma-\frac{\kappa}{2}}+\ep\vr^{\alpha-\frac{\kappa}{2}}}\vr^{\frac{\kappa}{2}}|u|^\kappa}\\
	&\leq \intM{\vr u^2}+ c\intM{\vr^{\frac{2\gamma-\kappa}{2-\kappa}}}+c\intM{\ep^{\frac{2}{2-\kappa}}\vr^{\frac{2\alpha-\kappa}{2-\kappa}}}
	.}
In the inequality above, the first term can be controlled by the energy estimate \eqref{est_mod_app}, the second one is bounded by the $L^1\cap L^\infty$ norm of $\vr$, provided that $\kappa\leq 2\alpha$, which is bounded on the account of \eqref{moments_0} and \eqref{norm_sup} (independently of $\varepsilon$), and the third one disappears when $\ep\to0$ also on account of \eqref{norm_sup}.

For term $I_3 $, we observe that
\begin{align*}
	I_3&=  \iintM{ u(x) |u(x)|^\kappa \phi_\ep (x-y) (u(y)-u(x)) \rho(x) \rho(y)}\\
	& =-  \iintM{u(y) |u(y )|^\kappa \phi_\ep(x-y) (u(y)-u(x)) \rho(x) \rho(y) }, 
\end{align*}
and so we have
\begin{align*}
	I_3&= \frac12 \iintM{\phi_\ep(x-y)\lr{ u(x) |u(x)|^\kappa-u(y) |u(y)|^\kappa}  (u(y)-u(x)) \rho(x) \rho(y)}\leq 0,
\end{align*}
due to the monotonicity of $r\mapsto r |r|^\kappa$.

The last difficulty is to deal with the term $I_2$, and in particular with the attractive part of the potential $W$, for which $|\pa_x W * \rho | \leq c(1+ |x|)$. The constant $c$ depends on the initial data and $T$ via the estimate of $\intM{|x|\rho}$ which follows from \eqref{moments_0} and \eqref{moments_2}. Therefore, we have
\eqh{
	I_2 \leq c\intM{(1+|x|)\vr |u|^{\kappa+1}}&=c\intM{\lr{(1+|x|)^{2+\kappa}\vr}^{\frac{1}{2+\kappa}} \lr{\vr^{\frac{1}{2+\kappa}}|u|}^{\kappa+1}}\\
	&\leq c\intM{(1+|x|)^{2+\kappa}\vr}+\intM{\vr|u|^{\kappa+2}},
} 
where we used Young's inequality  with $p=\kappa+2$, $p'=\frac{\kappa+2}{\kappa+1}$.
The second term can be estimated by the l.h.s. of \eqref{testMV} and the Gronwall argument. To estimate the first one, we use the higher order moment estimate for $\rho$, \eqref{moments_2k}, and then we combine it with the Gronwall argument.
\end{proof}

The rest of this section is devoted to proving the following result.

\begin{theorem} Let $\{(\rho^\ep,\sqrt{\rho^\ep}u^\ep)\}_{\ep>0}$ be a sequence of regular solutions to \eqref{app_eq}. Then there exists a limit pair $(\rho,\sqrt{\rho}u)\in L^\infty((0,T)\times\R)\times L^{2+\kappa}((0,T)\times\R)$ and a (not relablled) subsequence such that
\begin{align}\label{eq:convergences-lsc}
	\left\{\qquad \begin{gathered}
		(\tilde\rho^\ep)^\gamma \to \rho^\gamma,\quad \tilde\rho^\ep u^\ep \to  m\quad\text{strongly in $L^1((0,T)\times\R)$,}\\
		\frac{\tilde\rho^\ep u^\ep}{\sqrt{\tilde\rho^\ep}} \to \frac {m}{\sqrt{\rho}}=\sqrt{\rho}u\quad\text{strongly in $L^2(0,T;L^2(\R))$},\\
		\tilde\rho^\ep(t)\,W^-\ast\tilde\rho^\ep(t) \to \rho(t)\, W^-\ast\rho(t)\quad\text{strongly in $L^1(\R)$ for almost every $t\ge 0$},\\
		\tilde\rho^\ep\,\partial_x W\ast^\ep\tilde\rho^\ep \rightharpoonup \rho\,\partial_x W\ast\rho\quad\text{weakly in $L^1(0,T;L_{loc}^1(\R))$},\\
		\phi_\ep\ast^\ep \tilde\rho^\ep \rightharpoonup^* \phi\ast \rho\quad\text{weakly-$*$ in $L^\infty(0,T;L_{loc}^\infty(\R))$}.
	\end{gathered}\right.
\end{align}
Here, $\tilde\rho^\ep = \mathbbm{1}_{\Omega_\ep}\rho^\ep$ is the extension of $\rho^\ep$ by zero outside of $\Omega_\ep$ and $\ep$ and $R_\ep$ satisfies the following hypthesis: 
\begin{align}\label{ep-R-1}
    \sup_{\ep>0}\,\ep^\beta R_\ep^{1-\alpha}<+\infty.
\end{align}
\end{theorem}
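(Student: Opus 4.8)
The goal is to extract a limit from the $\ep$-independent bounds established in Sections~\ref{Sec:existence}--\ref{sec:compactness} and to identify all the nonlinear/nonlocal terms in the limit. I would organize the argument around three families of compactness: (i) spatial compactness for $\rho^\ep$ (really for powers of $\rho^\ep$), (ii) temporal compactness for $\rho^\ep$ and for the momentum $\rho^\ep u^\ep$, and (iii) strong $L^2$ convergence of $\sqrt{\rho^\ep}u^\ep$ via the Mellet--Vasseur estimate. For (i): from Corollary~\ref{cor_bddr} we have $\partial_x(\rho^\ep)^{\gamma-\frac12}$ bounded in $L^\infty(0,T;L^2(\Omega_\ep))$ together with $\|\rho^\ep\|_{L^\infty}\le c_\infty$ uniformly, hence $\partial_x(\tilde\rho^\ep)^\gamma = \frac{\gamma}{\gamma-\frac12}(\tilde\rho^\ep)^{1/2}\partial_x(\tilde\rho^\ep)^{\gamma-\frac12}$ is bounded in $L^\infty(0,T;L^2_{loc})$, so $(\tilde\rho^\ep)^\gamma$ is bounded in $L^\infty(0,T;H^1_{loc})$. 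For (ii): the continuity equation \eqref{app_eq1} gives $\partial_t(\tilde\rho^\ep)^\gamma = -\gamma(\tilde\rho^\ep)^{\gamma-1}\partial_x(\tilde\rho^\ep u^\ep) = \dots$, which one rewrites using $\rho^\ep u^\ep = (\rho^\ep)^{1/2}\cdot(\rho^\ep)^{1/2}u^\ep$ and the bounds $\sqrt{\rho^\ep}u^\ep\in L^\infty_tL^2_x$, $\mu_\ep(\rho^\ep)|\partial_x u^\ep|^2\in L^1_{t,x}$, to get $\partial_t(\tilde\rho^\ep)^\gamma$ bounded in, say, $L^2(0,T;H^{-1}_{loc})$; Aubin--Lions then yields $(\tilde\rho^\ep)^\gamma\to\rho^\gamma$ strongly in $L^1((0,T)\times\R)$ (localizing in $x$ first, then using the moment bound \eqref{moments_2} and $\ep^\beta R_\ep^{1-\alpha}\to0$ to upgrade to the whole line and to rule out mass escaping to infinity). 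Strong convergence of $(\tilde\rho^\ep)^\gamma$ gives a.e. convergence of $\tilde\rho^\ep$, hence strong convergence of $\tilde\rho^\ep$ in every $L^p((0,T)\times\R)$, $p<\infty$, by dominated convergence with the uniform $L^\infty$ and moment bounds.

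For the momentum, I would write $\tilde\rho^\ep u^\ep$ and note it is bounded in $L^\infty(0,T;L^{2\gamma/(\gamma+1)})$ (by Hölder, from $\sqrt{\rho^\ep}u^\ep\in L^2$ and $\rho^\ep\in L^\gamma$), while $\partial_t(\tilde\rho^\ep u^\ep)$ is controlled in a negative Sobolev space directly from the momentum equation \eqref{app_eq2}: the convective term $\rho^\ep (u^\ep)^2$ is bounded in $L^\infty_tL^1_x$, the viscous term $\mu_\ep(\rho^\ep)\partial_x u^\ep = \sqrt{\mu_\ep(\rho^\ep)}\cdot\sqrt{\mu_\ep(\rho^\ep)}\partial_x u^\ep$ is bounded in $L^2_tL^1_x$ (since $\mu_\ep(\rho^\ep)\le C$ uniformly), the pressure $\delta\calF_\ep$-term is handled via \eqref{est_mod_app} and \eqref{est_potw}, the damping term is in $L^2_{t,x}$, and the alignment term $\rho^\ep[\phi_\ep\ast^\ep,u^\ep]\rho^\ep$ is bounded using $\|\phi_\ep\ast\rho^\ep\|_{L^\infty}$ (uniform, as in the proof of Lemma~\ref{lem:bd-estimate}) together with $\calK_{\phi^\ep}\in L^1_t$. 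Aubin--Lions again gives $\tilde\rho^\ep u^\ep\to m$ strongly in $L^1_{loc}$, and combining the a.e. convergence $\tilde\rho^\ep\to\rho$ with $\sqrt{\rho^\ep}u^\ep\rightharpoonup$ (weak $L^2$ limit) identifies $m = \sqrt\rho\,\cdot(\text{that limit})$, so $m/\sqrt\rho = \sqrt\rho\,u$ makes sense on $\{\rho>0\}$ and vanishes on $\{\rho=0\}$.

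For the strong $L^2$ convergence of $\sqrt{\rho^\ep}u^\ep$, this is the main obstacle and is exactly where the Mellet--Vasseur inequality of Lemma~\ref{lem:MV} is indispensable: it provides uniform control of $\intM{\rho^\ep|u^\ep|^{2+\kappa}}$, hence equi-integrability of $\rho^\ep|u^\ep|^2 = (\rho^\ep|u^\ep|^{2+\kappa})^{\frac{2}{2+\kappa}}(\rho^\ep)^{\frac{\kappa}{2+\kappa}}$, and the higher moment control prevents escape to infinity; by a standard argument (e.g. as in \cite{MV07, VY16}), from $\tilde\rho^\ep u^\ep\to m$ and $(\tilde\rho^\ep)^\gamma\to\rho^\gamma$ a.e. one upgrades to $\tilde\rho^\ep u^\ep/\sqrt{\tilde\rho^\ep}\to m/\sqrt\rho$ strongly in $L^2(0,T;L^2(\R))$. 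The remaining convergences in \eqref{eq:convergences-lsc} are then comparatively soft: $\tilde\rho^\ep\,W^-\ast\tilde\rho^\ep\to\rho\,W^-\ast\rho$ in $L^1_x$ a.e.\ $t$ follows from strong $L^p_x$ convergence of $\tilde\rho^\ep(t)$ and boundedness of $W^-$ on bounded sets together with the moment bound (which pins down mass at infinity, since $W^-(x)\le C(1+|x|^2)$); the weak $L^1_tL^1_{loc,x}$ convergence of $\tilde\rho^\ep\,\partial_x W\ast^\ep\tilde\rho^\ep$ uses $|\partial_x W\ast\rho|\le C(1+|x|)$ from \eqref{est_potw} plus the difference between $\ast$ and $\ast^\ep$ being negligible by \eqref{ep-R-1}, while weak-$*$ convergence of $\phi_\ep\ast^\ep\tilde\rho^\ep\to\phi\ast\rho$ in $L^\infty_tL^\infty_{loc,x}$ follows from the uniform bound on $\|\phi_\ep\ast\rho^\ep\|_{L^\infty}$, the assumed convergence $\phi_\ep\to\phi$ in $L^{\gamma'}$, and the a.e.\ (hence weak) convergence $\tilde\rho^\ep\rightharpoonup\rho$. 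Throughout, one must carefully distinguish $\ast^\ep$ (convolution over $\Omega_\ep$) from $\ast$ (over $\R$), and control the discrepancy using the tail estimates coming from the moments and the decay hypothesis \eqref{ep-R-1} on $\ep^\beta R_\ep^{1-\alpha}$; checking that no error term survives in the passage to the limit — particularly in the two nonlocal terms — is the most technical part.
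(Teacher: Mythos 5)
Your treatment of the density (spatial bound on $\partial_x(\tilde\rho^\ep)^{\gamma-\sfrac12}$ from \eqref{rho_updown}, time derivative from \eqref{app_eq1}, Aubin--Lions, then Vitali/moments to get $L^p$ convergence) and of the three nonlocal terms matches the paper's strategy, up to cosmetic choices of function spaces ($H^1_{loc}/H^{-1}_{loc}$ versus the paper's $BV/\mathrm{Lip}_b'$). The gap is in the momentum. You claim that boundedness of $\tilde\rho^\ep u^\ep$ in $L^\infty(0,T;L^{2\gamma/(\gamma+1)})$ plus a negative-space bound on $\partial_t(\tilde\rho^\ep u^\ep)$ lets you apply Aubin--Lions ``again'' to get $\tilde\rho^\ep u^\ep\to m$ strongly in $L^1_{loc}$. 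Aubin--Lions needs uniform spatial compactness of the momentum itself (a bound in a space compactly embedded in the target), and you never produce one. None is available here: $\partial_x(\rho^\ep u^\ep)=u^\ep\partial_x\rho^\ep+\rho^\ep\partial_x u^\ep$, where $\partial_x u^\ep$ is only controlled with the degenerate weight $\mu_\ep(\rho^\ep)\sim\rho^\gamma$ and $\partial_x\rho^\ep$ only through $\partial_x(\rho^\ep)^{\gamma-\sfrac12}$; writing $u^\ep\partial_x\rho^\ep\sim\sqrt{\rho^\ep}u^\ep\,(\rho^\ep)^{1-\gamma}\partial_x(\rho^\ep)^{\gamma-\sfrac12}$ exhibits the factor $(\rho^\ep)^{1-\gamma}$, which blows up near vacuum for $\gamma>1$. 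This is exactly the point where the present setting differs from the classical Mellet--Vasseur/Bresch--Desjardins case $\mu(\rho)=\rho$, where the BD entropy controls $\partial_x\sqrt{\rho}$ and a $W^{1,1}$ bound on $\rho u$ does follow. Since your strong $L^2$ convergence of $\sqrt{\rho^\ep}u^\ep$ (and hence the strong $L^1$ convergence of the momentum asserted in \eqref{eq:convergences-lsc}) is built on the a.e.\ convergence of $\tilde\rho^\ep u^\ep$ obtained from this step, the whole second half of the argument is unsupported.

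The paper circumvents this precisely by introducing the auxiliary weighted momentum $(\tilde\rho^\ep)^{\gamma-\sfrac12}m^\ep=(\rho^\ep)^{\gamma+\sfrac12}u^\ep$: for this quantity a uniform $L^2(0,T;BV(\R))$ bound is available (its spatial derivative only involves $\sqrt{\rho^\ep}u^\ep\,\partial_x(\rho^\ep)^{\gamma}$ and $\sqrt{\rho^\ep}\,\mu(\rho^\ep)\partial_x u^\ep$, both controlled by \eqref{est_mod_app}, \eqref{rho_updown} and \eqref{norm_sup}), together with a time-derivative bound in $L^2(0,T;\mathrm{Lip}_b(\R)')$ from the velocity equation. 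Aubin--Lions then gives a.e.\ convergence of $(\rho^\ep)^{\gamma+\sfrac12}u^\ep$, and dividing by $(\tilde\rho^\ep)^{\gamma}$ on the sets $\{\rho>\delta\}$ yields a.e.\ convergence of $\sqrt{\rho^\ep}u^\ep$ there; only then do the Mellet--Vasseur bound of Lemma~\ref{lem:MV} and a Vitali argument upgrade this to strong convergence in $L^2((0,T)\times\R)$, with the momentum convergence obtained a posteriori as a product of strongly convergent factors. To make your proposal work you would need either this weighted-momentum device or some substitute source of spatial compactness for $\rho^\ep u^\ep$; as written, the step fails.
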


\subsection{Convergence of the density}

\subsubsection{Strong convergence of $(\rho^\ep)^\gamma$}
From the energy estimate \eqref{energy}, we have that
\[
\sup_{\ep>0}\sup_{t\in[0,T]} \int_{\R} \int_0^{\tilde\rho^\ep(t,x)}\varphi(r)\d r\,\dx \le \frac{\ep^\beta}{1-\alpha}(2R_\ep)^{1-\alpha} + c,
\]
where $c>0$ is a constant uniform in $\ep>0$. In particular, if $\sup_{\ep>0}\,\ep^\beta R_\ep^{1-\alpha}<+\infty$, then 
\[
\sup_{\ep>0}\sup_{t\in[0,T]} \|(\tilde\rho_t^\ep)^\gamma\|_{L^1} <+\infty.
\]
Furthermore, from the bound on the derivative of the pressure, we obtain for all $\psi\in C^1(\R)$,
\begin{align*}
\int_\R \partial_x \psi \,(\tilde\rho^\ep(t))^\gamma\,\dx &= \int_{\Omega_\ep} \partial_x \psi\,(\rho^\ep(t))^\gamma\,\dx \\
&\hspace{-4em}= - \int_{\Omega_\ep} \psi\, \partial_x (\rho^\ep(t))^\gamma\,\dx + \psi(R_\ep)(\rho^\ep(t,R_\ep))^\gamma - \psi(-R_\ep)(\rho^\ep(t,-R_\ep))^\gamma \\
&\hspace{-4em}= - \frac{\gamma}{\gamma-\sfrac{1}{2}}\int_{\Omega_\ep} \psi\, \sqrt{\rho^\ep(t)}\,\partial_x (\rho_t^\ep)^{\gamma-\sfrac{1}{2}}\,\dx + \psi(R_\ep)(\rho^\ep(t,R_\ep))^\gamma - \psi(-R_\ep)(\rho^\ep(t,-R_\ep))^\gamma \\
&\hspace{-4em}\le c\|\psi\|_{\sup}\left( \|\rho_0\|_{L^1} + \sup_{\ep>0}\sup_{t\in[0,T]}\|\partial_x (\rho^\ep(t))^{\gamma-\sfrac{1}{2}}\|_{L^2(\Omega_\ep)}^2\right) <+\infty.
\end{align*}
Taking the supremum over all $\psi\in C^1(\R)$ with $\|\psi\|_{\sup}\le 1$ yields
\[
\sup_{\ep>0}\sup_{t\in[0,T]}\|D(\tilde\rho^\ep)^\gamma\|_{TV} <+\infty,
\]
which together with the previous bound implies that
\[
\bigl((\tilde\rho^\ep)^\gamma \bigr)_{\ep>0} \subset L^\infty(0,T;BV(\R))\quad\text{is bounded}.
\]
We now prove the equi-continuity property of the family of curves $\{t\mapsto (\tilde\rho_t^\ep)^\gamma\}_{\ep>0}$. Owing to the regularity of the curve $t\mapsto \rho_t^\ep$, we can take the temporal derivative to deduce
\begin{align*}
\frac{\d}{\dt} \int_\R \psi\,(\tilde\rho^\ep)^\gamma\,\dx &= \gamma \int_{\Omega_\ep} \psi\, (\rho^\ep)^{\gamma-1}\,\partial_x(\rho^\ep u^\ep)\,\dx \\
&= \frac{\gamma}{\gamma-\sfrac{1}{2}}\int_{\Omega_\ep} \psi\, \sqrt{\rho^\ep} u_t^\ep\,\partial_x(\rho^\ep)^{\gamma-\sfrac{1}{2}}\,\dx + \int_{\Omega_\ep} \psi\, \mu(\rho^\ep)\, \partial_x u^\ep\,\dx \\
&\le c_\gamma \|\psi\|_{L^\infty}\Bigl( \|u^\ep\|_{L^2(\tilde\rho^\ep)}\|\partial_x(\rho^\ep)^{\gamma-\sfrac{1}{2}}\|_{L^2(\Omega_\ep)} + \|\tilde\rho^\ep\|_{L^\gamma}^\gamma \|\partial_x u^\ep\|_{L^2(\mu(\tilde\rho_t^\ep))}\Bigr),
\end{align*}
which, due to the energy estimate \eqref{energy} and \eqref{rho_updown}, implies that the sequence
\[
\bigl(\partial_t (\tilde\rho_t^\ep)^\gamma\bigr)_{\ep>0} \subset L^2(0,T;L^1(\R))  \quad\text{is bounded}.
\]
Since the embedding $BV(\R)\hookrightarrow L^1(\R)$, the Aubin--Lions lemma then yields the relative compactness of the sequence $((\tilde\rho^\ep)^\gamma)_{\ep>0}$ in $C([0,T];L^1(\R))$, i.e.\ there exists a curve $\chi\in C([0,T];L^1(\Omega))\cap L^\infty(0,T;BV(\R))$ and a (not relabelled) subsequence, such that
\[
(\tilde\rho^\ep(t))^\gamma \to \chi(t)\quad\text{strongly in $L^1(\R)$ and pointwise-a.e.\ for every $t\in[0,T]$}.
\] 

\subsubsection{Strong convergence result} Setting $\rho(t)\coloneq\chi^{1/\gamma}(t)$ for every $t\in[0,T]$, we notice that also $\tilde\rho^\ep(t)\to \rho(t)$ pointwise. Since $(\tilde\rho^\ep(t))_{\ep>0}$ is tight and the uniform $L^\gamma$-integrability gives
\[
\int_B \rho^\ep\,\dx \le 
|B|^{1-\sfrac{1}{\gamma}}\|\rho^\ep\|_{L^\gamma}\qquad\text{for every Borel set $B\subset\R$},
\]
this implies that $(\tilde\rho^\ep(t))_{\ep}$ is equi-absolutely continuous for every $t\in[0,T]$, and we conclude with the Vitali convergence theorem that
\[
\tilde\rho^\ep(t) \to \rho(t)\quad\text{strongly in $L^1(\R)$ and pointwise-a.e.\ in $\R$ for every $t\in[0,T]$}.
\]
In particular, we obtain the strong convergence in every $L^p(\R)$, $p\ge 1$.

\subsection{Convergence of the momentum} 

\subsubsection{Weak convergence result} We begin with a weak convergence result for the sequence of approximate momentum $(m^\ep\coloneq\tilde\rho^\ep u^\ep)_{\ep>0}$, relying only on energy estimates obtained in \eqref{sec:energy-approx} and Lemma~\ref{lem:MV}. From the energy estimate \eqref{energy}, Lemma~\ref{lem:MV}, and \eqref{norm_sup} we easily find that
\[
(m^\ep)_{\ep>0}\subset L^\infty(0,T;L^1\cap L^{1+\kappa}(\R))\quad\text{is a bounded sequence.}
\]
Moreover, using the momentum equation, we have for all $\psi\in \text{Lip}_b(\R)$,
\begin{align*}
\frac{\d}{\dt}\intO{\psi\,m^\ep} 
% &= \intM{\partial_x\psi\Bigl(\rho^\ep |u^\ep|^2-\partial_x u^\ep\,\mu_\ep(\rho^\ep)\Bigr)} + \intM{\psi\,\rho^\ep\Bigl([\phi_\ep\ast^\ep,u^\ep]\rho^\ep- \partial_x\delta\calF_\ep(\rho^\ep)-\tau u^\ep\Bigr)} \\
&\le \|\partial_x\psi\|_{\sup}\Bigl(\|u^\ep\|_{L^2(\rho^\ep)}^2 +  \sqrt{\|\mu_\ep(\rho_t^\ep)\|_{L^1}}\|\partial_x u^\ep\|_{L^2(\mu_\ep(\rho^\ep))}\Bigr) \\
&\hspace{4em}+ \|\psi\|_{\sup}\Bigl( \sqrt{\|\phi_\ep *^\ep \rho^\ep\|_{L^\infty}\calK_\phi(\rho^\ep,u^\ep)} + \|\partial_x\delta\calF_\ep(\rho^\ep)\|_{L^2(\rho^\ep)} + \tau\|u^\ep\|_{L^2(\rho^\ep)}\Bigr),
\end{align*}
thereby implying that the sequence
\[
(\partial_t m^\ep)_{\ep>0} \subset L^2(0,T;\text{Lip}_b(\R)')\quad\text{is bounded}.
\]
Consequently, we obtain from an Arzela-Ascoli-type argument, the existence of a (not relabelled) subsequence and a curve $m\in C([0,T];L^1\cap L^{1+\kappa}(\R))$ such that
\[
m^\ep(t) \rightharpoonup m(t)\quad\text{weakly in $L^1\cap L^{1+\kappa}(\R)$ for every $t\in[0,T]$.}
\]
Owing to the weak-$*$ lower semicontinuity of the functional (cf.\ Proposition~\ref{prop:lsc} below)
\[
(\rho,m)\mapsto \begin{cases}\displaystyle
\intO{\frac{|m|^2}{\rho}} &\text{if\; $m\,\dx \ll \rho\,\dx$},\\
+\infty &\text{otherwise},
\end{cases}
\]
we obtain
\[
\intO{\frac{|m(t)|^2}{\rho(t)}} \le \liminf_{\ep\to 0} \intO{\frac{|m^\ep(t)|^2}{\tilde\rho^\ep(t)}} <+\infty\qquad\text{for every $t\in[0,T]$}.
\]
Therefore, we can define
\equ{\nonumber%\label{def:u_lim}
u(t,x)\coloneq \begin{cases}\displaystyle
\frac{m}{\rho} &\text{for\; } \rho(t,x)>0,\\
0 &\text{for\; } \rho(t,x)=0.
\end{cases}\qquad \text{for every $t\in[0,T]$}.
}

\subsubsection{Strong convergence result for an auxiliary sequence}
To prove the strong convergence of the momentum, we first prove the convergence of the sequence $((\tilde\rho^\ep)^{\gamma-\sfrac{1}{2}}m^\ep)_{\ep>0}$.

For almost every $t\in[0,T]$, we easily deduce
\[
\sup_{\ep>0}\int_0^T \|(\tilde\rho^\ep)^{\gamma-\sfrac{1}{2}}m^\ep\|_{L^1}^2\,\dt 
% \le c_\infty^{\gamma-1}\sup_{\ep>0}\int_0^T \|m_t^\ep\|_{L^1}\,\dt 
\le c_\infty^{(2\gamma-1)} \sup_{\ep>0}\int_0^T \|u^\ep\|_{L^2(\rho^\ep)}^2\,\dt <+\infty.
\]
Furthermore, we obtain, for almost every $t\in[0,T]$,
\begin{align*}
\int_\R \partial_x\psi\, (\tilde\rho^\ep)^{\gamma-\sfrac{1}{2}}m^\ep\,\dx &= \int_{\Omega_\ep} \partial_x\psi\, (\rho^\ep)^{\gamma+\sfrac{1}{2}} u^\ep\,\dx \\
&= -\frac{\gamma+\sfrac{1}{2}}{\gamma}\int_{\Omega_\ep} \psi\,\sqrt{\rho^\ep} u^\ep\,\partial_x (\rho^\ep)^{\gamma}\,\dx - \frac{1}{\gamma}\intM{ \psi\,\sqrt{\rho^\ep}\,\mu(\rho^\ep)\,\partial_x u^\ep} \\
&\le c_\gamma\sqrt{c_\infty}\|\psi\|_{\sup}\Bigl( \|u^\ep\|_{L^2(\rho^\ep)}\|\partial_x (\rho_t^\ep)^{\gamma-\sfrac{1}{2}}\|_{L^2(\Omega_\ep)} + \|\tilde\rho^\ep\|_{L^\gamma}^\gamma\|\partial_x u_t^\ep\|_{L^2(\mu(\rho^\ep))}\Bigr),
\end{align*}
for some constant $c_\gamma>0$ depending only on $\gamma\ge 1$. Therefore, taking the supremum over all $\psi\in C^1(\R)$ with $\|\psi\|_{\sup}\le 1$, squaring and then integrating over time yields
\[
\sup_{\ep>0}\int_0^T \|(\tilde\rho^\ep)^{\gamma-\sfrac{1}{2}}m^\ep\|_{TV}^2\,\dt <+\infty,
\]
which, together with the previous estimate, implies that
\[
\bigl((\tilde\rho^\ep)^{\gamma-\sfrac{1}{2}}m^\ep\bigr)_{\ep>0} \subset L^2(0,T;BV(\R))\quad\text{is bounded}.
\]

As in the prove for the convergence of the density, we need to show the equi-continuity property of the curves $\{t\mapsto (\tilde\rho^\ep)^{\gamma-\sfrac{1}{2}}m^\ep\}_{\ep>0}$ in an appropriate function space. Recall that $u^\ep$ satisfies
\begin{align*}
\partial_t u^\ep + u^\ep\,\partial_x u^\ep = -\partial_x\delta\calF_\ep(\rho^\ep) + \frac{1}{\rho^\ep} \partial_x(\mu_\ep(\rho^\ep)\partial_x u^\ep) + [\phi_\ep\ast^\ep,u^\ep]\rho^\ep 
\end{align*}
for $\rho^\ep$-almost every $(t,x)\in(0,T)\times\Omega_\ep$. For any $\psi\in \text{Lip}_b(\R)$, we then compute to obtain
\begin{align*}
\frac{\d}{\dt}\int_\R \psi\, (\tilde\rho^\ep)^{\gamma-\sfrac{1}{2}}m^\ep\,\dx &= \int_{\Omega_\ep} \psi\, u^\ep\,\partial_t (\rho^\ep)^{\gamma+\sfrac{1}{2}}\,\dx + \int_{\Omega_\ep} \psi \,(\rho^\ep)^{\gamma+\sfrac{1}{2}}\,\partial_t u^\ep\,\dx \\
&\hspace{-8em}= -(\gamma+\sfrac{1}{2})\int_{\Omega_\ep} \psi\,u^\ep (\rho^\ep)^{\gamma-\sfrac{1}{2}}\partial_x m^\ep\,\dx - \int_{\Omega_\ep} \psi\, u^\ep (\rho^\ep)^{\gamma+\sfrac{1}{2}} \partial_x u^\ep\,\dx \\
&\hspace{-8em}-\int_{\Omega_\ep} \psi\,(\rho^\ep)^{\gamma+\sfrac{1}{2}}\partial_x\delta\calF_\ep(\rho^\ep)\,\dx + \int_{\Omega_\ep} \psi\,(\rho^\ep)^{\gamma-\sfrac{1}{2}} \partial_x(\mu_\ep(\rho^\ep)\partial_x u^\ep)\,\dx + \int_{\Omega_\ep} \psi\, (\rho^\ep)^{\gamma+\sfrac{1}{2}}[\phi\ast,u^\ep]\rho^\ep\,\dx \\
&\hspace{-8em}= -(\gamma-\sfrac{1}{2}) \int_{\Omega_\ep} \psi\, u^\ep (\rho^\ep)^{\gamma+\sfrac{1}{2}} \partial_x u^\ep\,\dx + \int_{\Omega_\ep} \partial_x\psi\,(u^\ep)^2\,(\rho^\ep)^{\gamma+\sfrac{1}{2}} \, \dx -\int_{\Omega_\ep} \psi\,(\rho^\ep)^{\gamma+\sfrac{1}{2}}\partial_x\delta\calF_\ep(\rho^\ep)\,\dx \\
&\hspace{-8em} -\int_{\Omega_\ep} \partial_x\psi\,(\rho^\ep)^{\gamma-\sfrac{1}{2}} \mu_\ep(\rho^\ep)\,\partial_x u^\ep\,\dx - \int_{\Omega_\ep} \psi\,\partial_x(\rho^\ep)^{\gamma-\sfrac{1}{2}} \mu_\ep(\rho^\ep)\,\partial_x u^\ep\,\dx + \int_{\Omega_\ep} \psi\, (\rho^\ep)^{\gamma+\sfrac{1}{2}}[\phi\ast,u^\ep]\rho^\ep\,\dx, \\
&\hspace{-8em}= -(\gamma-\sfrac{1}{2})I_1 + I_2 - I_3 - I_4 - I_5 + I_6.
\end{align*}
% \begin{align*}
%     \int_{\Omega_\ep} \psi\,u_t^\ep (\rho_t^\ep)^{\gamma-\sfrac{1}{2}}\partial_x m_t^\ep\,\dx &= \int_{\Omega_\ep} \psi\,u_t^\ep (\rho_t^\ep)^{\gamma+\sfrac{1}{2}}\partial_x u_t^\ep\,\dx + \int_{\Omega_\ep} \psi\,u_t^\ep \,u_t^\ep\,(\rho_t^\ep)^{\gamma-\sfrac{1}{2}}\partial_x \rho_t^\ep\,\dx \\
%     &\hspace{-4em}= \int_{\Omega_\ep} \psi\,u_t^\ep (\rho_t^\ep)^{\gamma+\sfrac{1}{2}}\partial_x u_t^\ep\,\dx + \frac{1}{\gamma+\sfrac{1}{2}}\int_{\Omega_\ep} \psi\,u_t^\ep \,u_t^\ep\,\partial_x (\rho_t^\ep)^{\gamma+\sfrac{1}{2}}\,\dx \\
%     &\hspace{-4em}= \biggl(1-\frac{2}{\gamma+\sfrac{1}{2}}\biggr)\int_{\Omega_\ep} \psi\,u_t^\ep (\rho_t^\ep)^{\gamma+\sfrac{1}{2}}\partial_x u_t^\ep\,\dx - \frac{1}{\gamma+\sfrac{1}{2}}\int_{\Omega_\ep} \partial_x\psi\,(u_t^\ep)^2\,(\rho_t^\ep)^{\gamma+\sfrac{1}{2}} \, \dx
% \end{align*}
where each of the terms above can be estimated by
\begin{align*}
I_1 &=\intM{\psi\, \sqrt{\rho^\ep}u^\ep\, (\rho^\ep)^{\gamma} \partial_x u^\ep} \le c_\infty^{\gamma}\|\psi\|_{\sup}\|u^\ep\|_{L^2(\rho^\ep)}\|\partial_x u^\ep\|_{L^2(\mu(\rho^\ep))} \\
I_2 &=\int_{\Omega_\ep} \partial_x\psi\,(u^\ep)^2\,(\rho^\ep)^{\gamma+\sfrac{1}{2}} \, \dx \le c_\infty^\gamma\|\partial_x\psi\|_{\sup}\| u^\ep\|_{L^2(\rho^\ep)}\\
I_3 &=\int_{\Omega_\ep} \psi\,(\rho^\ep)^{\gamma}\sqrt{\rho^\ep}\partial_x\delta\calF_\ep(\rho^\ep)\,\dx \le c_\infty^{\gamma}\|\psi\|_{\sup}\|\partial_x\delta\calF_\ep(\rho^\ep)\|_{L^2(\rho^\ep)} \\
I_4 &=\int_{\Omega_\ep} \partial_x\psi\,(\rho^\ep)^{\gamma-\sfrac{1}{2}} \mu_\ep(\rho^\ep)\,\partial_x u^\ep\,\dx \le c_\infty^{\gamma-\sfrac{1}{2}}\|\partial_x\psi\|_{\sup}\sqrt{\|\mu_\ep(\tilde\rho^\ep)\|_{L^1}}\|\partial_x u^\ep\|_{L^2(\mu(\rho_t^\ep))} \\
I_5 &=\int_{\Omega_\ep} \psi\,\partial_x(\rho^\ep)^{\gamma-\sfrac{1}{2}} \mu_\ep(\rho^\ep)\,\partial_x u^\ep\,\dx \le \gamma c_\infty^\gamma\|\psi\|_{\sup} \|\partial_x(\rho^\ep)^{\gamma-\sfrac{1}{2}}\|_{L^2(\Omega_\ep)}\|\partial_x u^\ep\|_{L^2(\mu_\ep(\rho_t^\ep))} \\
I_6 &= \int_{\Omega_\ep} \psi\, (\rho^\ep)^{\gamma+\sfrac{1}{2}}[\phi_\ep\ast^\ep,u^\ep]\rho^\ep\,\dx \le c_\infty^{\gamma-\sfrac{1}{2}}\|\psi\|_{\sup}\sqrt{\|\phi_\ep *^\ep \rho^\ep\|_{L^\infty(\Omega_\ep)}\calK_\phi(\rho^\ep,u_t^\ep)}.
\end{align*}
Since all the terms on the right-hand side are 2-integrable in time, we deduce that
\[
\bigl(\partial_t[(\tilde\rho^\ep)^{\gamma-\sfrac{1}{2}}m^\ep]\bigr)_{\ep>0} \subset L^2(0,T; \text{Lip}_b(\R)').
\]
With the compact embedding $BV(\R)\hookrightarrow L^1(\R)$ and the continuity of $L^1(\R)\hookrightarrow \text{Lip}_b(\R)'$, we again apply the Aubin-Lions lemma to obtain the existence of a (not relabelled) subsequence and a curve $\eta\in L^2([0,T];L^1(\R))\cap L^2([0,T];BV(\R))$, such that
\[
(\tilde\rho^\ep)^{\gamma-\sfrac{1}{2}}m^\ep \to \eta\quad\text{strongly in $L^2(0,T;L^1(\R))$ and pointwise-a.e.\ in $[0,T]\times\R$}.
\] 
To identify the limit $\eta$, we simply notice that for every $\psi\in C_c([0,T]\times \R)$, we have that
\begin{align*}
\intTO{\psi\,\eta} = \lim_{\ep\to 0} \intTO{\psi\, (\tilde\rho^\ep)^{\gamma-\sfrac{1}{2}}m^\ep} = \intTO{\psi\, \rho^{\gamma-\sfrac{1}{2}}\, m },
\end{align*}
where the second equality follows from the strong convergence of $\rho^\ep$ in any $L^p$-space and the weak convergence of $m^\ep$ in $L^{2+\kappa}([0,T]\times\R)$. By density, we then conclude that
\[
(\tilde\rho^\ep)^{\gamma-\sfrac{1}{2}}m^\ep \to \rho^{\gamma-\sfrac{1}{2}}\, m \quad\text{strongly in $L^2([0,T];L^1(\R))$ }
\]  
and 
\[(\tilde\rho^\ep)^{\gamma-\sfrac{1}{2}}m^\ep \to \rho^{\gamma+\sfrac{1}{2}}\, u \quad \text{a.e.\ in $[0,T]\times\R$}.\]

\subsubsection{Strong convergence of $m^\ep/\sqrt{\tilde\rho^\ep}$} 

An important consequence of the strong convergence of the sequence $(\tilde\rho^\ep)^{\gamma-\sfrac{1}{2}}m^\ep$ and of the uniform (in $\ep>0$) Mellet-Vasseur estimate from Lemma~\ref{lem:MV} is the convergence 
\begin{align}\label{eq:kin-dens-conv}
g^\ep\coloneq\sqrt{\tilde\rho^\ep}u^\ep \to  \sqrt{\rho}u =: g \quad\text{strongly in $L^2([0,T]\times\R)$},
\end{align}
which we now prove, resulting in the convergence of the convective term.

Consider, as before, the set $A_\delta\coloneq\{(t,x)\in [0,T]\times\R:\rho_t(x)>\delta\}$, $\delta>0$. Then,
\[
g^\ep(t,x)=\frac{(\tilde\rho^\ep(t,x))^{\gamma-\sfrac{1}{2}}m^\ep(t,x)}{(\tilde\rho^\ep(t,x))^{\gamma}}\to \frac{(\rho(t,x))^{\gamma+\sfrac{1}{2}}\, u(t,x)}{(\rho(t,x))^{\gamma}} = g(t,x)\quad\text{for a.e.\ $(t,x)\in A_\delta$.}
\]
We then use Lemma~\ref{lem:MV} and the second moment estimate to obtain
\begin{align*}
\iint_{[0,T]\times B_n^c} |g^\ep|^2 \mathbbm{1}_{A_\delta}\,\dx\,\dt 
% &= \iint_{[0,T]\times B_n^c} (\tilde\rho^\ep)^{\kappa/(2+\kappa)}(\tilde\rho^\ep)^{2/(2+\kappa)}|u^\ep|^2\,\mathbbm{1}_{A_\delta}\,\dx\,\dt \\
&\le n^{-\frac{2\kappa}{2+\kappa}}\left( \iint_{[0,T]\times \R} |x|^2\,\tilde\rho^\ep\dx\,\dt\right)^{\frac{\kappa}{2+\kappa}}\|u^\ep\|_{L^{2+\kappa}([0,T]\times\R,\tilde\rho^\ep)}^2,
\end{align*}
which implies the existence of some constant $c_2>0$, independent of $n\ge 1$, such that
\[
\sup_{\ep>0} \iint_{[0,T]\times B_n^c} |g^\ep|^2 \mathbbm{1}_{A_\delta}\,\dx\,\dt \le c_2 n^{-\frac{2\kappa}{2+\kappa}}\qquad\forall\,n\ge 1,
\]
i.e., $(g^\ep)_{\ep>0}$ is uniformly integrable in $L^2(A_\delta)$. Using again Lemma~\ref{lem:MV} together with the uniform $L^\gamma$-bound of $(\tilde\rho^\ep)_{\ep>0}$, we obtain for every Borel set $B\subset[0,T]\times\R$,
\begin{align}\label{smalness}
\iint_B |g^\ep|^2 \mathbbm{1}_{A_\delta}\,\dx\,\dt \le |B|^{\frac{\gamma}{\gamma-1}\frac{\kappa}{2+\kappa}}\|\tilde\rho^\ep\|_{L^\gamma([0,T]\times\R)}^{\frac{\kappa}{2+\kappa}}\|u^\ep\|_{L^{2+\kappa}([0,T]\times\R,\tilde\rho^\ep)}^2,
\end{align}
which gives the equi-absolute continuity of $(g^\ep)_{\ep>0}$ in $L^2(A_\delta)$, and therewith the convergence
\begin{align*}\label{eq:g-conv-local}
\|g^\ep - g\|_{L^2(A_\delta)} \longrightarrow 0\qquad\text{by Vitali's convergence theorem.}
\end{align*}

On the set when the density is close to zero, Lemma~\ref{lem:MV} and estimate similar to \eqref{smalness} imply that
\begin{align*}
\lim_{\delta\to 0} \limsup_{\ep\to 0}\iint_{A_\delta^c} |g^\ep - g|^2\,\dx\,\dt = 0,
\end{align*}
which concludes the proof of \eqref{eq:kin-dens-conv}.

\subsection{Convergence of $\tilde\rho^\ep\,W\ast\tilde\rho^\ep$}\label{ssec:conv_interW} 
We first notice that  
\bq\label{hm_conv}
\|\tilde\rho^\e_t - \rho_t\|_{L^1_2} \to 0,\qquad \|f\|_{L^1_\alpha} \coloneq \int_\R |x|^\alpha f\,\d x.
\eq
Indeed, for any $\eta \geq 1$, we observe
\[\begin{aligned}
\|\tilde\rho^\e - \rho\|_{L^1_2} &= \|\tilde\rho^\e - \rho\|_{L^1_2(B(0,\eta))} + \|\tilde\rho^\e - \rho\|_{L^1_2(\R \backslash B(0,\eta))}\cr
&\leq  \eta^2\|\tilde\rho^\e - \rho\|_{L^1(B(0,\eta))} + \frac{1}{\eta^\kappa}\lt(\|\tilde\rho^\e\|_{L^1_{2+\kappa}} + \|\rho\|_{L^1_{2+\kappa}}\rt).
\end{aligned}\]
Thus, taking the limit $\e \to 0$, and then sending $\eta \to \infty$ give \eqref{hm_conv}.

Based on the assumptions on $W$, we have that
\[
\lt|\int_\R W(x-y) f(y) \d y \rt| 
% \leq c \int_\R (1 + |x|^2 + |y|^2)f(y) \d y 
\leq c(1 + |x|^2)\|f\|_{L^1} + c\|f\|_{L^1_2}\qquad \forall\, f \in L^1_2(\R).
\]
Then, we estimate
\[\begin{aligned}
\|\tilde\rho^\e W * \tilde\rho^\e - \rho W * \rho\|_{L^1} 
&\leq \|\tilde\rho^\e (W * (\tilde\rho^\e - \rho))\|_{L^1} + \|(\tilde\rho^\e - \rho) (W * \rho)\|_{L^1}\cr
% &\leq c\lt(\|\tilde\rho^\e\|_{L^1} + \|\tilde\rho^\e\|_{L^1_2} \rt)\|\tilde\rho^\e - \rho\|_{L^1} + c\|\tilde\rho^\e - \rho\|_{L^1_2}\cr
% &\quad + c\lt(\|\tilde\rho^\e - \rho\|_{L^1} + \|\tilde\rho^\e - \rho\|_{L^1_2}\rt)\|\rho\|_{L^1} + c\|\tilde\rho^\e - \rho\|_{L^1}\|\rho\|_{L^1_2}\cr
&\leq c\lt(\|\tilde\rho^\e - \rho\|_{L^1} + \|\tilde\rho^\e - \rho\|_{L^1_2}\rt)
\end{aligned}\]
for some $c > 0$ independent of $\e>0$. Using \eqref{hm_conv}, we then obtain
\[
\tilde\rho^\ep\,W\ast\tilde\rho^\ep \to \rho\, W\ast\rho\quad\text{strongly in $L^1(\R)$ for almost every $t\ge 0$}.
\]

\subsection{Convergence of $\tilde\rho^\ep\,\partial_x W\ast\tilde\rho^\ep$}

For any $\varphi \in L^\infty(0,T;L^\infty_{loc}(\R))$ and compact $K\subset \R$, we estimate
\[\begin{aligned}
&\int_0^T\!\! \int_K \varphi(t,x)\lt( \tilde\rho^\ep\,\partial_x W\ast\tilde\rho^\ep - \rho\,\partial_x W\ast\rho\rt) \dx \dt \cr
&\quad = \int_0^T\!\! \int_K \varphi\, (\tilde\rho^\ep - \rho)\, \pa_x W * \tilde\rho^\e\,\dx\dt + \int_0^T\!\! \int_K \varphi\, \rho\, \pa_x W * (\tilde\rho^\ep - \rho)\,\dx\dt =: I_\e + II_\e.
\end{aligned}\]
For $I_\ep$, we have
\[
|I_\e| \leq C(c_W, K, T) \|\tilde\rho^\e - \rho\|_{L^\infty(0,T;L^1 (K))}\|\varphi\|_{L^\infty} \sup_{0 \leq t \leq T}(\|\tilde\rho^\e(t)\|_{L^1} + \|\tilde\rho^\e(t)\|_{L^1_1})
\to 0
\]
as $\e \to 0$. As for $II_\e$, we obtain
\[\begin{aligned}
|II_\e| &\leq C(c_W, K,T)\|\rho\|_{L^1}\|\varphi\|_{L^\infty}\sup_{0 \leq t \leq T}(\|(\tilde\rho^\e- \rho)(t)\|_{L^1} + \|(\tilde\rho^\e - \rho)(t)\|_{L^1_1}) \to 0\cr
\end{aligned}\]
as $\e \to 0$. Together, this yields the convergence
\[
\tilde\rho^\ep\,\partial_x W\ast\tilde\rho^\ep \rightharpoonup \rho\,\partial_x W\ast\rho\quad\text{weakly in $L^1(0,T;L_{loc}^1(\R))$}.
\]

\subsection{Convergence of $\phi_{\ep}\ast \tilde\rho^\ep$}

Similarly, we consider a test function $\varphi \in L^1(0,T;L^1_{loc}(\R))$ and a compact set $K \subset \R$. Let us choose $\eta \geq 1$ such that $|K| \leq \eta$. Then, 
\[\begin{aligned}
&\lt|\int_0^T\!\! \int_K \int_\R \varphi(t,x) \phi(x-y)(\tilde\rho^\e - \rho)(t,y)\,\dy \dx \dt\rt|\cr
&\hspace{4em} = \lt|\int_0^T\!\! \int_K \lt(\int_{|y| \leq \eta + 1} + \int_{|y| \geq \eta + 1} \rt)\varphi(t,x) \phi(x-y)(\tilde\rho^\e - \rho)(t,y)\,\dy \dx \dt \rt| \cr
&\hspace{4em} \leq \|\tilde \rho^\e - \rho\|_{L^\infty((0,T) \times (-\eta - 1 , \eta + 1))}\|\phi 1_{|\cdot| \leq 2\eta + 1}\|_{L^1}\|\varphi\|_{L^1((0,T) \times K)} \cr
&\hspace{4em}\qquad + \frac{1}{(\eta + 1)^2} \|\phi 1_{|\cdot|\geq 1}\|_{L^\infty}\|\varphi\|_{L^1((0,T) \times K)} \sup_{0 \leq t \leq T} \Bigl(\|\tilde\rho^\ep(t)\|_{L^1_2} + \|\rho(t)\|_{L^1_2}\Bigr) \cr
&\hspace{4em} \leq C\|\tilde \rho^\e - \rho\|_{L^\infty((0,T) \times (-\eta - 1 , \eta + 1))} + \frac C{(\eta + 1)^2},
\end{aligned}\]
where $C>0$ is independent of $\e$ and $\eta$. Thus, by sending $\e \to 0$ first and then $\eta \to \infty$, we conclude the desired convergence, i.e.
\[
\phi\ast \tilde\rho^\ep \rightharpoonup^* \phi\ast \rho\quad\text{weakly-$*$ in $L^\infty(0,T;L^\infty_{loc}(\R))$}.
\]

\section{Recovery of the original system: existence of weak solutions}

To recover weak solutions to \eqref{main_eq}, we make use of the convergences provided in Section~\ref{sec:compactness} to first prove that the functional inequality obtain in Section~\ref{Sec:existence} (cf.\ \eqref{est_mod_app}) is retained when $\ep\to 0$ (cf.\ Theorem~\ref{thm:limit_energy}). We then conclude in the same result that the limit pair $(\rho,\sqrt{\rho}u)$ obtained in Section~\ref{sec:compactness} satisfies the weak formulation \eqref{2.3} and \eqref{2.4}. The main difficulty is that, while extension of $\rho^\ep$ to the whole space is possible,  for the gradient of $\rho^\ep$ it is not, due to the discontinuity at the boundary of $\Omega_\ep$. Therefore, we reformulate our functional inequalities in terms of measures that allow us to integrate over the whole space and easily pass to the limit in all the energy-type estimates.
We will use the following lower semicontinuity result w.r.t.\ distributional convergence of measures for integral functionals with lower semicontinuous and convex integrands. 

A sequence of measures $(\nu_n)_{n\ge 1}\in \calM_{loc}(\Omega)$ on Borel set $\Omega\subset\R^d$ is said to converge distributionally to $\nu\in\calM_{loc}(\Omega)$ if
\[
\int_\Omega \psi(x)\,\nu_n(\dx) \longrightarrow \int_\Omega \psi(x)\, \nu(\dx)\qquad \forall\,\psi\in C_c^\infty(\Omega).
\]
In this case, we simply write $\nu_n\rightharpoonup^* \nu$ in $\calM_{loc}(\Omega)$.

\medskip

The following is a corollary of a classical result in the Calculus of Variations (see eg.\ \cite{AFP00}).

\begin{proposition}\label{prop:lsc}
Let $g:\R\to [0,+\infty]$ be a superlinear, convex, and lower semicontinuous function. Then, the functional
\[
\mathcal{M}^+(\R)\times \mathcal{M}(\R)\ni (\nu,\eta)\mapsto\mathcal{G}(\nu,\eta) \coloneq \begin{cases}\displaystyle
	\int_\R g\biggl(\frac{\d \eta}{\d \nu}\biggr) \d\nu &\displaystyle\text{if $\eta\ll \nu$},\\
	+\infty &\text{otherwise,}
\end{cases}
\]
is lower semicontinuous w.r.t.\ the distributional convergence of measures.
\end{proposition}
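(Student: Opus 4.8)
The plan is to reduce the statement to the classical lower semicontinuity theorem for integral functionals of measures found in \cite{AFP00} (e.g.\ Reshetnyak-type / Ioffe-type theorems), via the recession function of the integrand. First I would introduce the perspective (or \emph{recession}) function associated with $g$,
\[
f(s,w) \coloneq \begin{cases} s\, g(w/s) & s>0,\\ g^\infty(w) & s=0,\end{cases}
\]
where $g^\infty(w)=\lim_{t\to\infty} g(tw)/t$ is the recession function of $g$; since $g$ is superlinear, $g^\infty(w)=+\infty$ for $w\ne 0$ and $g^\infty(0)=0$. The key point is that $f:[0,\infty)\times\R\to[0,+\infty]$ is convex, lower semicontinuous, and positively $1$-homogeneous, and that for a pair $(\nu,\eta)$ with $\eta\ll\nu$ one has the identity
\[
\mathcal{G}(\nu,\eta) = \int_\R f\!\left(\frac{\d\nu}{\d\lambda},\frac{\d\eta}{\d\lambda}\right)\d\lambda
\]
for \emph{any} nonnegative measure $\lambda$ with $\nu,\eta\ll\lambda$ (the right-hand side is independent of the choice of $\lambda$ by $1$-homogeneity of $f$); moreover, when $\eta$ is \emph{not} absolutely continuous with respect to $\nu$, the singular part of $\eta$ contributes $g^\infty$ of a nonzero vector, i.e.\ $+\infty$, so the formula above computes $\mathcal{G}(\nu,\eta)$ in all cases. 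This rewrites $\mathcal{G}$ as a standard integral functional of the vector measure $(\nu,\eta)$.

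Next I would invoke the classical result: if $f$ is nonnegative, convex, lower semicontinuous and positively $1$-homogeneous, then the functional $(\nu,\eta)\mapsto \int f(\tfrac{\d(\nu,\eta)}{\d\lambda})\,\d\lambda$ is sequentially lower semicontinuous with respect to weak-$*$ convergence of the $\R^2$-valued measures $(\nu_n,\eta_n)$, provided the total variations are locally bounded. To match the hypothesis of the proposition, I would first reduce to the case of locally bounded total variation: if $\liminf_n \mathcal{G}(\nu_n,\eta_n)=+\infty$ there is nothing to prove, so we may pass to a subsequence along which $\mathcal{G}(\nu_n,\eta_n)$ is bounded; by superlinearity of $g$ (de la Vallée–Poussin), this forces $(|\eta_n|)_n$ to be locally uniformly integrable with respect to $(\nu_n)_n$, and since $\nu_n\rightharpoonup^*\nu$ the masses $\nu_n(K)$ are bounded on compacts; hence $|\eta_n|(K)$ is bounded on every compact $K$. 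After a further subsequence, $\eta_n\rightharpoonup^*\tilde\eta$ in $\mathcal{M}_{loc}(\R)$ for some $\tilde\eta$, and testing against $C_c^\infty$ functions shows $\tilde\eta=\eta$ (distributional limits are unique). Applying the classical lower semicontinuity theorem to the pair $(\nu_n,\eta_n)\rightharpoonup^*(\nu,\eta)$ gives
\[
\mathcal{G}(\nu,\eta) \le \liminf_{n\to\infty}\mathcal{G}(\nu_n,\eta_n),
\]
first along the subsequence and then, since the value was a $\liminf$, along the original sequence.

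The main obstacle is purely bookkeeping rather than conceptual: one must be careful that the local boundedness of total variations — which is \emph{not} assumed a priori, only the distributional convergence $\nu_n\rightharpoonup^*\nu$ and (implicitly) some control on $\eta_n$ — is genuinely available. This is exactly where superlinearity of $g$ is used: a uniform bound on $\mathcal{G}(\nu_n,\eta_n)$ together with the de la Vallée–Poussin criterion upgrades ``bounded energy'' to ``locally equi-integrable densities,'' which in turn yields locally bounded variation of $\eta_n$; absent such a bound the inequality is trivial. A secondary point to handle cleanly is the identification of the limit measure $\eta$ and the verification that the recession-function formula correctly accounts for the singular part of $\eta$ relative to $\nu$ (so that ``$\eta\not\ll\nu \Rightarrow \mathcal{G}=+\infty$'' is consistent with the integral representation); both are standard once the $1$-homogeneous reformulation is in place.
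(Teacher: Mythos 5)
Your proposal is correct and follows essentially the same route as the paper, which offers no proof of Proposition~\ref{prop:lsc} beyond citing it as a corollary of the classical lower semicontinuity results in \cite{AFP00}. Your write-up simply fills in the standard reduction — perspective/recession reformulation into a $1$-homogeneous integrand, Reshetnyak–Ioffe type lower semicontinuity, and superlinearity (de la Vall\'ee--Poussin) plus nonnegativity of $\nu_n$ to upgrade distributional convergence to local weak-$*$ convergence with locally bounded variations — and these steps are sound.
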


In the following, we define the measures
\begin{gather*}
\sigma_t^\ep(\dx)\coloneq\tilde \rho^\ep(t,x)\d x,\qquad
\Sigma^\ep(\dt\,\dx)\coloneq \sigma_t^\ep(\dx)\d t,\qquad \omega_t^\ep(\dx)\coloneq \tilde\rho^\ep(t,x)u^\ep(t,x)\d x\\
j_t^\ep(\dx) \coloneq \partial_x\delta\calF_\ep(\rho^\ep(t,x))\, \sigma_t^\ep(\dx),\qquad 
J^\ep(\dt\dx)\coloneq j_t^\ep(\dx)\d t,\\
\vartheta_1^\ep(\dt\dx)\coloneq \mu(\tilde\rho^\ep(t,x))\d t\d x,\qquad \Upsilon_1^\ep(\dt\,\dx)\coloneq \partial_x u^\ep(t,x)\,\vartheta_1^\ep(\dt\,\dx),\\
\vartheta_2^\ep(\dt\dx)\coloneq \ep\alpha(\tilde\rho^\ep(t,x))^\alpha\d t\,\d x,\qquad \Upsilon_2^\ep(\dt\dx)\coloneq \partial_x u^\ep(t,x)\,\vartheta_2^\ep(\dt\,\dx),\\
\Pi^\ep(\dt\dx\dy) \coloneq \phi_\ep(x-y)\tilde\rho^\ep(t,x)\,\tilde\rho^\ep(t,y)\d t\,\d x\d y,\\
\varGamma_\ep(\dt\,\dx\,\dy)\coloneq \bigl[ u^\ep(t,x) - u^\ep(t,y)\bigr]\,\Pi^\ep(\dt\,\dx\,\dy).
\end{gather*}
Using the measures defined above, we then have that
\begin{align*}
    \calJ_{\tau,\lambda,\ep}(\rho^\ep,u^\ep) &= \frac{1}{2}\int_\R \biggl|\frac{\d(\omega^\ep + j^\ep)}{\d \sigma^\ep}\biggr|^2\!\!\d \sigma^\ep + \frac{\lambda}{2}\int_\R \biggl|\frac{\d\omega^\ep }{\d \sigma^\ep}\biggr|^2\!\!\d \sigma^\ep + (1+\lambda+\tau)
\widehat\calF_\ep(\sigma^\ep), \\
\calD_{\tau,\lambda,\ep}(\rho^\ep,u^\ep) &\ge  \int_\R \left[\frac{1}{2}\biggl|\frac{\d J^\ep}{\d \Sigma^\ep}\biggr|^2 \!\!+  \ell_{\tau,\lambda}\biggl|\frac{\d \omega_t^\ep}{\d \sigma_t^\ep}\biggr|^2\right] \!\!\d \Sigma^\ep 
+ \lambda \int_\R \biggl|\frac{\d \Upsilon_1^\ep}{\d \vartheta_1^\ep}\biggr|^2 \!\!\d \vartheta_1^\ep + c_{\lambda} \iint_{\R \times \R} \biggl|\frac{\d \varGamma^\ep}{\d \Pi^\ep}\biggr|^2 \!\!\d \Pi^\ep,
\end{align*}
% \eqref{est:BD_app_uniform} as
% \equ{\label{emM-1}
% &\sup_{t\in[0,T]}\left[\frac{1}{2}\int_\R \biggl|\frac{\d(\omega_t^\ep + j_t^\ep)}{\d \sigma_t^\ep}\biggr|^2\d \sigma_t^\ep + \frac{\lambda}{2}\int_\R \biggl|\frac{\d\omega_t^\ep }{\d \sigma_t^\ep}\biggr|^2\d \sigma_t^\ep + (1+\lambda+\tau)
% \widehat\calF_\ep(\sigma_t^\ep)\right] \\
% & + (1-a_\ep)\lr{\ell_{\tau,\lambda}\int_0^T\!\!\!\int_\R \biggl|\frac{\d \omega_t^\ep}{\d \sigma_t^\ep}\biggr|^2 \!\!\d \sigma_t^\ep\!\d t + \frac{1}{2} \int_0^T\!\!\!\int_\R\biggl|\frac{\d J^\ep}{\d \Sigma^\ep}\biggr|^2 \!\!\d \Sigma^\ep
% + \lambda \int_0^T\!\!\!\int_\R \biggl|\frac{\d \Upsilon_1^\ep}{\d \vartheta_1^\ep}\biggr|^2 \!\!\d \vartheta_1^\ep 
% %	+ \gamma \int_{(0,T)\times\R} \biggl|\frac{\d \Upsilon_2^\ep}{\d \vartheta_2^\ep}\biggr|^2\d \vartheta_2^\ep 
% + c_{\lambda} \int_0^T\!\!\!\iint_{\R\times\R} \biggl|\frac{\d \varGamma^\ep}{\d \Pi^\ep}\biggr|^2 \!\!\d \Pi^\ep} \\
% &\le \calJ_{\tau,\lambda,\ep}(\rho^\ep_0,u^\ep_0),
% }
where we simply dropped the $(\vartheta_2^\ep,\Upsilon_2^\ep)$ term, and we set
\[
\calM^+(\R)\ni \sigma\mapsto \widehat\calF_\ep(\sigma) \coloneq \begin{cases}\displaystyle
\int_\R U_\ep(\rho(x))\d x + \frac{1}{2}\int_\R W\ast \sigma\,\d \sigma &\displaystyle\text{if $\sigma\ll \calL$ with $\rho=\frac{\d\sigma}{\d\calL}$}, \\
+\infty &\text{otherwise},
\end{cases}
\]
with $U_\ep(\rho) = \rho^\gamma/(\gamma-1) + \ep^\beta \rho^\alpha/(\alpha-1)$. 

\medskip

Our first result pertains to the liminf inequality of the free energy term $\widehat\calF_\ep$.

\begin{lemma}\label{lem:free-energy-lsc}
Suppose $\ep^\beta R_\ep^{1-\alpha}\to 0$. Then,
\[
\widehat \calF(\sigma) \le \liminf_{\ep\to 0} \widehat \calF(\sigma^\ep),
\]
where
\[
\calM^+(\R)\ni \sigma\mapsto \widehat\calF(\sigma) \coloneq \begin{cases}\displaystyle
	\int_\R U(\rho(x))\d x + \frac{1}{2}\int_\R W\ast \sigma\,\d \sigma &\displaystyle\text{if $\sigma\ll \calL$ with $\rho=\frac{\d\sigma}{\d\calL}$}, \\
	+\infty &\text{otherwise}.
\end{cases}
\]
\end{lemma}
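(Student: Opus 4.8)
The plan is to split the free energy into its internal-energy part $\int_\R U_\ep(\rho)\,\dx$ and the interaction part $\tfrac12\int_\R W\ast\sigma\,\d\sigma$, and to establish the liminf inequality for each piece separately, using the weak-$*$ convergence $\sigma^\ep\rightharpoonup^*\sigma$ in $\calM_{loc}(\R)$ together with the uniform second-moment bound (which controls tightness) coming from \eqref{moments_2} and \eqref{est_mod_app}. First I would observe that $U_\ep(\rho)=\rho^\gamma/(\gamma-1)+\ep^\beta\rho^\alpha/(\alpha-1)$ splits as $U(\rho)$ plus a nonpositive remainder $\ep^\beta\rho^\alpha/(\alpha-1)$ (note $\alpha<1/2$, so $\alpha-1<0$), whose total integral over $\Omega_\ep$ is bounded in absolute value by $\tfrac{\ep^\beta}{1-\alpha}(2R_\ep)^{1-\alpha}\to0$ by hypothesis; hence it suffices to prove the liminf inequality with $U$ in place of $U_\ep$. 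Since $\rho\mapsto U(\rho)=\rho^\gamma/(\gamma-1)$ is superlinear, convex and lower semicontinuous (it is finite and continuous on $\R_+$, set $+\infty$ on negatives), Proposition~\ref{prop:lsc} applied with $\eta=\nu=\sigma$, i.e.\ with integrand $g(s)=U(s)$ evaluated at $\d\sigma/\d\calL$ — or more directly its standard one-measure version — gives
\[
\int_\R U(\rho)\,\dx \le \liminf_{\ep\to0}\int_\R U(\rho^\ep)\,\dx = \liminf_{\ep\to0}\int_{\Omega_\ep} U(\rho^\ep)\,\dx,
\]
where $\rho=\d\sigma/\d\calL$ and $\rho^\ep=\d\sigma^\ep/\d\calL=\tilde\rho^\ep$; here one uses that $\sigma^\ep\rightharpoonup^*\sigma$ distributionally, which is implied by the strong $L^1_{loc}$ convergence $\tilde\rho^\ep(t)\to\rho(t)$ already established in Section~\ref{sec:compactness}.

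For the interaction term, I would use the decomposition $W(x)=-|x|+\tw(x)$ from Assumption~$(\calA_W)$ and write
\[
\tfrac12\int_\R W\ast\sigma^\ep\,\d\sigma^\ep = -\tfrac12\iintO{|x-y|\,\tilde\rho^\ep(x)\tilde\rho^\ep(y)} + \tfrac12\iintO{\tw(x-y)\,\tilde\rho^\ep(x)\tilde\rho^\ep(y)}.
\]
The key point is that $\sigma^\ep\otimes\sigma^\ep\rightharpoonup^*\sigma\otimes\sigma$ narrowly on $\R^2$ — which follows from the pointwise-a.e.\ plus $L^1$ convergence of $\tilde\rho^\ep(t)$ together with the uniform bound on $\iintO{(1+|x|^2+|y|^2)\,\tilde\rho^\ep(x)\tilde\rho^\ep(y)}$, giving tightness of the product measures. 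The map $(x,y)\mapsto -|x-y|$ is upper semicontinuous and bounded above by $0$, with $(x,y)\mapsto |x-y|\le 2+|x|^2+|y|^2$ uniformly integrable against the product measures by the second-moment bound; hence by the portmanteau theorem (liminf/limsup for continuous-up-to-moments integrands) the Coulomb part passes to the limit as an equality, $-\tfrac12\iintO{|x-y|\,\tilde\rho^\ep\tilde\rho^\ep}\to -\tfrac12\iintO{|x-y|\,\rho\rho}$. For the $\tw$-part, I would decompose $\tw = \tw^+ - \tw^-$ using the sign of its second derivative is not directly available, so instead split via continuity: $\tw$ is continuous on $\R$ with at most quadratic growth (from $|\pa_x\tw|\le l_W(1+|x|)$), so $(x,y)\mapsto\tw(x-y)$ is continuous and dominated by $c(1+|x|^2+|y|^2)$, again uniformly integrable; thus $\tfrac12\iintO{\tw(x-y)\tilde\rho^\ep\tilde\rho^\ep}\to\tfrac12\iintO{\tw(x-y)\rho\rho}$ by the same moment-controlled convergence. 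Combining, the interaction term converges (as a full limit, not merely liminf), and adding this to the liminf inequality for the internal energy yields $\widehat\calF(\sigma)\le\liminf_\ep\widehat\calF(\sigma^\ep)$ (and in fact $\widehat\calF_\ep(\sigma^\ep)$, since the difference $\widehat\calF_\ep(\sigma^\ep)-\widehat\calF(\sigma^\ep)$ is the $o(1)$ term from the $\ep^\beta\rho^\alpha$ correction).

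The main obstacle I anticipate is justifying the passage to the limit in the Coulomb (attractive, hence negative and unbounded below-in-modulus) part of the interaction energy: because $-|x-y|$ is unbounded, distributional/narrow convergence alone is not enough, and one genuinely needs the uniform second-moment estimate to upgrade to convergence of $\iintO{|x-y|\,\tilde\rho^\ep\tilde\rho^\ep}$ — this is where the hypotheses $|x|^{2}\rho_0\in L^1$ and the moment propagation of Lemma~\ref{moments} (together with the uniformity coming from the $\ep$-independent energy bounds) are essential. A secondary technical point is making sure the extension-by-zero $\tilde\rho^\ep$ does not create spurious boundary contributions: since $\tilde\rho^\ep$ is supported in $\overline\Omega_\ep$ and all integrals are over $\R$ (or $\R^2$), this is automatic, and the only $\ep$-dependence left is the harmless $\ep^\beta R_\ep^{1-\alpha}\to0$ term, which is precisely the stated hypothesis.
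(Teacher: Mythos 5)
Your proof is essentially the same as the paper's: lower semicontinuity for the convex, superlinear internal energy via Proposition~\ref{prop:lsc}, the hypothesis $\ep^\beta R_\ep^{1-\alpha}\to 0$ to kill the $\ep^\beta\rho^\alpha$ correction, and moment bounds plus at-most-quadratic growth of $W$ to pass to the limit in the interaction term. The one difference is cosmetic: you argue narrow convergence of the product measures $\sigma^\ep\otimes\sigma^\ep$ on $\R^2$ with uniform integrability, whereas the paper invokes the already-established $L^1_2$-convergence $\|\tilde\rho^\ep-\rho\|_{L^1_2}\to 0$ from Section~\ref{ssec:conv_interW} to conclude $\tilde\rho^\ep\, W\ast\tilde\rho^\ep\to\rho\, W\ast\rho$ strongly in $L^1$; also, the split $W=-|x|+\tw$ is unnecessary, since both pieces are handled identically. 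One small imprecision: to get uniform integrability of the quadratic-growth integrand $\tw(x-y)$ against the product measures you need the uniform $(2+\kappa)$-moment bound (not merely the second moment) — you do gesture at this in the last paragraph, but the body of the argument cites only second moments.
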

\begin{proof}
Since $[0,+\infty)\ni \rho\mapsto U(\rho)=\rho^\gamma/(\gamma-1)$ is superlinear, convex, and lower semicontinuous, the functional
\[
\calM^+(\R)\ni \sigma\mapsto \begin{cases}\displaystyle
	\int_\R U(\rho(x))\d x &\displaystyle\text{if $\sigma\ll \calL$ with $\rho=\frac{\d\sigma}{\d\calL}$}, \\
	+\infty &\text{otherwise}.
\end{cases}
\]
is lower semicontinuous w.r.t.\ the distributional convergence of measures due to Proposition~\ref{prop:lsc}, with $g=U$. On the other hand, we have that
\[
\ep^\beta \intM{(\rho^\ep)^\alpha} \le \ep^\beta\,(2R_\ep)^{1-\alpha}\longrightarrow 0\qquad\text{since\; $\ep^\beta R_\ep^{1-\alpha}\to 0$.}
\]
Due to the convergence estimate in Section \ref{ssec:conv_interW}, for the interaction energy, we have
\[
\lim_{\e \to 0}\int_\R W\ast \sigma^\ep\d \sigma^\ep = \int_\R W\ast\sigma\d\sigma.
\]
Altogether, we obtain the asserted liminf inequality.
\end{proof}

\begin{lemma}\label{lem:measure-conv}
Under the convergences given in \eqref{eq:convergences-lsc}, we have for some (not relabelled) subsequence:
\begin{gather*}
	j_t^{\ep} \rightharpoonup^* j_t(\dx)\coloneq D \rho^{\gamma}(\dx) + \rho\,\partial_x W\ast \rho\,\dx\quad\text{in $\calM_{loc}(\R)$ for every $t\ge 0$},\\
	J^{\ep}\rightharpoonup^* J(\dt\,\dx)\coloneq j_t(\dx)\d t\quad\text{in $\calM_{loc}((0,T)\times \R)$},\\
	\Upsilon_1^{\ep}\rightharpoonup^* \Upsilon,\quad \Upsilon_2^{\ep}\rightharpoonup^* 0\quad \text{in $\calM_{loc}((0,T)\times \R)$},\\
	\Pi^{\ep}\rightharpoonup \Pi(\dt\,\dx\,\dy)\coloneq \phi(x-y)\rho(t,x)\,\rho(t,y)\d t\d x\d y\quad\text{in $\calM((0,T)\times \R\times \R)$},\\
	\varGamma^{\ep}\rightharpoonup \varGamma(\dt\,\dx\,\dy)\coloneq \bigl[ u(t,x) - u(t,y)\bigr]\,\Pi(\dt\,\dx\,\dy)\quad\text{in $\calM((0,T)\times \R\times\R)$},
\end{gather*}
where, for all $\psi\in C_c^\infty((0,T)\times\R)$, $\Upsilon$ can be expressed as
\[\int_0^T\!\!\!\int_\R  \psi \d \Upsilon(\dx\,\dt) = -\gamma\int_0^T\!\!\!\int_\R  \partial_x\psi \,\rho^{\gamma}\,u\,\dx\,\dt - \frac{\gamma}{\gamma-\sfrac12}\int_0^T\!\!\!\int_\R \psi \,{\sqrt{\rho} u}\,\partial_x \rho^{\gamma-\sfrac{1}{2}}\,\dx\,\dt .
\]
\end{lemma}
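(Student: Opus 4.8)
# Proof Proposal for Lemma~\ref{lem:measure-conv}

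\textbf{Overall approach.} The plan is to prove each convergence separately, leveraging the convergences already collected in \eqref{eq:convergences-lsc} together with the uniform bounds from Section~\ref{Sec:existence} (the energy estimate \eqref{est_mod_app}, Corollary~\ref{cor_bddr}, and the Mellet--Vasseur estimate from Lemma~\ref{lem:MV}). The key conceptual point is that each measure is of the form ``density times a converging quantity'', so distributional convergence against a fixed test function $\psi \in C_c^\infty$ reduces to combining a strong convergence (of $\tilde\rho^\ep$, or of $\rho^{\gamma-\sfrac12}m^\ep$) with a weak convergence (of $m^\ep$, or of $\partial_x u^\ep$ in the appropriate weighted space). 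Since test functions have compact support, the issue of the domain mismatch between $\Omega_\ep$ and $\R$ disappears for $\ep$ small enough.

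\textbf{Step 1: Convergence of $j^\ep_t$ and $J^\ep$.} Recall $j_t^\ep(\dx) = \partial_x\delta\calF_\ep(\rho^\ep)\,\sigma_t^\ep(\dx) = \bigl(\partial_x\varphi_\ep(\rho^\ep) + \partial_x W\ast^\ep\rho^\ep\bigr)\tilde\rho^\ep\,\dx$. I would split $\rho^\ep\partial_x\varphi_\ep(\rho^\ep)$ into its leading piece and the $\ep^\beta$-correction. For the leading piece, note $\rho^\ep\partial_x\varphi(\rho^\ep) = \partial_x\bigl((\rho^\ep)^\gamma\bigr)$ as distributions on $\Omega_\ep$; testing against $\psi\in C_c^\infty(\R)$ and integrating by parts moves the derivative onto $\psi$, giving $-\int (\rho^\ep)^\gamma\,\partial_x\psi\,\dx \to -\int \rho^\gamma\,\partial_x\psi\,\dx = \int \psi\, D\rho^\gamma$, using the strong $L^1$-convergence of $(\tilde\rho^\ep)^\gamma$ from \eqref{eq:convergences-lsc}. (No boundary terms appear since $\supp\psi\subset\Omega_\ep$ eventually.) The $\ep^\beta$-correction is $\partial_x\bigl(\ep^\beta(\rho^\ep)^\alpha\bigr)$, and after integration by parts this is bounded by $\ep^\beta\|\partial_x\psi\|_{\sup}(2R_\ep)^{1-\alpha}\|\rho^\ep\|_{L^\infty}^\alpha$ — wait, more carefully, $\ep^\beta(\rho^\ep)^\alpha \le \ep^\beta c_\infty^\alpha$ pointwise, so $\ep^\beta\int_{\Omega_\ep}(\rho^\ep)^\alpha|\partial_x\psi| \le \ep^\beta c_\infty^\alpha|\supp\psi| \to 0$. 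Finally the interaction term $\tilde\rho^\ep\,\partial_x W\ast^\ep\tilde\rho^\ep \rightharpoonup \rho\,\partial_x W\ast\rho$ weakly in $L^1(0,T;L^1_{loc})$ is exactly the convergence established in the relevant subsection of Section~\ref{sec:compactness}; testing against $\psi$ in space alone works for a.e.\ $t$ by a standard subsequence-and-dominated-convergence argument, and in spacetime for $J^\ep$ directly. Combining yields $j^\ep_t \rightharpoonup^* j_t$ for a.e.\ $t$ and $J^\ep\rightharpoonup^* J$.

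\textbf{Step 2: Convergence of $\Upsilon_1^\ep$, $\Upsilon_2^\ep$, $\Pi^\ep$, $\varGamma^\ep$.} For $\Upsilon_1^\ep(\dt\dx) = \mu(\tilde\rho^\ep)\partial_x u^\ep\,\dt\dx = \partial_x\bigl(\mu(\tilde\rho^\ep)u^\ep\bigr) - u^\ep\partial_x\mu(\tilde\rho^\ep)$ — more usefully, I would not try to identify $\Upsilon$ as a density but rather pass to the limit directly: test against $\psi\in C_c^\infty((0,T)\times\R)$, write $\int\psi\,\mu(\rho^\ep)\partial_x u^\ep = -\int\partial_x\psi\,\mu(\rho^\ep)u^\ep - \int\psi\,\partial_x\mu(\rho^\ep)\,u^\ep$. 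Now $\mu(\rho^\ep)u^\ep = \gamma(\rho^\ep)^{\gamma-\sfrac12}\cdot\sqrt{\rho^\ep}u^\ep$; the first factor converges strongly (powers of $\rho^\ep$), the second $\sqrt{\rho^\ep}u^\ep\to\sqrt\rho u$ strongly in $L^2$ by \eqref{eq:kin-dens-conv}, actually one writes $\mu(\rho^\ep)u^\ep = \rho^{\gamma-\sfrac12}m^\ep\cdot\gamma/\rho^\ep$... cleaner: $\mu(\rho^\ep)u^\ep = \gamma(\rho^\ep)^{\gamma-\sfrac12}m^\ep/\sqrt{\rho^\ep}$, and $(\rho^\ep)^{\gamma-\sfrac12}m^\ep \to \rho^{\gamma-\sfrac12}m$ strongly in $L^2(0,T;L^1)$ from \eqref{eq:convergences-lsc}, giving $\to\gamma\rho^\gamma u$ in $L^1_{loc}$. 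For the second term, $\partial_x\mu(\rho^\ep) = \gamma\partial_x(\rho^\ep)^\gamma = \frac{\gamma^2}{\gamma-\sfrac12}\sqrt{\rho^\ep}\partial_x(\rho^\ep)^{\gamma-\sfrac12}$, so $\partial_x\mu(\rho^\ep)u^\ep = \frac{\gamma^2}{\gamma-\sfrac12}(\sqrt{\rho^\ep}u^\ep)\partial_x(\rho^\ep)^{\gamma-\sfrac12}$ — a product of an $L^2$-strongly-convergent sequence and an $L^\infty(0,T;L^2)$-weakly-convergent one, hence converges to $\frac{\gamma^2}{\gamma-\sfrac12}\sqrt\rho u\,\partial_x\rho^{\gamma-\sfrac12} = \frac{\gamma}{\gamma-\sfrac12}\sqrt\rho u\,\partial_x\rho^{\gamma-\sfrac12}\cdot\gamma$... matching the stated formula for $\Upsilon$ after bookkeeping of constants. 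For $\Upsilon_2^\ep$: $\ep\alpha(\rho^\ep)^\alpha\partial_x u^\ep$ is bounded in $L^2(0,T;L^1)$ by $\sqrt{\|\ep\alpha(\rho^\ep)^\alpha\|_{L^1}}\cdot\|\partial_x u^\ep\|_{L^2(\mu_\ep(\rho^\ep))}$ with the first factor $\to 0$ — wait, $\|\ep\alpha(\rho^\ep)^\alpha\|_{L^1}\le\ep\alpha c_\infty^\alpha(2R_\ep)$; using instead $\beta$ and $\ep^\beta R_\ep^{1-\alpha}\to 0$ one gets the correct decay — hence $\Upsilon_2^\ep\rightharpoonup^* 0$. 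For $\Pi^\ep$ and $\varGamma^\ep$, I would test against $\psi\in C_c^\infty$ in the three variables; $\Pi^\ep$ convergence follows from $\tilde\rho^\ep\otimes\tilde\rho^\ep\to\rho\otimes\rho$ strongly in $L^1_{loc}(\R^2)$ (product of strong limits) together with $\phi_\ep\to\phi$ in $L^{\gamma'}_{loc}$ and a Hölder argument on the compact support; $\varGamma^\ep$ additionally uses $u^\ep(x)-u^\ep(y)$, which one rewrites via $\sqrt{\phi_\ep}(u^\ep(x)-u^\ep(y))\sqrt{\tilde\rho^\ep(x)\tilde\rho^\ep(y)}$ bounded in $L^2(\dt\dx\dy)$ by $\calK_\phi$, combined with the $\Pi^\ep$-convergence and the a.e.\ convergence of $u^\ep$ off the vacuum set.

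\textbf{Main obstacle.} The genuinely delicate point is the identification of the limit of $\Upsilon_1^\ep$, i.e.\ justifying the displayed formula for $\Upsilon$. The subtlety is that $\partial_x u^\ep$ is only controlled in the degenerate-weighted space $L^2(\mu_\ep(\rho^\ep))$ and has no limit of its own; one must avoid ever passing to a limit in $\partial_x u^\ep$ alone, instead always pairing it with a power of $\rho^\ep$ that degenerates on vacuum, and then using the two different strong-convergence facts ($\sqrt{\rho^\ep}u^\ep\to\sqrt\rho u$ in $L^2$, and $(\rho^\ep)^{\gamma-\sfrac12}m^\ep\to\rho^{\gamma-\sfrac12}m$ in $L^2(0,T;L^1)$) to handle, respectively, the ``$u^\ep$ times $\partial_x(\text{density})$'' term and the ``$\mu(\rho^\ep)u^\ep$'' term after integration by parts. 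Keeping track of the constants $\gamma$ vs.\ $\gamma-\sfrac12$ in these integrations by parts, and ensuring that the boundary contributions at $\pm R_\ep$ vanish (which they do because the test functions are compactly supported and $R_\ep\to\infty$), is the bulk of the careful work; everything else is a routine strong-times-weak passage to the limit.
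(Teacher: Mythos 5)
Your proposal follows the paper's approach for the first four families of measures: the integration-by-parts decomposition of $j^\ep_t$ (moving $\partial_x$ onto $\psi$ to exploit the strong $L^1$-convergence of $(\tilde\rho^\ep)^\gamma$, plus the separate treatment of the $\ep^\beta$-correction and the nonlocal term), and the decomposition $\int\psi\,\mu(\rho^\ep)\partial_x u^\ep=-\int\partial_x\psi\,\mu(\rho^\ep)u^\ep-\int\psi\,u^\ep\partial_x\mu(\rho^\ep)$ for $\Upsilon_1^\ep$, with the first term handled by the strong convergence of $(\tilde\rho^\ep)^{\gamma-\sfrac12}m^\ep$ and the second by the strong-times-weak pairing of $\sqrt{\rho^\ep}u^\ep$ with $\partial_x(\rho^\ep)^{\gamma-\sfrac12}$. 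These match the paper (the paper additionally extracts the weak $L^2$-limit $\zeta$ of $\mathbbm{1}_{\Omega_\ep}\partial_x(\rho^\ep)^{\gamma-\sfrac12}$ before pairing, which is tidier but equivalent). One small slip: for $\Upsilon_2^\ep$ you bound $\|\ep^\beta\alpha(\rho^\ep)^\alpha\|_{L^1}$ over the whole domain $\Omega_\ep$ and then worry about the decay of $\ep^\beta R_\ep$. Since the claimed convergence is in $\calM_{loc}$, one only needs $|\vartheta_2^\ep|(A)\le\ep^\beta\alpha\,\|\tilde\rho^\ep\|_{L^\infty}^\alpha|A|$ on compact $A\Subset(0,T)\times\R$, which tends to zero from $\ep^\beta\to 0$ alone, with no condition relating $\ep$ and $R_\ep$; this is how the paper argues.

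The genuinely different (and incomplete) part is the identification of the limit of $\varGamma^\ep$. You factor the density as $\sqrt{\phi_\ep}(u^\ep(x)-u^\ep(y))\sqrt{\rho^\ep(x)\rho^\ep(y)}$ times $\sqrt{\phi_\ep}\sqrt{\rho^\ep(x)\rho^\ep(y)}$ and then appeal to the $L^2$-bound from $\calK_\phi$, the convergence $\Pi^\ep\rightharpoonup\Pi$, and a.e.\ convergence of $u^\ep$ off the vacuum set. As stated, this does not identify the limit: both factors are only $L^2$-bounded, so you do not have an immediate weak-times-strong pairing, and a.e.\ convergence of $u^\ep(x)-u^\ep(y)$ relative to the \emph{changing} reference measures $\Pi^\ep$ does not straightforwardly pass to the limit. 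One can repair the argument by proving that the second factor $\sqrt{\phi_\ep\,\rho^\ep\otimes\rho^\ep}$ converges strongly in $L^2_{loc}$ (via $\phi_\ep\,\rho^\ep\otimes\rho^\ep\to\phi\,\rho\otimes\rho$ in $L^1_{loc}$ and $(\sqrt a-\sqrt b)^2\le|a-b|$), then extracting a weak $L^2_{loc}$-limit of the first factor and identifying it by an a.e.\ argument — but this needs to be spelled out, in particular the uniform integrability near vacuum. The paper avoids all of this by a cleaner algebraic trick: testing $\varGamma^\ep$ against tensor products $\psi_1(t,x)\psi_2(t,y)$ and expanding the commutator to obtain
\[
\int_0^T\!\!\!\iint \psi\,\d\varGamma^\ep
= \int_0^T\!\!\!\int_\R \psi_1\,\tilde\rho^\ep u^\ep\,\phi_\ep\ast^\ep(\psi_2\tilde\rho^\ep)\,\dx\dt
- \int_0^T\!\!\!\int_\R \psi_2\,\tilde\rho^\ep u^\ep\,\phi_\ep\ast^\ep(\psi_1\tilde\rho^\ep)\,\dx\dt,
\]
which reduces the limit passage to a pairing of the weak $L^1$-convergence of $m^\ep=\tilde\rho^\ep u^\ep$ against the convergence of $\phi\ast(\psi_i\tilde\rho^\ep)$, followed by a density argument in the class of test functions. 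This sidesteps any discussion of vacuum and of the changing reference measures, and is the route you should take.
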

\begin{proof}
\begin{enumerate}
	\item
	For any $\psi\in C_c^\infty(\R)$ and almost every $t\ge 0$, we have for $\ep\ll 1$ sufficiently small
	\begin{align*}
		\int_\R \psi\d j_t^\ep &= \int_{\Omega_\ep} \psi\,\rho^\ep\, \partial_x \varphi_\ep(\rho^\ep) \d x + \int_\R \psi \,\tilde\rho^\ep\,\partial_x W\ast \tilde\rho^\ep \d x \\
		&= -\int_\R \partial_x\psi \bigl[ (\tilde\rho^\ep)^\gamma + \ep(\tilde\rho^\ep)^\alpha\bigr]\d x + \int_\R \psi \,\tilde\rho^\ep\,\partial_x W\ast \tilde\rho^\ep \d x \\
		&\qquad\longrightarrow -\int_\R \partial_x\psi\, \rho^\gamma\d x + \int_\R \psi\, \rho\,\partial_x W\ast\rho\d x.
	\end{align*}
	Since $\rho^\gamma(t)\in BV(\R)$ for every $t\ge 0$, this implies that $j_t\in \calM(\R)$, and consequently, $j_t^\ep \rightharpoonup^* j_t$ in $\calM_{loc}(\R)$ for every $t\ge 0$.
	
	\item This follows easily from (1).
	\item We first establish that $(\Upsilon_1^\ep)_{\ep>0}$ admits a subsequence that converges in $\calM_{loc}((0,T)\times\R)$. This follows from the fact that
	\[
	\sup_{\ep>0}\|\Upsilon_1^\ep\|_{TV} \le \sup_{\ep>0}\|\tilde\rho^\ep\|_{L^\gamma}^\gamma\biggl\|\frac{\d \Upsilon_1^\ep}{\d \vartheta_1^\ep}\biggr\|_{L^2(\vartheta_1^\ep)} <+\infty,
	\]
	which provides an $\Upsilon\in\calM((0,T)\times\R)$ and a (not relabelled) subsequence such that $\Upsilon_1^\ep \rightharpoonup^* \Upsilon$ in $\calM_{loc}((0,T)\times\R)$. On the other hand, we have for any $\psi\in C_c^\infty((0,T)\times\R)$,
	\begin{align*}
		\int_0^T\!\!\!\int_\R \psi\,\Upsilon_1^\ep(\dt\,\dx) &= \int_0^T\!\!\!\int_\R \psi \,\mu(\tilde\rho^\ep)\,\partial_x u^\ep\d x\d t \\
		&= - \gamma\int_0^T\!\!\!\int_\R \partial_x\psi \,(\tilde\rho^\ep)^{\gamma}\, u^\ep\d x\d t 
		- \frac{\gamma}{\gamma-\sfrac12}\int_0^T\!\!\!\int_\R  \psi\,\sqrt{\tilde\rho^\ep}u^\ep\,\partial_x(\rho^\ep)^{\gamma-\sfrac12}\d x \d t.
	\end{align*}
	The first term converges by means of the dominated convergence theorem. As for the second term, we first note that since
	\[
	\sup_{\ep>0}\int_0^T\!\!\!\int_\R \mathbbm{1}_{\Omega_\ep}|\partial_x(\rho^\ep)^{\gamma-\sfrac12}|^2\,\dx\,\dt <+\infty,
	\]
	there exists some $\zeta\in L^2((0,T)\times\R)$ and some (not relabelled) subsequence, for which
	\[
	\mathbbm{1}_{\Omega_\ep}\partial_x(\rho^\ep)^{\gamma-\sfrac12} \rightharpoonup \zeta\quad\text{weakly in $L^2((0,T)\times\R)$.}
	\]
	On the other hand, we have for any $\psi\in C_c^\infty((0,T)\times\R)$ and $\ep\ll 1$ sufficiently small,
	\begin{align*}
		\int_0^T\!\!\!\int_{\Omega_\ep} \psi\,\partial_x(\rho^\ep)^{\gamma-\sfrac12}\,\dx\,\dt = -\int_0^T\!\!\!\int_\R\partial_x\psi \,(\rho^\ep)^{\gamma-\sfrac12}\,\dx\,\dt \longrightarrow -\int_0^T\!\!\!\int_\R\partial_x\psi\,\rho^{\gamma-\sfrac12}\,\dx\,\dt,
	\end{align*}
	i.e.\ we can identify $\zeta = \partial_x \rho^{\gamma-\sfrac12}$. Consequently, second term converges as a product of a strongly converging sequence and a weakly converging sequence. Together, this implies that for any $\psi\in C_c^\infty((0,T)\times\R)$,
	\begin{align*}
		\int_0^T\!\!\!\int_\R \psi\,\Upsilon(\dt\,\dx) &= -\int_0^T\!\!\!\int_\R \partial_x\psi \,\mu(\rho)\,u\d x\d t - \frac{\gamma}{\gamma-\sfrac12}\int_0^T\!\!\!\int_\R\psi \sqrt{\rho}u\,\partial_x \rho^{\gamma-\sfrac12}\d x\d t.
	\end{align*}
	
	As for $(\Upsilon_2^\ep)_{\ep>0}$, we first notice that for any compact set $A\Subset (0,T)\times\R$:
	\[
	|\vartheta_2^\ep|(A) = \ep^\beta \alpha\int_A (\tilde\rho^\ep)^\alpha\d t\d x \le \ep^\beta \alpha \|\tilde\rho^\ep\|_{L^\infty}^\alpha |A|\longrightarrow 0.
	\]
	Consequently,
	\[
	|\Upsilon_2^\ep|(A) \le |\vartheta_2^\ep|(A) \biggl\|\frac{\d \Upsilon_2^\ep}{\d \vartheta_2^\ep}\biggr\|_{L^2(\vartheta_2^\ep)} \longrightarrow 0\qquad\forall\,\text{compact $A\Subset (0,T)\times\R$},
	\]
	thus implying that $\Upsilon_2^\ep\rightharpoonup^* 0$ in $\calM_{loc}((0,T)\times\R)$ as required.
	\item Since $\phi\ast \tilde\rho^\ep\rightharpoonup^* \phi\ast\rho$ weakly-$*$ in $L^\infty((0,T)\times\R)$ and $\tilde\rho^\ep \to \rho$ strongly in $L_{loc}^1((0,T)\times\R)$, we easily deduce for every $\psi\in C_c((0,T)\times\R\times\R)$:
	\[
	\int_0^T\!\!\!\int_\R \psi\d \Pi^\ep = \int_0^T\!\!\!\int_\R \psi\,\tilde\rho^\ep\,\phi\ast\tilde\rho^\ep \d x\d t \longrightarrow \int_0^T\!\!\!\int_\R \psi\,\rho\,\phi\ast \rho \d x\d t = \int_0^T\!\!\!\int_\R \psi\d \Pi.
	\]
	\item For any $\psi(t,x,y) = \psi_1(t,x)\,\psi_2(t,y)$ with $\psi_1,\psi_2\in C_b^\infty((0,T)\times\R)$, we have that
	\begin{align*}
		\int_0^T\!\!\!\iint_{\R\times\R} \psi \d \varGamma^\ep &= \int_0^T\!\!\!\iint_{\R\times\R} \psi(t,x,y)\phi(x-y)\bigl[\tilde u^\ep(t,x)-\tilde u^\ep(t,y)\bigr]\,\tilde\rho^\ep(t,x)\,\tilde\rho^\ep(t,y)\d x\d y\d t \\
		&= \int_0^T\!\!\!\int_\R \psi_1\,\tilde\rho^\ep u^\ep\,\phi\ast(\psi_2\tilde\rho^\ep)\d x\d t - \int_0^T\!\!\!\int_\R \psi_2\,\tilde\rho^\ep u^\ep\,\phi\ast(\psi_1\tilde\rho^\ep)\d x\d t.
	\end{align*}
	Owing to the convergence $\tilde\rho^\ep u^\ep \rightharpoonup \rho u$ in $L^1((0,T)\times\R)$, we obtain as in (4),
	\begin{align*}
		\int_0^T\!\!\!\iint_{\R\times\R} \psi \d \varGamma^\ep \longrightarrow &\int_0^T\!\!\!\int_\R  \psi_1\,\rho u \,\phi\ast(\psi_2\rho)\d x\d t - \int_0^T\!\!\!\int_\R\psi_2\,\rho u\,\phi\ast(\psi_1\rho)\d x\d t \\
		&\qquad = \int_0^T\!\!\!\iint_{\R\times\R}  \psi(t,x,y)\, \bigl[u(t,x)-u(t,y)\bigr]\, \Pi(\dt\,\dx\,\dy) = \int_0^T\!\!\!\iint_{\R\times\R} \psi \d \varGamma.
	\end{align*}
	Since any function in $C_b((0,T)\times\R)$ can be approximated by functions of the form $\psi$ above, we conclude that $\varGamma^\ep\rightharpoonup\varGamma$ in $\calM((0,T)\times\R\times\R)$ as asserted. \qedhere
\end{enumerate}
\end{proof}

\begin{theorem}\label{thm:limit_energy}
The results of Lemma~\ref{lem:free-energy-lsc}, Proposition~\ref{prop:lsc} and Lemma~\ref{lem:measure-conv} yield
\equ{\label{emM-2}
    \calJ_{\tau,\lambda}(\rho(t),u(t)) + \int_0^t \calD_{\tau,\lambda}(\rho,u)\,\dt \le \calJ_{\tau,\lambda}(\rho_0,u_0).
}
In particular, the pair $(\rho,\sqrt{\rho}u)$ is a weak solution of \eqref{main_eq}.
\end{theorem}
\begin{proof}
We begin by recalling the integrated BD-type inequality \eqref{est:BD_app}
\begin{align*}
	&\calJ_{\tau,\lambda,\ep}(\rho^\ep(t),u^\ep(t)) +\left(\frac{1}{2}-a_\ep\right)\int_0^t\!\!\int_\R\biggl|\frac{\d J^\ep}{\d \Sigma^\ep}\biggr|^2  \!\!\d \Sigma^\ep + \lambda \int_0^t\!\!\int_\R \biggl|\frac{\d \Upsilon_1^\ep}{\d \vartheta_1^\ep}\biggr|^2 \!\!\d \vartheta_1^\ep + c_{\lambda} \int_0^t\!\!\iint_{\R \times \R} \biggl|\frac{\d \varGamma^\ep}{\d \Pi^\ep}\biggr|^2 \!\!\d \Pi^\ep\\
 &\hspace{10em}\le - \left(\ell_{\tau,\lambda}-a_\ep\right)\int_0^t\!\!\int_\R \tilde\rho^\ep|u^\ep|^2\, \dx\dt + \calJ_{\tau,\lambda,\ep}(\rho^\ep_0,u^\ep_0)
\end{align*}
for $\ep\ll1$ sufficiently small such that $a_\ep <1/2$. For the terms on the left-hand side, we simply apply Proposition~\ref{prop:lsc} under the convergences provided in Lemma~\ref{lem:measure-conv}. As for the right-hand side, we make use of the strong convergence provided by \eqref{eq:convergences-lsc} for the first term and the convergence of the initial data in Proposition~\ref{prop_convJ} for the second term. Together, we obtain \eqref{emM-2}.

Moreover, due to the lower semicontinuity result, we also find that
\begin{align*}
	\int_\R \biggl|\frac{\d D\rho^\gamma}{\d \sigma_t}\biggr|^2\d \sigma_t <+\infty\qquad\text{for every $t\ge 0$},
\end{align*}
thus implying that $D\rho^\gamma \ll \sigma_t$ for every $t\ge 0$. Since $\sigma_t = \rho_t\,\dx$,  $D\rho_t^\gamma$ is absolutely continuous w.r.t.\ to the Lebesgue measure with $\d D\rho_t^\gamma/\dx = \partial_x\rho_t^\gamma$, where $\partial_x\rho_t^\gamma$ is the weak derivative of $\rho_t^\gamma$.

The fact that $(\rho,m)$ satisfies \eqref{2.3} and \eqref{2.4} follow from the asserted convergences and that 
\begin{align*}
	&\int_0^T\!\!\!\int_\R \mu_\ep(\tilde\rho^\ep)\,\partial_x \psi\,\partial_x u^\ep\d x\d t = \int_0^T\!\!\!\int_\R  \partial_x \psi\,\Upsilon_1^\ep(\dt\,\dx) + \int_0^T\!\!\!\int_\R  \partial_x \psi\,\Upsilon_2^\ep(\dt\,\dx) \\
	&\hspace{4em}\longrightarrow \int_0^T\!\!\!\int_\R  \partial_x \psi\,\Upsilon(\dt\,\dx) = -\int_0^T\!\!\!\int_\R \partial_x\psi \,\mu(\rho)\,u\d x\d t - \frac{\gamma}{\gamma-\sfrac12}\int_0^T\!\!\!\int_\R \psi \sqrt{\rho}u\,\partial_x \rho^{\gamma-\sfrac12}\d x\d t,
\end{align*}
for all $\psi\in C_c^\infty((0,T)\times\R)$, therewith concluding the proof.
\end{proof}

\section{Long-time behavior of solutions}\label{Sec:lt}

This section is devoted to the proof of Theorem~\ref{Th:2}. We first obtain from \eqref{est:BD_app_uniform} that 
\begin{gather}
\int_{\Om_\e}{\rho^\e (\pa_x \delta \calF_\e(\rho^\e))^2}\d x, \quad \int_{\Om_\e}{ \mu_\e(\rho^\e)(\pa_x u^\e)^2} \d x \in L^1(0,\infty), \label{lt_est1}\\
\calK_{\phi^\ep}(\rho^\ep,u^\ep) = \iint_{\Om_\e \times \Om_\e}{ \phi_\e(x-y)(u^\e(x) - u^\e(y))^2 \rho^\e(x) \rho^\e(y)}\d x \d y \in L^1(0,\infty),\label{lt_est2}\\
\int_{\Om_\e}{ \rho^\e (u^\e)^2}\d x \in L^1(0,\infty).\label{lt_est3}
\end{gather}
We then claim that 
\bq\label{claim}
\frac{\d}{\d t}\int_{\Om_\e}{\rho^\e |u^\e + \pa_x \delta \calF_\e(\rho^\e)|^2} \d x ,\qquad \frac{\d}{\d t} \int_{\Om_\e}{ \rho^\e |u^\e|^2} \d x \in L^1(0,\infty).
\eq
For this, we revisit the estimates in the proof of Lemma \ref{lem:bd-estimate}. Note that 
\[%\begin{align}\label{lt_est4}
\begin{aligned}
	&\frac12\frac{\d}{\d t}\int_{\Om_\e}{\rho^\e |u^\e + \pa_x \delta \calF_\e(\rho^\e)|^2} \d x\cr
	&\quad =\frac12 \frac{\d}{\d t}\int_{\Om_\e}{\rho^\e |u^\e|^2} \d x + \frac12\frac{\d}{\d t}\int_{\Om_\e}{\rho^\e (\pa_x \delta \calF_\e(\rho^\e))^2} \d x + \frac{\d}{\d t}\int_{\Om_\e}{\rho^\e u^\e \pa_x \delta \calF_\e(\rho^\e)} \d x\cr
	&\quad =: I_1 + I_2 + I_3,
\end{aligned}
\]%\end{align}
where
\[\begin{aligned}
	I_1&= - \int_{\Om_\e}{\rho^\e u^\e \pa_x \delta \calF_\e(\rho^\e)} \d x - \int_{\Om_\e}{\mu_\e(\rho^\e) (\pa_x u^\e)^2} \d x \cr
	&\quad - \frac12  \iint_{\Om_\e \times \Om_\e}{ \phi_\e(x-y)(u^\e(x) - u^\e(y))^2 \rho^\e(x)\rho^\e(y)} \d x \d y  - \tau\int_{\Om_\e}{ \rho^\e (u^\e)^2}\d x
\end{aligned}\]
and thus
\[\begin{aligned}
	|I_1| &\leq \left(\frac12+\tau\right)\int_{\Om_\e}{ \rho^\e (u^\e)^2}\d x  + \frac12\int_{\Om_\e}{\rho^\e (\pa_x \delta \calF_\e(\rho^\e))^2} \d x + \int_{\Om_\e}{\mu_\e(\rho^\e) (\pa_x u^\e)^2} \d x \cr
	&\qquad+ \frac12 \iint_{\Om_\e \times \Om_\e}{ \phi_\e(x-y)(u^\e(x) - u^\e(y))^2 \rho^\e(x)\rho^\e(y)} \d x \d y \quad \in L^1(0,\infty),
\end{aligned}\]
due to \eqref{lt_est1}, \eqref{lt_est2}, and \eqref{lt_est3}. 

For the other terms, we recall from the proof of Lemma \ref{lem:bd-estimate} that 
\[\begin{aligned}
	I_2 + I_3 =  
 %&- \int_{\Om_\e} {\rho^\e (\pa_x \delta \calF_\e(\rho^\e))^2} \d x+ \int_{\Om_\e}{\rho^\e [(\phi_\e - \pa_{xx}W)*^\e, u^\e] \rho^\e \pa_x \delta \calF_\e(\rho^\e)}\d x \\ 
	& - \int_{\Om_\e}{ \rho^\e u^\e[\pa_{xx}W *^\e, u^\e] \rho^\e}\d x+ \int_{\Om_\e}{\mu_\e( \rho^\e)(\pa_x u^\e)^2} \d x   -\tau\int_{\Om_\e}{\rho^\e u^\e\px\delta\calF_\e(\rho^\e)}\d x\cr
 =  &  - \int_{\Om_\e} {\rho^\e (\pa_x \delta \calF_\e(\rho^\e))^2} \d x+ \int_{\Om_\e}{\rho^\e [(\phi_\e - \pa_{xx}\tw)*^\e, u^\e] \rho^\e \pa_x \delta \calF_\e(\rho^\e)}\d x \\ 
	&   - \int_{\Om_\e}{ \rho^\e u^\e[\pa_{xx}\tw *^\e, u^\e] \rho^\e}\d x+ \int_{\Om_\e}{\mu_\e( \rho^\e)(\pa_x u^\e)^2} \d x   -\tau\int_{\Om_\e}{\rho^\e u^\e\px\delta\calF_\e(\rho^\e)}\d x.
\end{aligned}\]
Thus, we deduce
\[\begin{aligned}
	|I_2 + I_3| 
 &\leq c \int_{\Om_\e}{ \rho^\e (\pa_x \delta \calF_\e(\rho^\e))^2}\d x \cr
 &\quad + c \|\phi_\e * \rho^\e\|_{L^\infty}\iint_{\Om_\e \times \Om_\e}{\phi_\e(x-y)(u^\e(x) - u^\e(y))^2 \rho^\e(x)\rho^\e(y)} \d y \d x\cr
	&\quad + c(1 + \|\phi_\e * \rho^\e\|_{L^\infty} + \|\phi * \rho^\e\|_{L^\infty} ) \int_{\Om_\e}{ \rho^\e |u^\e|^2}\d x + \int_{\Om_\e}{\mu_\e( \rho^\e)(\pa_x u^\e)^2}\d x
\end{aligned}\]
for some $c>0$ independent of $t$ and $\e$, where we used the estimate
\begin{align*}
\int_{\Om_\e}{ \rho^\e |[\pa_{xx}\tw *^\e, u^\e] \rho^\e}|^2\d x 
% &\qquad \leq \|\pa_{xx}\tw *^\e \rho^\e\|_{L^\infty}\iint_{\Om_\e \times \Om_\e} |\pa_{xx}\tw(x-y)||u^\e(y) - u^\e(x)|^2 \rho^\e(x) \rho^\e(y)\,\d x \d y  \cr
\leq c(1 + \|\phi *^\e \rho^\ep\|_{L^\infty})^2  \int_{\Om_\e}{ \rho^\e |u^\e|^2}\d x.
\end{align*}
Hence, we have also that $I_2+I_3 \in L^1(0,\infty)$. Combining all of the above estimates shows \eqref{claim}. On the other hand, by \eqref{lt_est1} and \eqref{lt_est3}, we obtain
\[
\int_{\Om_\e}{ \rho^\e |u^\e + \pa_x \delta \calF_\e(\rho^\e)|^2}\d x \in L^1(0,\infty),
\]
and subsequently, this, together with \eqref{lt_est3} and \eqref{claim}, yields
\[
\int_{\Om_\e}{ \rho^\e |u^\e + \pa_x \delta \calF_\e(\rho^\e)|^2} \d x,\ \int_{\Om_\e}{  \rho^\e |u^\e|^2}\d x \in W^{1,1}(0,\infty).
\]
This shows 
\[
\int_{\Om_\e}{ \rho^\e |u^\e + \pa_x \delta \calF_\e(\rho^\e)|^2}\d x \to 0, \qquad   \int_{\Om_\e}{  \rho^\e |u^\e|^2} \d x\to 0, 
\]
due to \cite{KR60}, and consequently also
\[
\int_{\Om_\e}{ \rho^\e ( \pa_x \delta \calF_\e(\rho^\e))^2}\d x \to 0\qquad\text{as $t\to\infty$}.
\]
Here, we would like to remark that the above estimates in $W^{1,1}(0,\infty)$ are independent of $\e>0$.

On the other hand, by using almost the same argument as in the proof of Lemma \ref{lem:free-energy-lsc}, we observe
\begin{align}
\intO{\rho\,( \pa_x \delta \calF(\rho))^2} &\leq  \liminf_{\e \to 0}\int_{\Om_\e}{ \rho^\e ( \pa_x \delta \calF_\e(\rho^\e))^2}\d x \nonumber%\label{lt_pot} \\
\intO{ \rho |u|^2} &\leq  \liminf_{\e \to 0} \int_{\Om_\e}{  \rho^\e |u^\e|^2} \d x \nonumber %\label{lt_kin}
\end{align}
where the right-hand sides tend to zero as $t \to \infty$. 

To get more information about the limits, we further assume that the interaction potential $W$ satisfies \eqref{add_con}. Then, by invoking the estimates in the proof of Corollary \ref{cor_bddr}, we deduce 
\[
\|\pa_x( \rho^{\gamma-\frac{1}{2}})(t) \|_{L^2} \leq C\quad \mbox{and} \quad \|\rho(t)\|_{L^\infty} \leq C
\]
for some $C>0$ independent of $t$. Moreover, we find
\[
\|\pa_x \rho^\gamma (t)\|_{L^2} \leq \frac{\gamma}{\gamma - \frac12} \|\rho(t)\|_{L^\infty}^\frac12 \|\pa_x( \rho^{\gamma-\frac{1}{2}})(t) \|_{L^2} \leq C.
\]
Thus, we obtain
\[
    \sup\nolimits_{t\in[0,\infty)}\|\rho^\gamma(t) \|_{H^1} \leq C
\]
from which we also find that there is a sequence $\{t_n\} \subset (0,\infty)$ with $t_n \to \infty$ as $n \to \infty$ such that 
\begin{align}\label{con_rhoga}
	\begin{aligned}
		\rho^\gamma(t_n) &\to  \nu  \quad \mbox{strongly in $L^2(\R)$ and a.e.,} \cr
		\pa_x (\rho^\gamma)(t_n) &\rightharpoonup \eta \quad \mbox{weakly in $L^2(\R)$}
	\end{aligned}
\end{align}
for some $ \nu  \in H^1(\R)$ and $\eta \in L^2(\R)$. Now we define $\rho_\infty$ by 
\[
\rho_\infty \coloneq  \nu ^\frac1\gamma.
\]
We first easily observe $\eta = \pa_x  \nu $ a.e. Indeed, for any $\varphi \in C^\infty_c(\R)$
\[
\intO{\varphi \, \eta }= \lim_{n \to \infty} \intO{\varphi \,\pa_x (\rho^\gamma)(t_n)} = - \lim_{n \to \infty} \intO{\pa_x \varphi \, \rho^\gamma(t_n)}  = - \intO{\pa_x \,\varphi  \nu  }.
\]
Now we claim that $\rho_\infty$ satisfies 
\[
\intO{\rho_\infty(x)\lr{\px\delta\calF(\rho_\infty(x))}^2}=0.
\]
For this, similarly as in the proof of Lemma \ref{lem:measure-conv}, it suffices to show 
\[
\bigl[\partial_x \rho^{\gamma}(t_n) + \rho(t_n)\,\partial_x W\ast \rho(t_n)\bigr]\d x = j_{t_n} \rightharpoonup^* j_\infty(\dx)\coloneq \bigl[\partial_x \rho_\infty^{\gamma} + \rho_\infty\,\partial_x W\ast \rho_\infty\bigr]\d x.
\]
Here due to \eqref{con_rhoga}, we readily see that
\[
\partial_x \rho^{\gamma}(t_n) \d x \rightharpoonup^* \partial_x \rho_\infty^{\gamma}  \d x.
\]
Since $\gamma > 1$, the convergence of $\rho^\gamma$ in \eqref{con_rhoga} implies $\rho(t_n) \to \rho_\infty$ in $L^1_{loc}(\R)$. More precisely, for any bounded set $A \subset \R$,
\[
\| \rho(t_n) - \rho_\infty\|_{L^1(A)} = \| (\rho(t_n)^\gamma)^\frac1\gamma -  \nu ^\frac1\gamma\|_{L^1(A)} \leq C(|A|, \gamma)(\|\rho^\gamma(t_n)\|_{L^2(A)} + \| \nu \|_{L^2(A)}).
\]
Together with the almost everywhere convergence $\rho(t_n) \to \rho_\infty$, we can then conclude the claim by the Lebesgue dominated convergence theorem. This yields
\[
\rho(t_n)\,\partial_x W\ast \rho(t_n) \d x \rightharpoonup^* \rho_\infty\,\partial_x W\ast \rho_\infty \d x.
\]
This completes the proof of Theorem~\ref{Th:2}.

\begin{remark}
    We notice from \eqref{x2rho} that the second moment of $\rho$ may not be uniformly bounded in time. Thus, the left-hand side of \eqref{est_potw} may not be uniformly bounded in time. The additional assumption \eqref{add_con} is made to guarantee that the estimate \eqref{est_potw} is independent of $t>0$, which consequently provides the uniform-in-time estimate required to deduce the convergences \eqref{con_rhoga}.
\end{remark}

 \appendix
 
 \section{Local existence and uniqueness of solutions to the approximate system}\label{AppendixA}
 
 In this appendix, we present the local-in-time existence and uniqueness of regular solutions to the approximate system. Let us first recall our approximate system:
 \begin{subequations}\label{app_eq_a}
	\begin{align}
		\pa_t \rho + \pa_x (\rho u) &= 0, \qquad (t,x) \in (0,T)\times\Om_\ep,\label{app_eq1_a}\\
		\pa_t (\rho u) + \pa_x (\rho u^2) &= - \rho \pa_x \delta\calF_\ep(\rho) + \pa_x (\mu_\ep(\rho)\pa_x u) + \rho [\phi_\ep *^\ep, u]\rho -\tau\rho u, \label{app_eq2_a}
	\end{align}
\end{subequations}
with the initial data 
\[
(\rho, u)(0,x) = (\rho_0^\e, u_0^\e)(x), \quad x \in \Om_\e,
\]
and the boundary condition
\[
u = 0, \quad x \in \partial\Om_\e.
\]
For simplicity, we omit the $\e$-dependence in the solutions $(\rho, u)$ in \eqref{app_eq_a}. 

Since the local existence theory for compressible Navier--Stokes system in the absence of vacuum is by now well established, we briefly sketch the idea of the proof for the local well-posedness theory for the system \eqref{app_eq_a}. We would like to remark that our strategy is similar to that of \cite{CDNP20}, where the existence theory for one-dimensional compressible fluid models with periodic boundary conditions is investigated. 

\bigskip

\paragraph{\bf Step I} We first construct a sequence of approximate solutions $\{(\rho_n, u_n)\}_{n\geq0}$ to the following system:
\begin{subequations}\label{ru_n}
\begin{align}
&\pa_t \rho_{n} + u_{n-1} \pa_x \rho_{n} = - \rho_{n-1} \pa_x u_{n-1}, \quad (t,x) \in \R_+ \times \Om_\e, \label{ru_n_1}\\
&\partial_t u_n - \frac{\mu_\e(\rho_{n})}{\rho_{n}} \partial_{x}^2 u_n = G(\rho_{n-1}, u_{n-1}) \label{ru_n_2}
\end{align}
\end{subequations}
subject to the initial data, first iteration step, and boundary condition:
\[
(\rho_n(0,x), u_n(0,x)) = (\rho_0^\e, u_0^\e)(x), \quad   n \in \mathbb{N}, \quad x \in \Om_\e,
\]
\[
(\rho_0(t,x), u_0(t,x)) = (\rho_0^\e, u_0^\e)(x), \quad (t,x)  \in \R_+ \times \Om_\e,
\]
and
\[
u_n(t,x)=0, \quad   n \in \mathbb{N}\cup\{0\}, \quad (t,x)  \in \R_+ \times \partial\Om_\e.
\]
Here $G(\rho_{n-1}, u_{n-1})$ is given by
\[
G(\rho_{n-1}, u_{n-1}) = u_{n-1} \partial_x u_{n-1} -  \partial_x \delta\calF_\e(\rho_{n-1}) + \frac{\partial_x \mu_\e(\rho_{n-1})}{\rho_{n-1}} \partial_x u_{n-1} +  [\phi_\e *^\e, u_{n-1}]\rho_{n-1}.
\]

We first solve \eqref{ru_n_1} with $n=1$ to get a solution $\rho_1$, and then solve \eqref{ru_n_2} with the obtained solution $\rho_1$. Due to the smoothness of initial data $(\rho_0^\e, u_0^\e)$ together with the lower bound condition on $\rho_0^\e$, we have the global-in-time existence and uniqueness of smooth solutions $\{(\rho_n, u_n)\}_{n\geq0}$ to the system \eqref{ru_n} by the classical existence theory for transport and parabolic equations.

\bigskip

\paragraph{\bf Step II} Next, we provide uniform-in-$n$ bound estimates of the approximate solutions in our desired solution space. Precisely, let us assume that $(\rho_n, u_n) \in  \mathcal{X}_2(T)\times \mathcal{Y}_2(T)$ as given in \eqref{def:spaces} for some $T > 0$ with 
\[
\|(\rho_n, u_n)\|_{\calX_2(T) \times \calY_2(T)} \leq M,
\]
where $M > 0$ depends only on the initial data $(\rho_0^\e, u_0^\e)$, $\e$, $T$, and $\underline{\rho}^\e$. Then by employing the characteristic method, we deduce
\[
\min_{(t,x) \in [0,T_1] \times \overline\Om_\e} \rho_{n+1}(t,x) \geq \frac12 \underline\rho^\e
\]
for some small $T_1 > 0$, which depends only on $M$ and $\underline{\rho}^\e$, and using this lower bound estimate, we can obtain
\[
\|(\rho_{n+1}, u_{n+1})\|_{\calX_2(T_0) \times \calY_2(T_0)} \leq M.
\]
for sufficiently small $T_0 \in (0, T_1]$. Since $T_1 > 0$ is independent of $n$, we conclude by an inductive argument that
\[
\sup_{n \in \mathbb{N}}\|(\rho_n, u_n)\|_{\calX_2(T_0) \times \calY_2(T_0)} \leq M.
\]
Here, we would like to point out that in \cite{CDNP20}, the uniform-in-$n$ estimates are only obtained for $H^k$, $k \geq 2$, norms of approximate solutions. However, in our case, the homogeneous boundary condition on the velocity imposes us to estimate the time derivatives of solutions as well.

\bigskip

\paragraph{\bf Step III} In order to pass $n \to \infty$, we show that the sequence $\{(\rho_n, u_n)\}_{n\geq0}$ forms a Cauchy sequence in $ {L^\infty(0,T_0; L^2(\Om_\e))} \times {L^\infty(0,T_0; L^2(\Om_\e))}$. Indeed, if we define 
\[
\Delta \rho_n \coloneq \rho_{n+1}- \rho_n, \qquad \Delta u_n \coloneq u_{n+1}- u_n,
\]
then $\Delta \rho_n$ and $\Delta u_n$ satisfy
\begin{align*}
&\pa_t \Delta \rho_{n} + u_{n-1} \pa_x \Delta \rho_{n} = - \Delta u_{n-1} \pa_{x} \rho_{n} -\Delta \rho_{n-1} \pa_x u_{n} + \rho_{n-1} \pa_{x} \Delta u_{n-1}, \\
&	\pa_t \Delta u_n - \frac{\mu_\e(\rho_{n})}{\rho_{n}} \pa_{x}^2 \Delta u_n = - \left( \frac{\mu_\e(\rho_{n})}{\rho_{n}} - \frac{\mu_\e(\rho_{n-1})}{\rho_{n-1}} \right) \pa_{x}^2  u_{n-1}+ G(\rho_n, u_n)-G(\rho_{n-1}, u_{n-1})
\end{align*}
with
\[
(\Delta \rho_n, \Delta u_n)(0,x) = 0 \quad \mbox{and} \quad  \Delta u_n = 0 \ (t,x) \in (0,T_0) \times \partial \Om_\e.
\]
Then, we  use the uniform bound estimates obtained in (Step II) to deduce
\[
\frac{d}{dt}\lt(\|\Delta \rho_n(t)\|_{L^2}^2 + \|\Delta u_n(t)\|_{L^2}^2 \rt) \leq C \lt(\|\Delta \rho_n(t)\|_{L^2}^2 + \|\Delta \rho_{n-1}(t)\|_{L^2}^2 + \|\Delta u_n(t)\|_{L^2}^2 + \|\Delta u_{n-1}(t)\|_{L^2}^2 \rt),
\]
for $t \in (0,T_0)$, where $C>0$ is independent of $n$, from which we conclude the existence of a pair $(\rho, u) \in {L^\infty(0,T_0; L^2(\Om_\e))} \times {L^\infty(0,T_0; L^2(\Om_\e))}$ such that 
\[
(\rho_n, u_n) \to (\rho, u) \quad \mbox{strongly in } {L^\infty(0,T_0; L^2(\Om_\e))} \times {L^\infty(0,T_0; L^2(\Om_\e))}.
\]
Moreover, our uniform-in-n upper bound estimates imply
\begin{align*}
\begin{gathered}
	 \rho_n \rightharpoonup^* \rho  \text{ weakly-$*$  in }  L^\infty(0,T_0;H^2(\Om_\e)), \quad  \pa_{t} \rho_n \rightharpoonup^* \pa_{t}\rho  \text{ weakly-$*$ in }  L^\infty(0,T_0;H^1(\Om_\e)), \\
	 \rho_n \rightarrow \rho  \text{ strongly in }  L^2((0,T_0) \times \Om_\e), \quad  u_n \rightharpoonup^* u  \text{ weakly-$*$  in }  L^\infty(0,T_0;H^2(\Om_\e)),  \\
	 u_n \rightharpoonup u  \text{ weakly in }  L^2(0,T_0;H^3(\Om_\e)), \quad \pa_{t}u_n \rightharpoonup \pa_{t}u  \text{ weakly in }  L^2(0,T_0;H^1(\Om_\e)),  \quad \mbox{and} \\
	 u_n \rightarrow u \text{ strongly in }  L^2(0,T_0;H^1(\Om_\e))\cap L^\infty(0,T_0;L^2(\Om_\e)).
\end{gathered}
\end{align*}
Thus, the limit pair $(\rho, u)$ belongs to $ \mathcal{X}_2(T_0)\times \mathcal{Y}_2(T_0)$, concluding the proof of the existence part. 

\bigskip

\paragraph{\bf Step IV} We finally adapt the Cauchy estimates in (Step III) to obtain the uniqueness of the constructed solutions.

%%%%%%%%%%%%%%%%%%%%%%%%%%%%%%%%%%%%%%%
%
%
%
%
%
%
%%%%%%%%%%%%%%%%%%%%%%%%%%%%%%%%%%%%%%%

\section{Proof of Proposition \ref{prop_convJ}: approximation of the initial data}\label{app:initial}

In this part, we provide the details of proof for Proposition \ref{prop_convJ}. We begin with some quantitative estimates on $R_\e$ defined as in \eqref{choiceR_ep} that will be crucially used for the convergence estimates on approximations of initial data:
\begin{equation}\label{conv_Re_0}
\ep^{\frac{3}{4 (\gamma-1)}} R_\ep^{\widetilde{\kappa}} = \lr{\ep^{\frac{3}{4 (\gamma-1)\widetilde{\kappa}}} R_\ep}^{\widetilde{\kappa}}\leq c\lr{\ep  |\log \ep |}^{\frac{3}{4 (\gamma-1)}} \rightarrow 0 \quad \mbox{as} \quad \e \to 0
\end{equation}
and 
\begin{equation}\label{conv_Re_1}
    \ep^{2\beta} \left(\ep \exp{(-R_\ep^2)}\right)^{\frac{2(\alpha-\gamma)}{\gmhalf}}\leq \ep^2 \exp{(|\log \ep|^{2\theta })} \leq \ep,
\end{equation}
where $\beta$ is appeared in \eqref{app_eq2}. Also, we have \eqref{ep-R-1}, and more precisely, $\ep^\beta R_\ep^{1-\alpha}\to 0$ as $\ep \rightarrow 0$, which is an important hypothesis in Lemma \ref{lem:free-energy-lsc}.
%\end{remark}

%%%%%%%%%%%%%%%%%%%%%%%%%%%%%%%%%%%%%%%
%
%
%
%
%
%
%%%%%%%%%%%%%%%%%%%%%%%%%%%%%%%%%%%%%%%

\subsubsection*{Approximation of the initial density}
Here we provide several convergences related to the initial density $\rho^\e_0$ in the lemma below.
\begin{lemma}\label{id-l1} 
Let $R_\e$ be given as in \eqref{choiceR_ep}. 
Then, we have the following estimates and convergences: 
   \begin{gather}
        \inf_{x \in \Omega_\e} \vr_0^\ep(x) \geq c \left(\ep \exp{(-R_\ep^2)}\right)^{\frac{1}{\gmhalf}}  >0  \label{lb}\\
		\vr_0^{\ep} \rightarrow \rho_0 \text{ strongly in }  L^1\cap L^\gamma(\R),\label{gam-1}\\
		|x|^{\kappa+2}\vr_0^{\ep}  \rightarrow |x|^{\kappa+2}\rho_0 \text{ strongly in } L^1(\R). \label{moment}
   \end{gather}
\end{lemma}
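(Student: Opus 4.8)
The plan is to handle the three claims of Lemma~\ref{id-l1} in order. Write $s\coloneq\gmhalf$, denote by $h_\ep\coloneq\rho_0^{s}\ast\eta_{\ep^{s}}$ the mollified power appearing in \eqref{id-d-1}, so that $\widetilde\rho_0^\ep=(h_\ep+\ep e^{-x^2})^{1/s}$ on $\overline\Omega_\ep$ and $\rho_0^\ep=\tfrac{\frkm}{Z_\ep}\mathbf 1_{\overline\Omega_\ep}\widetilde\rho_0^\ep$, where $Z_\ep$ is the normalising constant making $\int\rho_0^\ep=\frkm$. First I would record the preliminary fact $\rho_0^{s}\in L^1(\R)$: when $\gamma\ge\tfrac32$ this is immediate from $\rho_0^{s}\le\|\rho_0\|_{L^\infty}^{\,s-1}\rho_0$, and when $1<\gamma<\tfrac32$ it follows by splitting $\int\rho_0^s$ at $|x|=1$ and applying Hölder with exponents $1/s$, $1/(1-s)$ to $\rho_0^{s}=(|x|^{\kappa+2}\rho_0)^{s}|x|^{-(\kappa+2)s}$ on $\{|x|>1\}$, the required summability being guaranteed by $(\kappa+3)(\gamma-\tfrac12)>\tfrac32$. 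By standard mollification this gives $h_\ep\to\rho_0^{s}$ in $L^1(\R)$, hence $\widetilde\rho_0^\ep\to\rho_0$ in measure with the uniform bound $0\le\widetilde\rho_0^\ep\le C_0\coloneq(\|\rho_0\|_{L^\infty}^{s}+1)^{1/s}$. For the lower bound \eqref{lb} I would then simply note that on $\overline\Omega_\ep$ one has $h_\ep\ge0$ and $e^{-x^2}\ge e^{-R_\ep^2}$, so $\widetilde\rho_0^\ep\ge(\ep e^{-R_\ep^2})^{1/s}$ and therefore $\rho_0^\ep\ge\tfrac{\frkm}{Z_\ep}(\ep e^{-R_\ep^2})^{1/s}$ there; since $Z_\ep\to\frkm$ (established in the next step), \eqref{lb} holds with, say, $c=\tfrac12$, the finitely many remaining $\ep<e^{-1}$ being absorbed into $c$.

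For the convergences \eqref{gam-1} and \eqref{moment} I would use the Vitali convergence theorem: given the in-measure convergence and the bound $C_0$, it remains to establish uniform integrability of $\{\mathbf 1_{\overline\Omega_\ep}\widetilde\rho_0^\ep\}$ (resp.\ of $\{|x|^{\kappa+2}\mathbf 1_{\overline\Omega_\ep}\widetilde\rho_0^\ep\}$). On a fixed ball $\{|x|\le M\}$ this is automatic; outside I would split. The Gaussian term contributes $\int_{|x|>M}(\ep e^{-x^2})^{1/s}\le C\ep^{1/s}\to0$; and for $h_\ep$ one uses the far-field decoupling that for $|x|>M$ and $\ep^{s}<1$ the value $h_\ep(x)$ only feels $\rho_0^{s}\mathbf 1_{\{|z|>M-1\}}$, which transfers smallness of $\int_{|z|>M-1}\rho_0\,\d z$ (resp.\ of $\int_{|z|>M-1}|z|^{\kappa+2}\rho_0\,\d z$) across the convolution and the power. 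The delicate point is the contribution of the \emph{expanding} window $\{M<|x|<R_\ep\}$, where the trivial bound $C_0(R_\ep-M)$ is useless: here I would split $\rho_0$ by height at an $\ep$-dependent level, controlling the low part through $\|(\text{low})^{1/s}\|_{L^\infty}\cdot R_\ep$, which is a small power of $\ep$ times $R_\ep=a|\log\ep|^{\theta}$ and hence $\to0$, and the high part through its small $L^1$-mass together with the moment bound — this being exactly where the quantitative estimates \eqref{choiceR_ep}, \eqref{conv_Re_0}, \eqref{conv_Re_1} relating $R_\ep$ to the mollifier scale $\ep^{\gmhalf}$ enter. Granting this, $\mathbf 1_{\overline\Omega_\ep}\widetilde\rho_0^\ep\to\rho_0$ in $L^1(\R)$, so $Z_\ep\to\frkm$ (closing the lower bound) and $\rho_0^\ep\to\rho_0$ in $L^1(\R)$; since $\rho_0^\ep$ is uniformly bounded in $L^\infty$, the interpolation $\|\rho_0^\ep-\rho_0\|_{L^\gamma}^\gamma\le(2\sup_\ep\|\rho_0^\ep\|_{L^\infty})^{\gamma-1}\|\rho_0^\ep-\rho_0\|_{L^1}$ upgrades this to \eqref{gam-1}. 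For \eqref{moment} I would decompose
\[
\int_\R|x|^{\kappa+2}|\rho_0^\ep-\rho_0|\,\d x\le\tfrac{\frkm}{Z_\ep}\!\int_{\overline\Omega_\ep}\!|x|^{\kappa+2}|\widetilde\rho_0^\ep-\rho_0|\,\d x+\Big|\tfrac{\frkm}{Z_\ep}-1\Big|\!\int_\R\!|x|^{\kappa+2}\rho_0\,\d x+\!\int_{\R\setminus\overline\Omega_\ep}\!|x|^{\kappa+2}\rho_0\,\d x,
\]
the last two terms vanishing because $|x|^{\kappa+2}\rho_0\in L^1(\R)$ by \eqref{ini}, $\overline\Omega_\ep\uparrow\R$, and $Z_\ep\to\frkm$, and the first being handled by Vitali exactly as above with the weight carried along.

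The main obstacle, and the reason this lemma is not merely a routine mollification exercise, is that the regularisation in \eqref{id-d-1} acts on $\rho_0^{\gmhalf}$ and is then raised to the power $1/(\gmhalf)$: neither the mollification nor the truncation at $R_\ep$ commutes with this nonlinearity, and when $1<\gamma<\tfrac32$ the exponent exceeds $1$ and amplifies small values. One must therefore show that this amplification, integrated over the growing window $[-R_\ep,R_\ep]$, is absorbed by the fact that the mollifier width $\ep^{\gmhalf}$ tends to $0$ much faster than $R_\ep=a|\log\ep|^{\theta}$ grows — which is precisely the content of \eqref{choiceR_ep} and of the estimates \eqref{conv_Re_0}, \eqref{conv_Re_1}. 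Once the far-field decoupling and the height split are in place, the remaining computations are elementary.
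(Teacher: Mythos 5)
Your lower bound for \eqref{lb} and the overall Vitali scheme are reasonable, but there is a genuine gap at precisely the step you flag as delicate, and it is compounded by a mis-identification of where the difficulty lies. For $1<\gamma<\tfrac32$ the exponent $\frac{1}{\gmhalf}$ is $\ge 1$, so $t\mapsto t^{\frac{1}{\gmhalf}}$ is locally Lipschitz on $[0,\|\rho_0\|_{L^\infty}^{\gmhalf}+1]$ and one simply has $h_\ep^{\frac1{\gmhalf}}\le \|\rho_0\|_{L^\infty}^{1-(\gmhalf)}h_\ep$: small values are \emph{contracted}, not amplified, the window causes no trouble, and this is the paper's easy Case I (via \eqref{ipd-i1}). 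The dangerous regime is $\gamma>\tfrac32$, where $\frac1{\gmhalf}<1$ and the mollification output, spread thin over $\{M<|x|<R_\ep\}$, is amplified by the concave power. There your height split does not close: the low part is fine ($R_\ep$ times a power of $\ep$), but for the high part the only quantitative inputs you invoke are its small $L^1$-mass $\delta_M$ and the moment bound. With $p=\frac1{\gmhalf}<1$, mass and $L^\infty$ control alone yield at best
\begin{equation*}
\int_{\{M<|x|<R_\ep\}}\bigl(g^{\mathrm{high}}\ast\eta_{\ep^{\gmhalf}}\bigr)^{p}\dx\;\le\;\delta_M^{\,p}\,(2R_\ep)^{1-p},
\end{equation*}
(H\"older; a configuration of spikes at width comparable to the mollifier scale $\ep^{\gmhalf}$ saturates it up to constants), and for fixed $M$ this is not small uniformly in $\ep$, since $R_\ep\to\infty$. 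The moment bound gives no pointwise decay of $\rho_0$, so it does not repair this. The missing ingredient is a \emph{rate} for the mollification error, and that is exactly what the hypothesis $\partial_x(\rho_0^{\gmhalf})\in L^2(\R)$ from \eqref{ini} supplies — $\|\rho_0^{\gmhalf}\ast\eta_{\ep^{\gmhalf}}-\rho_0^{\gmhalf}\|_{L^2}\le C\,\ep^{\gmhalf}$ — a power of $\ep$ that beats any power of $R_\ep=a|\log\ep|^{\theta}$. Your proposal never uses this hypothesis, and without it (or an equivalent regularity input) the tightness claim over the growing window is not justified.

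This is exactly how the paper's proof proceeds in its Case II: using $|a^{1/p}-b^{1/p}|\le|a-b|^{1/p}$ and a threshold split at level $\delta$, it gets $\int_{\Omega_\ep}|\widetilde\rho_0^\ep-\rho_0|\dx\lesssim \delta^{\frac{\gmhalf}{\gamma-1}}R_\ep+\delta^{-2(\gmhalf)}\bigl(\ep^{2}+\ep^{2(\gmhalf)}\bigr)$, and the choice $\delta=\ep^{\frac{3}{4(\gmhalf)}}$ together with \eqref{conv_Re_0} (this is why $\theta$ in \eqref{choiceR_ep} is capped by $\tfrac{3}{4(\gamma-1)\widetilde\kappa}$) makes both terms vanish; the weighted statement \eqref{moment} is handled identically with $R_\ep^{\widetilde\kappa}$ in place of $R_\ep$. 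So your scheme is repairable, but only by injecting this $H^1$-based rate into the high-part estimate; as written the step fails. A secondary caution: in the weighted tails for $\gamma\le\tfrac32$ you should keep the weight inside the $(\gmhalf)$-power, as in the paper's commutation claim \eqref{claim_app}, because $|x|^{\kappa+2}\rho_0^{\gmhalf}$ need not be integrable under \eqref{ini} (e.g.\ $\rho_0\sim|x|^{-(\kappa+3)}\log^{-2}|x|$ with $\gamma$ close to $1$), whereas $(|x|^{\kappa+2}\rho_0)^{\gmhalf}\in L^{\frac1{\gmhalf}}(\R)$ always holds.
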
	
\begin{proof}
By definition of $\vr_0^\ep$, \eqref{lb} is clear. 

We observe that $\rho_0 \in L^p(\R) \subset L^1 \cap L^\infty(\R)$ for all $p \in [1,\infty]$, which together with \eqref{ini_app_app}, gives $ \vr_0^\gmhalf  \in H^1(\R)$. Thus, from the definition of $\tvr_0^{\ep}$, we deduce
\begin{align}\nonumber%\label{idp-1}
\begin{aligned}
& (\tvr_0^{\ep})^{\gamma-\frac{1}{2}} \rightarrow \vr_0^{\gamma-\frac{1}{2}} \text{ strongly in } L^p(\R) \text{ for all } p\geq 1 \text{ and } \cr
&\partial_x ((\tvr_0^{\ep})^{\gamma-\frac{1}{2}}) \rightarrow \partial_x(\vr_0^{\gamma-\frac{1}{2}}) \text{ strongly in } L^2(\R). 
\end{aligned}
\end{align}

Now we consider two cases:  ($\gmhalf\leq 1$) and  ($\gmhalf > 1$).

\medskip

\paragraph{\bf (Case I: $\gmhalf\leq 1$)} Note that 
	\begin{align}\label{ipd-i1}
		|a^p-b^p| \leq 2^p |a-b| |a^{p-1} +b^{p-1}| \text{ for }a,b>0  \text{ and }p\geq 1.
	\end{align}
This together with noticing $\frac{1}{\gamma - \frac12} \geq 1$ gives
\begin{align*}
&\vert \widetilde{\vr}_0^{\ep} -  \vr_0\vert \\
&\quad = \levert \lr{\rho_0^{\gmhalf} \ast \eta_{\ep^{\gmhalf}} +\ep e^{-x^2} }^{\frac{1}{\gmhalf}} - \vr_0 \rivert\\
&\quad \leq 2^{\frac{1}{\gamma - \frac12} } \levert \lr{\rho_0^{\gmhalf} \ast \eta_{\ep^{\gmhalf}} +\ep e^{-x^2} } - \vr_0^{\gmhalf} \rivert  \levert \lr{\rho_0^{\gmhalf} \ast \eta_{\ep^{\gmhalf}} +\ep e^{-x^2}}^{\frac{1}{\gmhalf}-1} + \vr_0^{\frac{1}{\gmhalf}-1} \rivert.
\end{align*}
Thus, we get for any $p\in[1,\infty]$,
\[
\Vert \widetilde{\vr}_0^{\ep} -  \vr_0\Vert_{L^p} \leq c\|\rho_0^{\gmhalf} \ast \eta_{\ep^{\gmhalf}} -  \vr_0^{\gmhalf}\|_{L^p} + c\e \to 0\qquad\text{as $\e \to 0$}.
\]
In particular, we obtain
\[
			\tvr_0^{\ep} \rightarrow \rho_0 \text{ in } L^1\cap L^\gamma(\R),
\]
and subsequently
 \begin{equation}\label{conv_z0}
Z_\e \to 1 \quad \mbox{as} \quad \e \to 0.
 \end{equation}
Then, by using the above convergences, we conclude that
\[
			\vr_0^{\ep} \rightarrow \rho_0 \text{ in } L^1\cap L^\gamma(\R) \quad \mbox{as} \quad \e \to 0.
\]

Next, we show the convergence \eqref{moment}. We first observe that 
 \begin{equation}\label{conv_app0}
 \ds \lr{|x|^{{(\kappa+2)}{(\gmhalf)}}\vr_0^{\gmhalf} } \ast \eta_{\ep^{\gmhalf}} \rightarrow |x|^{{(\kappa+2)}{(\gmhalf)}}\vr_0^{\gmhalf} \text{ strongly in } L^{\frac{1}{\gmhalf}}(\R)  \quad \mbox{as} \quad \e \to 0
 \end{equation}
 due to $\ds |x|^{\kappa+2}\vr_0 \in L^1(\R)$. We then claim that 
\begin{equation}\label{claim_app}
\lim_{\e \to 0}\ds \levertl \lr{|x|^{{(\kappa+2)}{(\gmhalf)}}\vr_0^{\gmhalf} } \ast \eta_{\ep^{\gmhalf}} -|x|^{{(\kappa+2)}{(\gmhalf)}} \lr{\vr_0^{\gmhalf}  \ast \eta_{\ep^{\gmhalf}} } \rivertl_{L^{\frac{1}{\gmhalf}}} = 0.
\end{equation}
Note that if \eqref{claim_app} holds, it follows from \eqref{conv_app0} that 
 \[
 \ds |x|^{{(\kappa+2)}{(\gmhalf)}} \lr{\vr_0^{\gmhalf}  \ast \eta_{\ep^{\gmhalf}} } \rightarrow |x|^{{(\kappa+2)}{(\gmhalf)}}\vr_0^{\gmhalf}  \mbox{ strongly in }   L^{\frac{1}{\gmhalf}}(\R) \quad \mbox{as} \quad \e \to 0.
 \] 
Furthermore, by using \eqref{ipd-i1} and the fact that 
\[
\ds \levertl |x|^{\kappa+2} \lr{\ep e^{-x^2} }^{\frac{1}{\gmhalf}} \rivertl_{L^1} \leq c\e^{\frac1{\gamma - \frac12}},
\]
we find
 \begin{align*}
|x|^{\kappa+2}\tvr_0^{\ep} \rightarrow |x|^{\kappa+2}\rho_0 \text{ in } L^1(\R).
\end{align*}
Then again we use \eqref{conv_z0} and monotone convergence theorem to have  \eqref{moment}.

\begin{proof}[Proof of Claim \eqref{claim_app}] Note that
\[
| |x-y|^p -|x|^p | \leq c \lr{|y|^p + |y| |x-y|^{p-1}} \quad \mbox{for $p > 1$}.
\]
Since $(\kappa + 2)(\gamma - \frac12) > 1$ and $\|\eta_{\e^{\gamma - \frac12}}\|_{L^1} = 1$, we estimate
\begin{align*}
&	\intO{\levert \lr{|x|^{{(\kappa+2)}{(\gmhalf)}}\vr_0^{\gmhalf} } \ast \eta_{\ep^{\gmhalf}} -|x|^{{(\kappa+2)}{(\gmhalf)}} \lr{\vr_0^{\gmhalf}  \ast \eta_{\ep^{\gmhalf}} } \rivert^{\frac{1}{\gmhalf}}}\\
&\quad \leq \intO{ \lr{\intOy{\levert \lr{|x-y|^{{(\kappa+2)}{(\gmhalf)}}-|x|^{{(\kappa+2)}{(\gmhalf)}}} \vr_0^{\gmhalf}(x-y)  \eta_{\ep^{\gmhalf}}(y)  \; \rivert}}^{\frac{1}{\gmhalf}}}\\
&\quad \leq c \int_\R \left( \int_\R \left( |y|^{(\kappa + 2)(\gamma - \frac12)} + |y| |x-y|^{(\kappa + 2)(\gamma - \frac12) - 1} \right) \rho^{\gamma - \frac12}_0(x-y) \eta_{\e^{\gamma - \frac12}}(y)\d y\right)^{\frac1{\gamma - \frac12}} \d x\cr
&\quad \leq c \iint_{\R  \times \R} \left( |y|^{(\kappa + 2) } + |y|^{\frac1{\gamma - \frac12}} |x-y|^{(\kappa + 2) - \frac1{\gamma - \frac12}} \right) \rho_0(x-y) \eta_{\e^{\gamma - \frac12}}(y)\d y  \d x\cr
&\quad =: I + II,
\end{align*}
due to Jensen's inequality. Here, 
\begin{align*}
|I| \leq c \e^{(\kappa + 2)(\gamma - \frac12)}\iint_{\R \times \R}   \rho_0(x-y) \eta_{\e^{\gamma - \frac12}}(y)\d y \d x = c \e^{(\kappa + 2)(\gamma - \frac12)}.
\end{align*}
For $II$, we split it into two terms:
\begin{align*}
|II| &\leq c \e \iint_{\R  \times \R}  |x-y|^{(\kappa + 2) - \frac1{\gamma - \frac12}} ({\bf 1}_{|x-y| \leq 1} + {\bf 1}_{|x-y| \geq 1})\rho_0(x-y) \eta_{\e^{\gamma - \frac12}}(y)\d y  \d x  =: II_1 + II_2.
\end{align*}
Note that $(\kappa + 2)(\gamma - \frac12) - 1 > 0$ and thus
\[
II_1 \leq c\e \iint_{\R \times \R}   \rho_0(x-y) \eta_{\e^{\gamma - \frac12}}(y)\d y \d x = c\e
\]
and
\[
II_2 \leq c\e \iint_{\R \times \R} |x-y|^{\kappa + 1}\rho_0(x-y)  \eta_{\e^{\gamma - \frac12}}(y) \d y  \d x = c\e \|\rho_0\|_{L^1_{\kappa + 1}}.
\]
Hence, by combining all of the above estimates, we conclude \eqref{claim_app}.
\end{proof}

\medskip

\paragraph{\bf (Case II: $\gmhalf > 1$)} Since 
\begin{align}\label{ipd-i2}
|a^{\frac{1}{p}}-b^{{\frac{1}{p}}}| \leq |a-b|^{\frac{1}{p}} \text{ for }a,b>0 \text{ and } p\geq 1,
\end{align} 
we obtain
\begin{equation}\label{idp-c2-1}
\vert \widetilde{\vr}_0^{\ep} -  \vr_0\vert = \levert \lr{\rho_0^{\gmhalf} \ast \eta_{\ep^{\gmhalf}} +\ep e^{-x^2} }^{\frac{1}{\gmhalf}} - \vr_0 \rivert \leq \levert \lr{\rho_0^{\gmhalf} \ast \eta_{\ep^{\gmhalf}} +\ep e^{-x^2} } - \vr_0^{\gmhalf} \rivert^{{\frac{1}{\gmhalf}} }. 
\end{equation}
This together with $\widetilde\rho^\e_0,\, \rho_0 \in L^\infty(\R)$ implies that for any $p \geq \gamma - \frac12$
\[
\Vert \widetilde{\vr}_0^{\ep} -  \vr_0\Vert_{L^p}  \rightarrow 0 \quad \text{as} \quad \ep \rightarrow 0.
\]
In particular, we get
\[
\tvr_0^{\ep} \rightarrow \rho_0 \text{ in } L^\gamma(\R).
\]

Next, we claim that
   \begin{align*}
      \lim_{\e \to 0}\Vert (\tvr_0^{\ep} -  \rho_0)\mathbf{1}_{{\overline{\Omega}_\ep}}\Vert_{L^1} =0.
   \end{align*}
For this, we use \eqref{idp-c2-1} to estimate
    			\begin{align*}
				\intM{|\widetilde{\vr}_0^{\ep} -  \vr_0| } & \leq  \intM{\levert \lr{\rho_0^{\gmhalf} \ast \eta_{\ep^{\gmhalf}} +\ep e^{-x^2} } - \vr_0^{\gmhalf} \rivert^{{\frac{1}{\gmhalf}} }}  \\
				&\leq \delta^{\frac{\gmhalf}{\gamma-1}} \intM{1 } +  \frac{c}{\delta^{2(\gmhalf)}} \intM{  \levert \lr{\rho_0^{\gmhalf} \ast \eta_{\ep^{\gmhalf}} +\ep e^{-x^2} } - \vr_0^{\gmhalf} \rivert^2 } \\
				& \leq 2\delta^{\frac{\gmhalf}{\gamma-1}} R_\ep + \frac{\ep^{2}}{\delta^{2(\gmhalf)} } + \frac{c}{\delta^{2(\gmhalf)}} \Vert \rho_0^{\gmhalf} \ast \eta_{\ep^{\gmhalf}}  - \vr_0^{\gmhalf} 
				\Vert_{L^2}^2,
			\end{align*}
where $\delta >0$ will be determined later. On the other hand, we observe
			\begin{align*}
				 \Vert \rho_0^{\gmhalf} \ast \eta_{\ep^{\gmhalf}}  - \vr_0^{\gmhalf} 
				\Vert_{L^2} \leq C \ep^{\gmhalf}  \Vert  \vr_0^{\gmhalf} 
				\Vert_{H^1},
			\end{align*}
and thus
			\begin{align*}
				\intM{\vert \widetilde{\vr}_0^{\ep} -  \vr_0\vert } 
				& \leq 2\delta^{\frac{\gmhalf}{\gamma-1}} R_\ep + \frac{\ep^2}{\delta^{2(\gmhalf)} } + \frac{c}{\delta^{2(\gmhalf)}} \ep^{2(\gmhalf)} \leq 2\delta^{\frac{\gmhalf}{\gamma-1}} R_\ep + \frac{c\ep^2}{\delta^{2(\gmhalf)} }
			\end{align*}
			due to $\e < 1$.
Then, we choose $\delta = \ep^{\frac{3}{4(\gmhalf)}} > 0$ to get from \eqref{conv_Re_0} that
			\begin{align*}
				\intM{\vert \widetilde{\vr}_0^{\ep} -  \vr_0\vert } 
				& \leq 2\ep^{\frac{3}{4 (\gamma-1)}} R_\ep + c \ep^{1/2} \leq 2\ep^{\frac{3}{4 (\gamma-1)}} R_\ep^{\tilde\kappa} + c \ep^{1/2} \to 0 \quad \mbox{as} \quad \e \to 0.
			\end{align*}
			On the other hand, due to 	the monotone convergence theorem, we find	
			\begin{align*}
				\rho_0\mathbf{1}_{\overline{\Omega}_\ep}  \rightarrow \rho_0 \text{ in } L^1(\R). 
			\end{align*}
Hence, the triangle inequality gives $ \tvr_0^{\ep}\mathbf{1}_{\overline{\Omega}_\ep} \rightarrow \rho_0 $ in $L^1(\R)$. In particular, this shows $Z_\e \to 1$ and subsequently, we have $\rho^\e_0 \to \rho_0$ in $L^1 \cap L^\gamma(\R)$ as $\e \to 0$.
			
For the convergence \eqref{moment}, similarly as before, by monotone convergence theorem, we first observe	
			\begin{align*}
				|x|^{2+\kappa} \rho_0\mathbf{1}_{{\overline{\Omega}_\ep}}  \rightarrow |x|^{2+\kappa} \rho_0 \text{ in } L^1(\R)  \quad \mbox{as} \quad \e \to 0.
			\end{align*}
Then, for any $\delta > 0$, we again use \eqref{idp-c2-1} to estimate			
			\begin{align*}
				&\intM{|x|^{2+\kappa} \vert \widetilde{\vr}_0^{\ep} -  \vr_0\vert } \\
				&\quad  \leq \intM{   |x|^{2+\kappa} \levert  \lr{\rho_0^{\gmhalf} \ast \eta_{\ep^{\gmhalf}} +\ep e^{-x^2} } - \vr_0^{\gmhalf} \rivert^{{\frac{1}{\gmhalf}} }   } \\
				&\quad \leq \delta^{\frac{\gmhalf}{\gamma-1}} \intM{|x|^{(2+\kappa) \frac{\gmhalf}{\gamma-1} }} + \frac{c}{\delta^{2(\gmhalf)}} \intM{  \levert \lr{\rho_0^{\gmhalf} \ast \eta_{\ep^{\gmhalf}} +\ep e^{-x^2} } - \vr_0^{\gmhalf} \rivert^2 } \\
				&\quad  \leq c\delta^{\frac{\gmhalf}{\gamma-1}} R_\ep^{(2+\kappa) \frac{\gmhalf}{\gamma-1} +1} + \frac{\ep^{2}}{\delta^{2(\gmhalf)} } + \frac{c}{\delta^{2(\gmhalf)}} \Vert \rho_0^{\gmhalf} \ast \eta_{\ep^{\gmhalf}}  - \vr_0^{\gmhalf} 
				\Vert_{L^2}^2\cr
				&\quad  \leq c\delta^{\frac{\gmhalf}{\gamma-1}} R_\ep^{(2+\kappa) \frac{\gmhalf}{\gamma-1} +1} + \frac{c\ep^{2}}{\delta^{2(\gmhalf)} }.
			\end{align*}
By choosing $\delta = \ep^{\frac{3}{4(\gmhalf)}}$, we have		
			\begin{align*}
				\intM{ |x|^{2+\kappa}\vert \widetilde{\vr}_0^{\ep} -  \vr_0\vert } 
				& \leq 2\ep^{\frac{3}{4 (\gamma-1)}} R_\ep^{\widetilde{\kappa}} + c \ep^{1/2} \to 0 \quad \mbox{as} \quad \e \to 0,
			\end{align*}
			where $\widetilde{\kappa} =(2+\kappa) \frac{\gmhalf}{\gamma-1} +1 $ due to \eqref{conv_Re_0}. Then again by using $Z_\e \to 1$ as $\e \to 0$, we conclude the convergence \eqref{moment}.
 \end{proof}

%%%%%%%%%%%%%%%%%%%%%%%%%%%%%%%%%%%%%%%%%%%%
%
%
%
%
%
%%%%%%%%%%%%%%%%%%%%%%%%%%%%%%%%%%%%%%%%%%%%

\subsubsection*{Approximation of initial velocity} 
In the following lemma, we show the convergence of kinetic energy and uniform-in-$\e$ bound estimate on the velocity moments.
\begin{lemma}\label{id-l2}
The following convergence and bound estimate hold:
\[
\lim_{\e \to 0}\int_\R \rho^\e_0 |u^\e_0|^2 \d x = \int_\R \rho_0 |u_0|^2 \d x \quad \mbox{and} \quad \limsup_{\e \to 0}\int_\R \rho^\e_0 |u^\e_0|^{2+\kappa} \d x \leq \int_\R \rho_0 |u_0|^{2+\kappa} \d x.
\]
\end{lemma}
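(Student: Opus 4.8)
The plan is to reduce both assertions to the $L^{2+\kappa}$–convergence \eqref{app_tw0} of $\tilde w_0^\e$, to the pointwise convergences $\xi_\e\to1$ (valid since $R_\e\to\infty$) and $\rho_0^\e\to\rho_0$ a.e.\ (a by-product of Lemma~\ref{id-l1}, or directly of the mollification in \eqref{id-d-1}), and to the normalisation $\|\rho_0^\e\|_{L^1}=1$ coming from the choice of $Z_\e$. The starting point is that $\xi_\e$ is supported inside $\Om_\e$, where $\rho_0^\e>0$ by \eqref{lb}, so that the definition \eqref{id-v-1} yields the pointwise identities
\[
\rho_0^\e|u_0^\e|^{2+\kappa}=|\tilde w_0^\e|^{2+\kappa}\,\xi_\e^{2+\kappa},\qquad
\rho_0^\e|u_0^\e|^{2}=(\rho_0^\e)^{\frac{\kappa}{2+\kappa}}\,|\tilde w_0^\e|^{2}\,\xi_\e^{2},
\]
together with the defining relations $|\tilde w_0|^{2+\kappa}=\rho_0|u_0|^{2+\kappa}$ and $\rho_0^{\frac{\kappa}{2+\kappa}}|\tilde w_0|^{2}=\rho_0|u_0|^{2}$.

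For the $\limsup$–inequality I would simply use $0\le\xi_\e\le1$ to get $\int_\R\rho_0^\e|u_0^\e|^{2+\kappa}\le\int_\R|\tilde w_0^\e|^{2+\kappa}$ and let $\e\to0$ via \eqref{app_tw0}. For the limit identity I would split
\[
\int_\R\rho_0^\e|u_0^\e|^{2}-\int_\R\rho_0|u_0|^{2}
=\int_\R(\rho_0^\e)^{\frac{\kappa}{2+\kappa}}\xi_\e^{2}\bigl(|\tilde w_0^\e|^{2}-|\tilde w_0|^{2}\bigr)
+\int_\R|\tilde w_0|^{2}\Bigl((\rho_0^\e)^{\frac{\kappa}{2+\kappa}}\xi_\e^{2}-\rho_0^{\frac{\kappa}{2+\kappa}}\Bigr)
\]
and treat the two terms with the conjugate exponents $\tfrac{2+\kappa}{\kappa}$ and $\tfrac{2+\kappa}{2}$. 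For the first term, Hölder bounds it by $\|(\rho_0^\e)^{\frac{\kappa}{2+\kappa}}\|_{L^{(2+\kappa)/\kappa}}\,\||\tilde w_0^\e|^{2}-|\tilde w_0|^{2}\|_{L^{(2+\kappa)/2}}=\|\rho_0^\e\|_{L^1}^{\kappa/(2+\kappa)}\,\|(\tilde w_0^\e-\tilde w_0)(\tilde w_0^\e+\tilde w_0)\|_{L^{(2+\kappa)/2}}$, which is $\le\|\tilde w_0^\e-\tilde w_0\|_{L^{2+\kappa}}\|\tilde w_0^\e+\tilde w_0\|_{L^{2+\kappa}}\to0$. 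For the second term I would note that $(\rho_0^\e)^{\frac{\kappa}{2+\kappa}}\xi_\e^{2}$ is bounded in $L^{(2+\kappa)/\kappa}$ (with norm $\|\rho_0^\e\|_{L^1}^{\kappa/(2+\kappa)}=1$) and converges a.e.\ to $\rho_0^{\frac{\kappa}{2+\kappa}}$, hence converges to it weakly in $L^{(2+\kappa)/\kappa}$; since $|\tilde w_0|^{2}\in L^{(2+\kappa)/2}=\bigl(L^{(2+\kappa)/\kappa}\bigr)^{*}$, the pairing converges and this term vanishes.

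I do not expect a genuine obstacle here; the argument is essentially bookkeeping built on \eqref{app_tw0} and $\|\rho_0^\e\|_{L^1}=1$. The points that deserve a little care are: (i) the negative power $(\rho_0^\e)^{-1/(2+\kappa)}$ hidden in $u_0^\e$ is harmless, because it is always accompanied by enough positive powers of $\rho_0^\e$ and because $\xi_\e$ confines the support to $\Om_\e$, where $\rho_0^\e>0$; (ii) one cannot argue more cheaply by dominated convergence on $w_0^\e=\sqrt{\rho_0^\e}u_0^\e=(\rho_0^\e)^{\frac12-\frac1{2+\kappa}}\tilde w_0^\e\xi_\e\to\sqrt{\rho_0}u_0$ in $L^2$, because $\tilde w_0$ is only known to lie in $L^{2+\kappa}$ and not in $L^2$, which is precisely why the splitting must be organised through the $L^{(2+\kappa)/\kappa}$–$L^{(2+\kappa)/2}$ duality above; and (iii) the second statement is only a $\limsup$ rather than a limit because the cut-off factor $\xi_\e^{2+\kappa}\le1$ yields an inequality in only one direction.
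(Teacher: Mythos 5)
Your proof is correct, and the $\limsup$ part coincides with the paper's (bound $\xi_\ep^{2+\kappa}\le 1$, then pass to the limit via \eqref{app_tw0}). For the kinetic-energy identity, the paper takes a slightly different and more direct route: it proves the \emph{stronger} statement that $w_0^\ep=\sqrt{\rho_0^\ep}\,u_0^\ep\to w_0=\sqrt{\rho_0}\,u_0$ strongly in $L^2(\R)$. The triangle/H\"older split is organised as you do (with $\tfrac1p=\tfrac12-\tfrac1{2+\kappa}$), but where you handle the density-difference piece by a weak-$L^{(2+\kappa)/\kappa}$ argument ($(\rho_0^\ep)^{\kappa/(2+\kappa)}\xi_\ep^2$ bounded in $L^{(2+\kappa)/\kappa}$ and converging a.e., hence weakly, paired against $|\tilde w_0|^2\in L^{(2+\kappa)/2}$), the paper instead invokes the elementary inequality $|a^{1/p}-b^{1/p}|\le|a-b|^{1/p}$ (their \eqref{ipd-i2}) to upgrade $\rho_0^\ep\to\rho_0$ in $L^1$ from Lemma~\ref{id-l1} to strong convergence $(\rho_0^\ep)^{1/p}\to\rho_0^{1/p}$ in $L^p$. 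The paper's variant is a touch more elementary and yields strong $L^2$ convergence of $w_0^\ep$ as a byproduct, whereas yours establishes only the convergence of $\|w_0^\ep\|_{L^2}$ — which is all the lemma asks. One small point worth making explicit in your write-up: Lemma~\ref{id-l1} gives $\rho_0^\ep\to\rho_0$ in $L^1$, hence a.e.\ only along subsequences; since the final assertion is the convergence of a sequence of real numbers, the usual ``every subsequence has a further subsequence'' argument closes this gap, but it should be stated.
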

\begin{proof} 
By using the triangle inequality, H\"older's inequality, and \eqref{ipd-i2}, we obtain
    \begin{align*}
        \Vert w_0^\ep - w_0 \Vert_{L^2} &\leq  \Vert {(\rho_0^\ep)}^{\frac{1}{2}-\frac{1}{2+\kappa}} \lr{\tilde{w}_0^\ep -\tilde{w}_0} \Vert_{L^2} +  \|({(\rho_0^\ep)}^{\frac{1}{2}-\frac{1}{2+\kappa}}-{(\rho_0)}^{\frac{1}{2}-\frac{1}{2+\kappa}})\tilde{w}_0 \Vert_{L^2} \\
        &\leq \Vert {(\rho_0^\ep)}^{\frac1p} \Vert_{L^p}\Vert \tilde{w}_0^\ep -\tilde{w}_0 \Vert_{L^{2+k}}+ \Vert {(\rho_0^\ep)}^{\frac1p}-{(\rho_0)}^{\frac1p} \Vert_{L^p}\Vert \tilde{w}_0  \Vert_{L^{2+\kappa}}\cr
        &\leq \|\rho^\e_0\|_{L^1}^\frac1p \Vert \tilde{w}_0^\ep -\tilde{w}_0 \Vert_{L^{2+k}} + \|\rho^\e_0 - \rho_0\|_{L^1} \Vert \tilde{w}_0  \Vert_{L^{2+\kappa}}\
    \end{align*} 
    where $ \frac{1}{p}= \frac{1}{2}-\frac{1}{2+\kappa}$. This together with Lemma \ref{id-l1} and \eqref{app_tw0} yields the convergence of initial kinetic energy.
    
For the uniform bound estimate on the velocity moments, by definition of $u^\e_0$, we easily find
\[
\int_\R \rho^\e_0 |u^\e_0|^{2+\kappa} \d x \leq \int_\R |\tilde w^\e_0|^{2+\kappa} \d x.
\]
Thus, we get from \eqref{app_tw0} that
\[
\limsup_{\e \to 0}\int_\R \rho^\e_0 |u^\e_0|^{2+\kappa} \d x \leq \int_\R \rho_0 |u_0|^{2+\kappa} \d x.
\]
This completes the proof.
\end{proof}

%%%%%%%%%%%%%%%%%%%%%%%%%%%%%%%%%%%%%%%%%%%%
%
%
%
%
%
%%%%%%%%%%%%%%%%%%%%%%%%%%%%%%%%%%%%%%%%%%%%

We are now in a position to give the details of proof for Proposition \ref{prop_convJ}.

\begin{proof}[Proof of Proposition \ref{prop_convJ}] 
As a direct consequence of Lemma \ref{id-l2}, we get
\[
\limsup_{\e \to 0}\int_\R \rho^\e_0 |u^\e_0|^{2+\kappa} \d x\leq \int_\R \rho_0 |u_0|^{2+\kappa} \d x.
\]
For the convergence of  $\calJ_{\tau,\lambda,\e}$, by definition of that, it suffices to show that 
\begin{equation}\label{conv_kin_free}
\int_\R \rho^\e_0 |u^\e_0|^2 \d x \to \int_\R \rho_0 |u_0|^2 \d x, \quad  \calF_\e(\rho^\e_0) \to \calF(\rho_0),  
\end{equation}
and
\begin{equation}\label{conv_kin_free2}
\intM{  \rho_0^\ep |u^\ep_0 + \pa_x \delta \calF_\ep(\rho_0^\ep)|^2} \to \intM{  \rho_0 |u_0 + \pa_x \delta \calF(\rho_0)|^2}
\end{equation}
as $\e \to 0$. Note that the convergences \eqref{conv_kin_free} can be directly obtained from the results of Lemmas \ref{id-l1} and \ref{id-l2}. For \eqref{conv_kin_free2}, we notice that 
\begin{align*}
    &\frac12\intM{  \rho_0^\ep |u^\ep_0 + \pa_x \delta \calF_\ep(\rho_0^\ep)|^2} \\
    &\quad = \frac{1}{2} \intM{ \rho_0^\ep |u^\ep_0 |^2 }+ \intM{ \rho_0^\ep u^\ep_0 \lr{\pa_x \delta \calF_\ep(\rho_0^\ep)} } + \frac{1}{2} \intM{ \rho_0^\ep |\pa_x \delta \calF_\ep(\rho_0^\ep) |^2},
\end{align*}
where
\begin{align*}
    \frac{1}{2}\intM{ \rho_0^\ep |\pa_x \delta \calF_\ep(\rho_0^\ep) |^2}&=  \frac{1}{2}\intM{ \rho_0^\ep |\pa_x \varphi(\rho_0^\ep ) |^2} + \frac{\ep^\beta \alpha}{\alpha-1} \intM{ \rho_0^\ep (\pa_x \varphi(\rho_0^\ep )) (\pa_x (\vr_0^\ep)^{\alpha-1} ) }\\ 
    & \quad + \ep^{2\beta} \frac12 \lr{\frac{\alpha}{\alpha-1}}^2  \intM{ \rho_0^\ep |\pa_x (\vr_0^\ep)^{\alpha-1}  |^2 }.
\end{align*}
Here we use \eqref{lb} to estimate the third term on the right-hand side of the above equality as 
\begin{align*}
    \ep^{2\beta} \intM{ \vr_0^\ep |\pa_x (\vr_0^\ep)^{\alpha-1}  |^2 }& = c_\alpha Z_\ep^{1 - 2\gamma } \ep^{2\beta}  \intM{ (\vr_0^\ep)^{2(\alpha-\gamma)} |\pa_x (\tvr_0^\ep)^{\gamma-\frac12}  |^2 }\cr
    & \leq c_\alpha  \ep^{2\beta}\left(\ep \exp{(-R_\ep^2)}\right)^{\frac{2(\alpha-\gamma)}{\gmhalf}} \to 0 \quad \mbox{as} \quad  \e \to 0,
\end{align*}
due to \eqref{conv_Re_1} and $Z_\e \to 1$ as $\e \to 0$. This together with the convergence results of Lemmas \ref{id-l1} and \ref{id-l2} concludes the desired result.
\end{proof}

%%%%%%%%%%%%%%%%%%%%%%%%%%%%%%%%%%%%%%%%%%%%%%%%%%
%
%
%
% \section*{Acknowledgement}
%
%
%%%%%%%%%%%%%%%%%%%%%%%%%%%%%%%%%%%%%%%%%%%%%%%%%%

\section*{Acknowledgement}
The research of YPC was supported by NRF grant no.\ 2022R1A2C1002820. The research of NC and EZ  was supported by the EPSRC Early Career Fellowship  EP/V000586/1. Also, the work of NC was partly supported by the ``Excellence Initiative Research University (IDUB)" program at the University of Warsaw. OT acknowledges support from NWO grant OCENW.M.21.012.

% % ----------------------------------------------------------------
%\bibliographystyle{amsplain}

%% ----------------------------------------------------------------

\end{document}